\theoremstyle{plain}
\newtheorem{theorem}{Theorem}[section]
\newtheorem{lemma}{Lemma}[section]
\newtheorem{proposition}{Proposition}[section]
\theoremstyle{definition}
\newtheorem{definition}{Definition}
\newtheorem{conjecture}{Conjecture}
\newtheorem{problem}{Problem}
\newtheorem{remark}{Remark}[section]
\numberwithin{equation}{section}
\newcommand{\bbE}{{\mathbb{E}}}
\newcommand{\bbD}{{\mathbb{D}}}
\newcommand{\bbR}{{\mathbb{R}}}
\newcommand{\bbZ}{{\mathbb{Z}}}
\newcommand{\bbC}{{\mathbb{C}}}
\newcommand{\bbT}{{\mathbb{T}}}
\newcommand{\cA}{{\mathcal{A}}}
\newcommand{\cD}{{\mathcal{D}}}
\newcommand{\cE}{{\mathcal{E}}}
\newcommand{\cF}{{\mathcal{F}}}
\newcommand{\cI}{{\mathcal{I}}}
\newcommand{\cN}{{\mathcal{N}}}
\newcommand{\cM}{{\mathcal{M}}}
\newcommand{\cP}{{\mathcal{P}}}
\newcommand{\cR}{{\mathcal{R}}}
\newcommand{\cZ}{{\mathcal{Z}}}
\newcommand{\cX}{{\mathcal{X}}}
\newcommand{\cY}{{\mathcal{Y}}}
\newcommand{\cW}{{\mathcal{W}}}
\newcommand{\cV}{{\mathcal{V}}}
\newcommand{\fX}{{\mathfrak{X}}}
\newcommand{\fj}{{\mathfrak{j}}}
\newcommand{\fm}{{\mathfrak{m}}}
\newcommand{\fp}{{\mathfrak{p}}}
\newcommand{\fw}{{\mathfrak{w}}}
\newcommand{\G}{\Gamma}
\newcommand{\e}{{\varepsilon}}
\newcommand{\vt}{\vartheta}
\newcommand{\vpi}{\varpi}
\newcommand{\z}{\zeta}
\newcommand{\g}{\gamma}
\newcommand{\Up}{\Upsilon}
\renewcommand{\l}{\lambda}
\newcommand{\pd}{{\partial}}
\newcommand{\clos}{\text{\rm clos}\,}
\newcommand{\oc}{\overset{\circ}}
\renewcommand{\Re}{\text{\rm Re}\,}
\renewcommand{\Im}{\text{\rm Im}\,}
	\def\MR#1{}
\title{Ahlfors problem for polynomials}
\author[Benjamin~Eichinger]{B.~Eichinger}
\address{Institute for Analysis, Johannes Kepler University of Linz,
	4040 Linz, Austria}
\email{benjamin.eichinger@jku.at}
\author[Peter~Yuditskii]{P.~ Yuditskii}
\address{
	Abteilung f\"ur Dynamische Systeme,
	und Approximationstheorie,
	Johannes Kepler Universit\"at Linz, 4040 Linz, Austria}
\email{petro.yudytskiy@jku.at}
\thanks{Both authors were supported by the Austrian Science Fund FWF, project no: P25591-N25.}
\date{\today}
\subjclass[2010]{Primary 30C10, 30E15, 41A50; Secondary 14K20, 30C85, 30F10, 46E22}
\keywords{Chebyshev polynomials, analytic capacity, hyperelliptic Riemann surface, Abel-Jacobi inversion, complex Green and Martin functions, reproducing kernels, character automorphic Hardy spaces.}
\begin{document}

\begin{abstract}
We raise a conjecture that asymptotics for Chebyshev polynomials in a complex domain can be given in terms of  the reproducing kernels of a suitable Hilbert space of analytic functions in this domain. It is based on two classical results due to Garabedian and Widom. To support this conjecture we study asymptotics for Ahlfors extremal polynomials in the complement to a system of intervals on $\bbR$, arcs on $\bbT$, and its continuous counterpart. 

Bibliography: 34~titles.
\end{abstract}

\maketitle

\section{Introduction}
\label{s1}
Starting from works of Chebyshev, Markov, Bernstein, Akhiezer, Widom, ... many explicit asymptotics for the best uniform approximation were found and became classical, see e.g., the book \cite{Akh53}, especially addendum in it, see also a survey paper \cite{SY92}. As soon as a problem deals with approximation on the real axis
we have results in a really very broad range, including, say, approximations with 
varying weights \cite{LL01}, see also \cite{Su66}, or approximations of quite exotic functions, e.g. \cite{ErY11}, which by the way, has applications in computational mathematics \cite{ZY16}.
Most likely, we owe it to  the Chebyshev alternation theorem, which  reveals  completely the structure of generalized polynomials of minimal deviation from zero on the real axis.  

Despite some new results of the highest level, e.g. \cite{Tot12,KNT16},
in the \textit{complex plane} not much is known even in the most classical setting. We do not have asymptotics for \textit{Chebyshev polynomials}
in finitely connected  domains bounded by smooth  arcs, (or even in simply connected domains, for instance, in the complement of a spiral curve).

The main goal of this paper is to give some reasons to support the following hypothesis. 

\begin{conjecture}\label{mainconj}
	Asymptotics for Chebyshev polynomials in a complex domain $\cD$ can be given in terms of  the reproducing kernels of a suitable Hilbert space of analytic functions in $\cD$.
\end{conjecture}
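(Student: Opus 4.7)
The plan is to establish the conjecture in the three concrete settings announced in the abstract (intervals on $\bbR$, arcs on $\bbT$, and a continuous analogue) and to extract from these the common mechanism, rather than to attack an arbitrary $\cD$. The common structure is that each such $\cD$ admits a Fuchsian uniformization $\pi:\bbD\to\cD$ with deck group $\G$, and a natural hyperelliptic double cover $\cR$ of $\cD$. The candidate Hilbert space is the character automorphic Hardy space $H^2(\G,\alpha)$ with reproducing kernel $k^{\alpha}(z,\zeta)$ indexed by a character $\alpha$ of $\G$. The conjecture is then unpacked as a precise statement: the asymptotics of $T_n$ (or of the Ahlfors extremal polynomial of degree $n$) are given by $k^{\alpha_n}(z,\zeta_0)$ at a distinguished base point $\zeta_0$, along the character sequence $\alpha_n$ generated by the lift of the divisor at infinity.

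First I would lift the extremal problem to the cover: a degree $n$ polynomial $P_n$ pulls back to a character automorphic function $F_n$ on $\bbD$ whose character $\alpha_n$ equals $n$ times the character produced by the complex Green's function of $\cD$ with pole at $\infty$. A Garabedian-type duality, in the form used by Widom, then reformulates the Ahlfors problem as the computation of $\|k^{\alpha_n}\|$ in $H^2(\G,\alpha_n)$ twisted by a suitable inner factor.

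Next I would split the asymptotics into an exponential envelope and a bounded term. The exponential envelope is the classical Widom factor $\mathrm{cap}(\cD)^{-n}\exp(nG_{\cD}(\cdot,\infty))$, dictated by the logarithmic capacity and the Green's function. The bounded term should, under the proposed conjecture, coincide with the reproducing kernel $k^{\alpha_n}$ of the character automorphic Hardy space, and, via Abel--Jacobi inversion on the Jacobian of $\cR$, admit an explicit representation in terms of Riemann theta functions on the Jacobian. For the model cases listed in the abstract this representation is in principle accessible, since the genus and homology are under control.

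The main obstacle is to promote this identification from an envelope to a \emph{sharp} asymptotic. One must produce an inner--outer factorization of $F_n$ on $\bbD$ adapted to the deck group $\G$ and argue that its inner part is precisely what the reproducing kernel detects; in the complex plane the Chebyshev alternation theorem is unavailable, so nothing forces this factorization automatically. Moreover, the sequence of characters $\alpha_n$ traverses the compact character variety along an Abel--Jacobi trajectory that is generically dense and almost never periodic, so one expects the resulting asymptotic to be almost periodic rather than convergent. Controlling this equidistribution, and showing that the reproducing kernel evaluation along $\alpha_n$ yields an honest (almost periodic) asymptotic rather than merely an oscillating two-sided bound, is where the essential analytic work is concentrated, and this is also where the Martin function of $\cD$, the regularity of $\partial\cD$, and the Direct Cauchy Theorem condition on the uniformization are expected to enter decisively.
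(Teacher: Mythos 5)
The statement you are addressing is a \emph{conjecture}: the paper does not prove it, it only supports it with solved special cases (Ahlfors extremal polynomials/entire functions on real sets, evaluated at real points, Theorem \ref{thrmain}; the simply connected case, Theorem \ref{thintro}; and a partial complex result, Theorem \ref{cth42}). Your text is likewise not a proof but a programme, and you concede this yourself when you isolate the ``main obstacle'' (no alternation theorem off the real axis, control of the almost periodic character trajectory) without resolving it. That obstacle is precisely where the paper's actual work lies: for real $z_0$ it is overcome via the Chebyshev alternation theorem and comb mappings, and for complex $z_0$ via the Kolmogorov criterion (Lemma \ref{4l22}, Theorem \ref{4th25}), and even then only for $z_0$ in a neighbourhood $\cV_2$ of the real axis, with the extension to all of $\cD$ resting on analytic continuation of kernels \emph{contingent on the conjecture being true} (Remark \ref{30rem2}). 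So the decisive step is missing from your proposal.

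More seriously, the specific identification you propose is contradicted by the paper's findings. You predict that, after stripping the exponential (Green/Martin) envelope, the bounded factor ``coincides with the reproducing kernel $k^{\alpha_n}$'' of the character automorphic Hardy space and is computable ``via Abel--Jacobi inversion \dots\ in terms of Riemann theta functions.'' Two corrections: (i) the limit value $\Up(\l,\beta)=Y(z(\l),\beta)\left|\frac{dz}{d\l}\right|$ is \emph{not} collinear to the Szeg\"o kernel as soon as $\Im z(\l)\neq0$ (Theorem \ref{cth42}, Remark \ref{30rem2}; already visible in the explicit simply connected formula \eqref{eq5}, where the kernel only agrees with $\frac12 k_{Sz}$ on $\bbR_+$); whether the complex-point limit carries any reproducing kernel structure at all is exactly the open problem recorded in \eqref{hstr}. (ii) The inversion problem governing the divisor is not the classical Abel--Jacobi inversion on the Jacobian: on the real axis it is already the modified problem of Proposition \ref{propgj1} (characters enter through $\sum\omega(x_j,E_k)$, with a $2^g$-fold half-period ambiguity), and at complex points it is the further generalized Problem \ref{gaji}, whose extra equation \eqref{cs2} (equivalently the harmonic-measure balance \eqref{hxs}) couples the divisor to $z_0$ and has no counterpart in your theta-function scheme; it is moreover only locally solvable in general, with non-uniqueness already for $g=1$. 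So even as a programme, your route identifies the wrong limit object off the real axis, and a proof built on that identification would fail.
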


The main motivation for this conjecture was given by the following two classical results:

\begin{itemize}
	\item[1.] Widom proved that, if the boundary $\pd\cD$ consists of finitely many (disjoint) smooth Jordan curves,  then asymptotics for the Chebyshev polynomials are represented in  terms of $H^\infty$-extremal functions in $\cD$ \cite[Theorem 8.3.]{Wid69}.
	\item[2.]  Garabedian, in his turn, already  in 1949, expressed certain extremal properties  of uniformly bounded in $\cD$ analytic functions, which form $H^\infty_{\cD}$, in terms of reproducing kernels \cite{Gar49}.
\end{itemize}

Actually, Garabedian called his paper ``Schwarz's lemma and the Szeg\"o kernel''. 
By the Ahlfors problem we mean the following task.
\begin{problem}\label{pA}
	Let $H^\infty_\cD$ be a collection of uniformly bounded analytic functions in a domain $\cD$. 
	Find
	\begin{equation*}
		\cA_\cD(z)=\sup\{|w'(z)|:\ \|w\|_{H^\infty_\cD}\le 1,\ w(z)=0\}, \quad z\in \cD.
	\end{equation*}
\end{problem}

Recall that for a compact $E=\pd\cD $ the proper defined value $A_{\cD}(\infty)$ is called the analytic capacity of $E$. Due to Garabedian,
$A_\cD(z)=k_{Sz}(z,z;\cD)$,  where $k_{Sz}(z,z_0;\cD)$ is the Szeg\"o reproducing kernel  in $\cD$.
This is, indeed, a generalization of the classical Schwarz lemma: if $\cD$ is the right half-plane, $\Re\l >0$, then
$$
\cA_{\cD}(\l_0)=\left(\frac{\l-\l_0}{\l+\bar \l_0}\right)'_{\l=\l_0}=\frac 1{\lambda_0+\bar \lambda_0}=k_{Sz}(\l_0,\l_0;\cD).
$$

Having in mind the
conformal invariance of our conjecture,  we will study simul\-taneously the following three problems, naturally related to the Ahlfors problem.

\begin{problem}\label{pJ} Let $E_J$ be a real compact consisting of $g+1$ non-degenerated intervals, 
	$E_J=[b_0,a_0]\setminus\cup_{j=1}^g(a_j,b_j)$. Let $\cP_n(E_J)$ be the collection of polynomials of degree $n$ bounded in absolute value by 1 on $E_J$. Define
	\begin{equation*}
		A_n(z;E_J)=\sup\{|P'(z)|:\ P\in \cP_n(E_J),\ P(z)=0\}, \quad z\in \bbC\setminus E_J.
	\end{equation*}
	Find asymptotics for $A_n(z;E_J)$ as $n\to\infty$.
\end{problem}

\begin{problem}\label{pT} Let $E_T$ be a system of arcs, 
	$E_T=\bbT\setminus\{e^{iz}: z\in\cup_{j=1}^g(a_j,b_j)\}$. 
	Let $\cP_n(E_T)$ be a collection of polynomials of degree $n$ bounded in absolute value by 1 on $E_T$. Define
	\begin{equation*}
		A_n(\z;E_T)=\sup\{|P'(\zeta)|:\ P\in \cP_n(E_T),\ P(\z)=0\}, \quad \z=e^{iz}\in \bbC\setminus E_T.
	\end{equation*}
	Find asymptotics for $A_n(\z;E_T)$ as $n\to\infty$.
\end{problem}

\begin{problem}\label{pS} Let
	$E_S=\bbR_+\setminus\cup_{j=1}^g(a_j,b_j)$. Let $\cE_\ell(E_S)$ be the collection of entire functions $F(z)$ of order $1/2$, of exponential type at most $\ell$ and bounded in absolute value by 1 on $E_S$, that is,
	$$
	|F(z)|\le C(\ell') e^{\ell'\sqrt{|z|}}, \ \forall\ \ell'>\ell , \quad |F(z)|\le 1\ \text{for} \ z\in E_S.
	$$ 
	Define
	\begin{equation*}
		A_\ell(z;E_S)=\sup\{|F'(z)|:\ F\in \cE_\ell(E_S),\ F(z)=0\}, \quad z\in \bbC\setminus E_S.
	\end{equation*}
	Find asymptotics for $A_\ell(z;E_S)$ as $\ell\to\infty$.
\end{problem}

We point out that, in fact, the mentioned Widom Theorem 8.3. \cite{Wid69} required extremal properties of \textit{multivalued} $H^\infty$-functions in $\cD$ (but with a single-valued absolute value). That is, to rewrite his result in terms of reproducing kernels one has to slightly generalize Problem \ref{pA}, for the exact setting see Problem \ref{pgA}, as well as to find a counterpart of Garabedian's theorem, which was done later by Abrahamse \cite{Ab79}. All these, including a proper definition of the Szeg\"o kernels are given in the preliminary section.
Now, let us formulate our solution of Problems \ref{pJ}, \ref{pT}, and \ref{pS} for simply connected domains (it seems, this  is already quite essential).

\begin{theorem}\label{thintro}
	Let $E_S=\bbR_+$, that is, $\cD=\{z=-\l^2:\ \Re\l>0 \}$. Then
	\begin{equation}\label{eq5}
		\Upsilon(\lambda):=\lim_{\ell\to\infty}e^{-\ell\Re\lambda}A_\ell(-\lambda^2;\bbR_+)2|\lambda|=
		\frac{1}{\l+\bar\l}\frac{2\sqrt{\l}\overline{\sqrt{\l}}}{(\sqrt{\l}+\overline{\sqrt{\l}})^2},
	\end{equation}
	as well as
	
	$$
	\Up(\l)=\lim_{n\to\infty}\left|\frac{\lambda-1}{\lambda+1}\right|^n A_n(z;[-2,2])\left|\frac{dz}{d\l}\right|=
	\lim_{n\to\infty}\left|\frac{\lambda-\l_0}{\lambda+\bar\l_0}\right|^n A_n(\zeta; E_T)\left|\frac{d\z}{d\l}\right|,
	$$
	where 
	$$
	z=2\frac{\l^2+1}{\l^2-1},\ \Re\l>0, \ \z=\frac{z-i y_0}{z+i y_0},\ \ y_0\in\bbR_+,
	$$
	and
	$$
	\lambda_0^2=\frac{iy_0-2}{i y_0+2}\ (\Re\l_0>0),\quad
	E_T=\left\{\z=\frac{z-i y_0}{z+iy_0}:\ z\in[-2,2]\right\}.
	$$
\end{theorem}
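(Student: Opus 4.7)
The plan is to verify all three limits in parallel by lifting each Ahlfors problem to the common half-plane uniformisation in the variable $\l$ with $\Re\l>0$, and recognising the three normalising exponentials as the Martin / complex Green weights of the respective domains. For Problem~\ref{pS}, the substitution $w(\l)=F(-\l^2)$ identifies $\cE_\ell(\bbR_+)$ with even entire functions of exponential type $\le\ell$ bounded by $1$ on $i\bbR$, so that $F'(z_0)=-w'(\l_0)/(2\l_0)$ with $\l_0=\sqrt{-z_0}$. Setting $g(\l)=e^{-\ell\l}w(\l)$ and applying Phragm\'en--Lindel\"of on the right half-plane, the problem becomes $\sup|g'(\l_0)|$ over $g\in H^\infty(\{\Re\l>0\})$ of norm $\le 1$ with $g(\l_0)=0$ and the boundary phase relation $g(-it)=e^{2i\ell t}g(it)$ (which transcribes the evenness of $w$). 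This is an Ahlfors problem in a character-automorphic Hardy space.

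The upper bound would come from the Abrahamse extension of Garabedian's theorem: the supremum is a quotient of reproducing kernels of $H^2$-subspaces of the half-plane indexed by the character. The unconstrained Blaschke piece yields the factor $1/(\l+\bar\l)$, and the remaining factor is evaluated in the Paley--Wiener realisation, in which the character condition reduces to the symmetry $\hat g(s)=\hat g(2\ell-s)$ on $[0,2\ell]$; an edge/stationary-phase analysis as $\ell\to\infty$ then produces the universal factor $\frac{2\sqrt\l\overline{\sqrt\l}}{(\sqrt\l+\overline{\sqrt\l})^2}$. The matching lower bound would come from an explicit near-extremal $w_\ell(\l)=\cosh(\ell\l+\phi_\ell(\l))$, with an odd entire phase $\phi_\ell$ chosen so that $\Re\phi_\ell(it)=0$ and $\ell\l_0+\phi_\ell(\l_0)\in i\pi(\bbZ+\tfrac12)$---the natural Chebyshev analogue of $\cosh(\ell\l)$ adapted to the prescribed zero.

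The polynomial limits in (b) and (c) follow by conformal transfer. The map $\l\mapsto z=2(\l^2+1)/(\l^2-1)$ conformally uniformises $\bbC\setminus[-2,2]$ by the right half-plane, under which $\log|(\l+1)/(\l-1)|$ is the Green function with pole at $\infty$ and $T_n(z/2)$ lifts to $\tfrac12\bigl((\tfrac{\l+1}{\l-1})^n+(\tfrac{\l-1}{\l+1})^n\bigr)$, so after the stated normalisation the extremal polynomial takes the same Chebyshev--Blaschke form as in (a), and the same character-automorphic Szeg\"o calculation reproduces $\Up(\l)$. The arc case follows from the further M\"obius change $\z=(z-iy_0)/(z+iy_0)$, which rebrands $[-2,2]$ as $E_T$ and moves the preimage of $\infty$ in the $\l$-variable to $\l_0$ with $\l_0^2=(iy_0-2)/(iy_0+2)$; conformal invariance of both the Szeg\"o kernel and the Martin function then gives again $\Up(\l)$. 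The main obstacle is the asymptotic evaluation of the character-automorphic Szeg\"o kernel as $\ell, n\to\infty$: the upper bound is cheap, but the lower bound requires engineering the phase $\phi_\ell$---essentially a Riemann--Hilbert problem on $i\bbR$---and this is where the $\sqrt\l$-factor in $\Up(\l)$ appears, revealing the limit as a natural reproducing kernel on the two-sheeted cover, exactly as Conjecture~\ref{mainconj} predicts.
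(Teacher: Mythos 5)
Your reduction is sound as far as it goes: the substitution $w(\l)=F(-\l^2)$, the Phragm\'en--Lindel\"of bound $|w(\l)|\le e^{\ell|\Re\l|}$, the identification of the exponential normalisations with Martin/Green weights, and the M\"obius bookkeeping relating the $[-2,2]$ and $E_T$ cases (with $\l_0$ the preimage of the pole) all agree with the paper's setup. But the argument stops exactly where the theorem begins. The relation $g(-it)=e^{2i\ell t}g(it)$ is \emph{not} character automorphy: $\bbC\setminus\bbR_+$ is simply connected, the relevant group is trivial, and this condition couples boundary values at reflected points rather than expressing invariance under a Fuchsian group; hence neither Garabedian's nor Abrahamse's theorem applies to it, the claimed ``quotient of reproducing kernels'' upper bound is unsupported, and the ``Paley--Wiener symmetry plus edge/stationary-phase'' step --- the only place where the factor $\frac{2\sqrt\l\,\overline{\sqrt\l}}{(\sqrt\l+\overline{\sqrt\l})^2}$ could emerge --- is asserted, not carried out. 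In the paper that factor has a concrete origin which your outline never reaches: by the Kolmogorov criterion the extremal for non-real $z_0$ has the two-parameter form $\frac{z-z_0}{z-\bar z_0}\bigl(\rho\Phi_n(z)+i\Psi_n(z)\bigr)$ built from an $n$-regular comb (Lemma \ref{4l22}, Theorem \ref{4th25}); the limit of this family is computed in Lemma \ref{lemma27}; the prescribed zero forces the balance condition \eqref{zzero}, $U_{\fX}(z_0)/\sqrt{T(z_0)}\in i\bbR$, whose unique solution for $E_S=\bbR_+$ is $x_0=-|z_0|$ (Proposition \ref{pr43}); and then \eqref{limita} gives $Y(z_0)=\frac{1}{2\Im z_0}e^{-G(z_0,-|z_0|)}$, which after the Jacobian $2|\l|$ is precisely the right-hand side of \eqref{eq5}. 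No analogue of this identification of the extra node $x_0$, or of the equilibrium condition, appears in your plan.

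The lower-bound ansatz $w_\ell=\cosh(\ell\l+\phi_\ell)$ is also inadequate for complex $\l_0$. With $\phi_\ell$ odd, real on $\bbR$ and purely imaginary on $i\bbR$, the zeros of $w_\ell$ lie on the level set $\Re(\ell\l+\phi_\ell)=0$; forcing this set through a fixed $\l_0$ with $\Re\l_0>0$ requires $\Re\phi_\ell(\l_0)=-\ell\Re\l_0$, i.e.\ $\phi_\ell$ of size $\ell$ at $\l_0$, while the exponential type of $\cosh(\ell\l+\phi_\ell)$ must stay at most $\ell$ --- you give no construction achieving this, and simple choices (linear or fixed-type phases) fail. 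More fundamentally, competitors of pure cosine shape correspond to the degenerate value of the paper's parameter $\rho$, i.e.\ to the real-$z_0$ case of Theorem \ref{thrmain}; for $\Im z_0\ne0$ the extremal requires $\rho^2<\tilde\rho^2$ strictly and is not unimodular on the alternation set, so your family misses the degree of freedom needed to be asymptotically sharp, and no matching of its derivative at $\l_0$ with \eqref{eq5} is attempted. Until the structure of near-extremals for complex $z_0$ is identified and the resulting inversion problem (Problem \ref{gaji}) is solved --- the actual content of the paper's Section 4 and of Theorem \ref{cth42} --- the proposal establishes neither the upper nor the lower bound in \eqref{eq5}, and the two polynomial limits, which you obtain only by conformal transfer from \eqref{eq5}, remain unproven as well.
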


\begin{remark}
	Two remarks concerning Theorem \ref{thintro}:
	\begin{itemize}
		\item[(i)] \textit{Universality}. While the first exponential term in the asymptotics depends on the setting of the problem, 
		the second term $\Up(\l)$ is a conformally invariant value. Note that generally in Problems \ref{pJ}, \ref{pT}, \ref{pS} all three domains $\cD=\bbC\setminus E$, where $E=E_J, E_T, E_S$, (with a suitable choice of parameters) are conformally equivalent. In the same time they have certain features, in particular, they are related to different,  quite famous in the spectral theory, classes of operators: the so-called finite gap Jacobi, CMV matrices and 1-D Schr\"odinger operators. Recently we added to this family GMP matrices \cite{Y15}, which are related to approximation by rational functions with a prescribed system of poles. There is no doubt that after a suitable choice of the exponential factor the limit of the minimal deviation in such a rational approximation would lead to the same function $\Up(\l)$, see \cite{E16}, where such universality was demonstrated for the Chebyshev extremal problem. 
		\item[(ii)] \textit{Hilbert space structure}. $\log$-subharmonicity is a general property of upper envelopes of families of analytic functions \cite[Lecture 7]{Lev}, see also \cite{E16}. This explains why  the following matrix formed by partial derivatives should be nonnegative
		$$
		\begin{bmatrix}
		\Up(\l)&\pd \Up(\l)\\
		\bar\pd \Up(\l)&\bar\pd\pd \Up(\l)
		\end{bmatrix}\ge 0.
		$$
		But the limit value $\Up(\l)$ represents the diagonal of a certain \textit{reproducing kernel}
		$$
		k(\l,\l_0):=\frac{1}{\l+\bar\l_0}\frac{2\sqrt{\l}\overline{\sqrt{\l_0}}}{(\sqrt{\l}+\overline{\sqrt{\l_0}})^2},
		\quad\Re \l>0,\ \Re\l_0>0.
		$$
		Thus, for some reason we have an infinite number of inequalities of this sort: all of the following matrices are nonnegative
		\begin{equation}\label{hstr}
			[\bar\pd^m\pd^n \Up(\l)]_{n,m=0}^{N}\ge 0 \quad \text{for all}\ N\in\bbZ_+.
		\end{equation}
		We cannot comment appearance of this structure in the given context and leave this as an open problem.
		We point out that $k(\l,\l)$ is collinear to the Szeg\"o kernel only on the real axis, $\Up(\l)=\frac 1 2 k_{Sz}(\l,\l)$, $\l\in\bbR_+$.
		
	\end{itemize}
\end{remark}

Now we outline the structure of the paper and comment its other results.

The preliminary section contains statements, which are known at least on a folklore  level. To work with multivalued functions in a multi-connected domain $\cD$, we prefer to use a universal covering,
$\cD\simeq\bbC_+/\G$, where $\Gamma$ is a discrete subgroup of $SL_2(\bbR)$. We introduce multivalued \textit{complex} Green and Martin functions, define their characters and make a connection with conformal mappings on so-called comb-domains. 

The prime form is a standard object in the algebraic approach to the theory of reproducing kernels on Riemann surfaces \cite[Chap. II]{Fay}, \cite[Chap. IIIb]{MTata2}. The language of Hilbert spaces of automorphic forms 
$A_1^2(\Gamma,\alpha)$  is probably much easier for specialists in analysis. In this way we define the Szeg\"o kernel $k^\alpha_{Sz}(\l,\l_0)$, Definition \ref{defszz}. In fact, it is not that much important whether one works with Hardy spaces of character automorphic functions or forms.  But the relation between the character $\beta\in\G^*$ in the character automorphic Ahlfors Problem \ref{pgA} and its solution, Theorem \ref{th28}, looks particularly simple in the second version, $\alpha^2=\beta$. Note that the extremal character $\alpha$ here is defined up to a half period $\fj$, $\fj^2=1_{\G^*}$.  We demonstrate that Garabedian's case of the trivial character, $\beta=1_{\G^*}$, in which the extremal character does not depend on $\lambda_0$,  Theorem \ref{th211}, is an exception. In fact, in general the half period varies with $\lambda_0$, $\fj=\fj(\l_0)$.

An  interrelation of the Ahlfors and Abel-Jacobi 
inversion problems was noted in \cite{Gar49}. While the theory for orthogonal polynomials \cite{AkhCongr}, or the related to it spectral theory for finite gap Jacobi matrices leads to the classical  Abel-Jacobi inversion problem  in the proper sense \cite{MTata2}, the character automorphic $H^\infty$-extremal problem requires  a certain modification, Proposition \ref{propgj1}. We clarify this in the last preliminary subsection \ref{ss23}.

Our asymptotic relation for $A_n(z;E_J)$ for a real $z$, Theorem \ref{thrmain}, is  the Widom 
Theorem 11.5 \cite{Wid69} with  a specific weight, see also  \cite{ChrSiZi15}. But asymptotics $A_n(\z;E_T)$, $\z\in \bbT$, would require  a certain varying  weight in such a  reduction.  Instead, in this symmetric case we solve all  Problems \ref{pJ}, \ref{pT}, and \ref{pS}  in a unified way: using Chebyshev alternation theorem we represent extremal functions in terms of correspon\-ding comb functions, after that we use a simple relation \eqref{eq38} between Martin/Green functions of the given domains and its $\ell/n$-regular restriction. Thus, in this case Conjecture \ref{mainconj} gets its confirmation with the Szeg\"o character automorphic reproducing kernel.

Finally, using the Kolmogorov theorem, we show how to move $z_0$ in the complex plane. We get a reduction of the extremal problem to a generalization of the modified Abel-Jacobi problem, see Problem \ref{gaji}.
As it was already  mentioned, the value $\Up(z_0,\beta)$, which is responsible for the asymptotics, is universal, but  \textit{is  not collinear to the Szeg\"o reproducing kernel} as soon as $\Im z_0\not=0$, Theorem \ref{cth42},
see also Remark \ref{30rem2}.

\section{Preliminaries}
\subsection{Comb-domains and elements of potential theory}

The following comb-domains are standard objects in the spectral theory of reflectionless operators
\cite{MarOst75}, see also \cite{ErY12, DY16}.
Let, see Fig. \ref{comb},
\begin{align*}
	\Pi_J= &\{\vt=\xi+i\eta: 0<\xi<\pi,\ \eta>0\}\setminus \cup_{j=1}^g\{\vt=\omega_j+i\eta,\eta\in(0,h_j]\},
	\\
	\Pi_T=&\bbC_+\setminus\cup_{j=0}^g\cup_{m\in\bbZ}\{\vt=\omega_j+2\pi m+i\eta, \eta\in (0,h_j]\},
	\\
	\Pi_S=&\{\vt=\xi+i\eta: \xi>0,\ \eta>0\}\setminus \cup_{j=1}^g\{\vt=\omega_j+i\eta,\eta\in(0,h_j]\}.
\end{align*}
In the first case $\omega_j\in(0,\pi)$ in the second one $\omega_0=0$ and $\omega_j\in(0,2\pi)$ for $j=1,..., g$.
We map conformally $\bbC_+$ on one of the corresponding combs making normalizations
\begin{align}
	\tau_J(b_0)=0,\quad \tau_J(a_0)=\pi, \quad \tau_J(\infty)=\infty, \label{eq24}
	\\
	\tau_T(iy)\simeq iy, \ y\to\infty,\quad \tau_T(0)=0,\label{eq25}
	\\
	\tau_S(-x)\simeq i\sqrt{x},\ x\to\infty, \quad \tau_S(0)=0.\label{eq26}
\end{align}
Note that in the second case we get automatically $\tau_T(z+2\pi)=\tau(z)+2\pi$ \cite{DY16}, that is, $e^{i\tau_T}$ is well defined as a function of $\z=e^{iz}\in\bbD$. In the first and third case we get the gaps $(a_j,b_j)$ as preimages of the corresponding vertical slits, $j=1,...,g$. In the same way, in the second case we get a system of arcs $\{\z=e^{iz}: z\in(a_j,b_j)\}_{j=0}^g$, which form the complement of $E_T$.

\begin{figure}[htbp] 
	\begin{center}
		\includegraphics[scale=0.6]
		{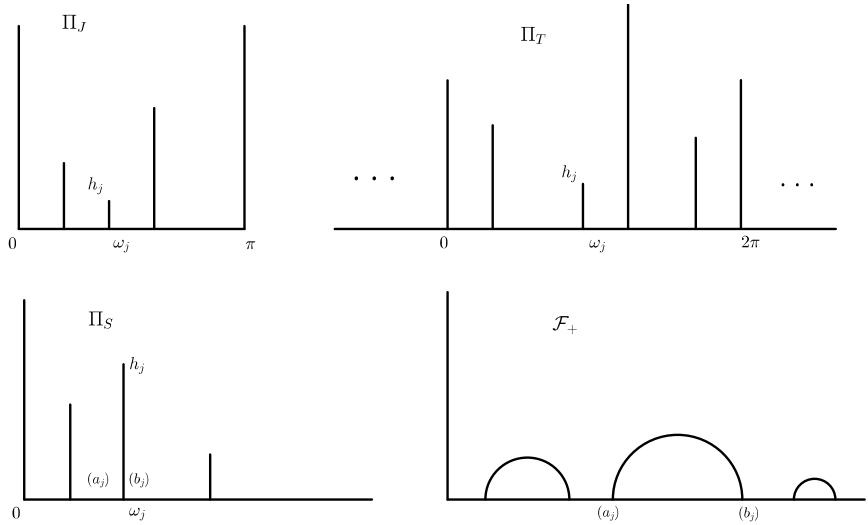}
		\caption{Comb-domains and the uniformization plane}
		\label{comb}
	\end{center}
\end{figure}

If $\tau$ denotes one of the maps \eqref{eq24}--\eqref{eq26},
then $\Im \tau(z)$ can be extended through the gaps to a single valued harmonic function in $\cD$. Moreover,
\begin{equation}\label{eq270}
	\Im \tau_J(z)=G(z,\infty),\quad \Im\tau_T(z)={G(e^{iz},0)+G(e^{iz},\infty)},\quad \Im\tau_S(z)=M(z),
\end{equation}
where $G(z,z_0)$ denotes the Green function w.r.t. $z_0$ in the corresponding domain and $M(z)$ stands for the Martin function w.r.t. infinity. The function $e^{i\tau(z)}$ can be extended to $\cD$ by the symmetry principle as a \textit{multivalued} function. Such functions become single-valued on a universal covering.

Let us fix $\cD$ in the form $\bbC\setminus E_S$. Recall that for an arbitrary $\bar\bbC\setminus E_T$ or 
$\bar\bbC\setminus E_J$ we always can find a suitable conformally equivalent domain $\cD$ of the above form. A universal covering $z=z(\lambda)$, $\l\in\bbC_+$, also corresponds to a conformal mapping. For a given $E_S$ there exists a system of half discs $\bbD_j^+$ such that the conformal mapping 
$$
\bbC_+\to\cF_+=\{\lambda=\xi+i\eta: \xi>0,\eta>0\}\setminus \cup_{j=1}^g\bbD^+_j,\quad \lambda(-x)\simeq i\sqrt{x}, \
x\to\infty,
$$
transforms the negative half axis into the imaginary half axis in the $\lambda$-plane and the gaps $(a_j,b_j)$ into the boundary of the half discs $\pd \bbD_j^+$, see Fig. \ref{comb}. The inverse map $z=z(\lambda)$ can be extended by a system of reflections to the whole upper half-plane. Indeed, let $\gamma_j$, acting in the $\lambda$-plane corresponds to the double reflection w.r.t. the negative half axis and the gap $(a_j,b_j)$ in the $z$-plane. This is a  linear fractional transform, which maps $\pd \bbD_j^+$ on
$-\overline{\pd \bbD_j^+}$ and we have $z(\g_j(\l))=z(\l)$. In this case, the system $\{\g_j\}_{j=1}^g$ represents a generator of the Fuchsian group $\Gamma$ and $\cF=\cF_+\cup\overline{-\cF_+}\cup{i\bbR_+}$ is a fundamental  domain for the action of $\Gamma$ on $\bbC_+$ such that $ \bbC_+/\Gamma\simeq \bbC\setminus E_S$
(respectively to $\bar\bbC\setminus E_J$ or $\bar\bbC\setminus E_T$).

Conformal mappings on comb-domains are partial cases of Schwarz-Christoffel transformations. Due to the classical formula, say,
\begin{equation*}
	\tau_J(z)=i\int_{b_0}^z\prod_{j=1}^g\frac{z-c_j}{\sqrt{(z-a_j)(z-b_j)}}\frac{dz}{\sqrt{(z-a_0)(z-b_0)}},
\end{equation*}
where $c_j\in(a_j,b_j)$ corresponds to the top $\omega_j+ih_j$ of the slit.

\begin{figure}[htbp] 
	\begin{center}
		\includegraphics[scale=0.17]
		{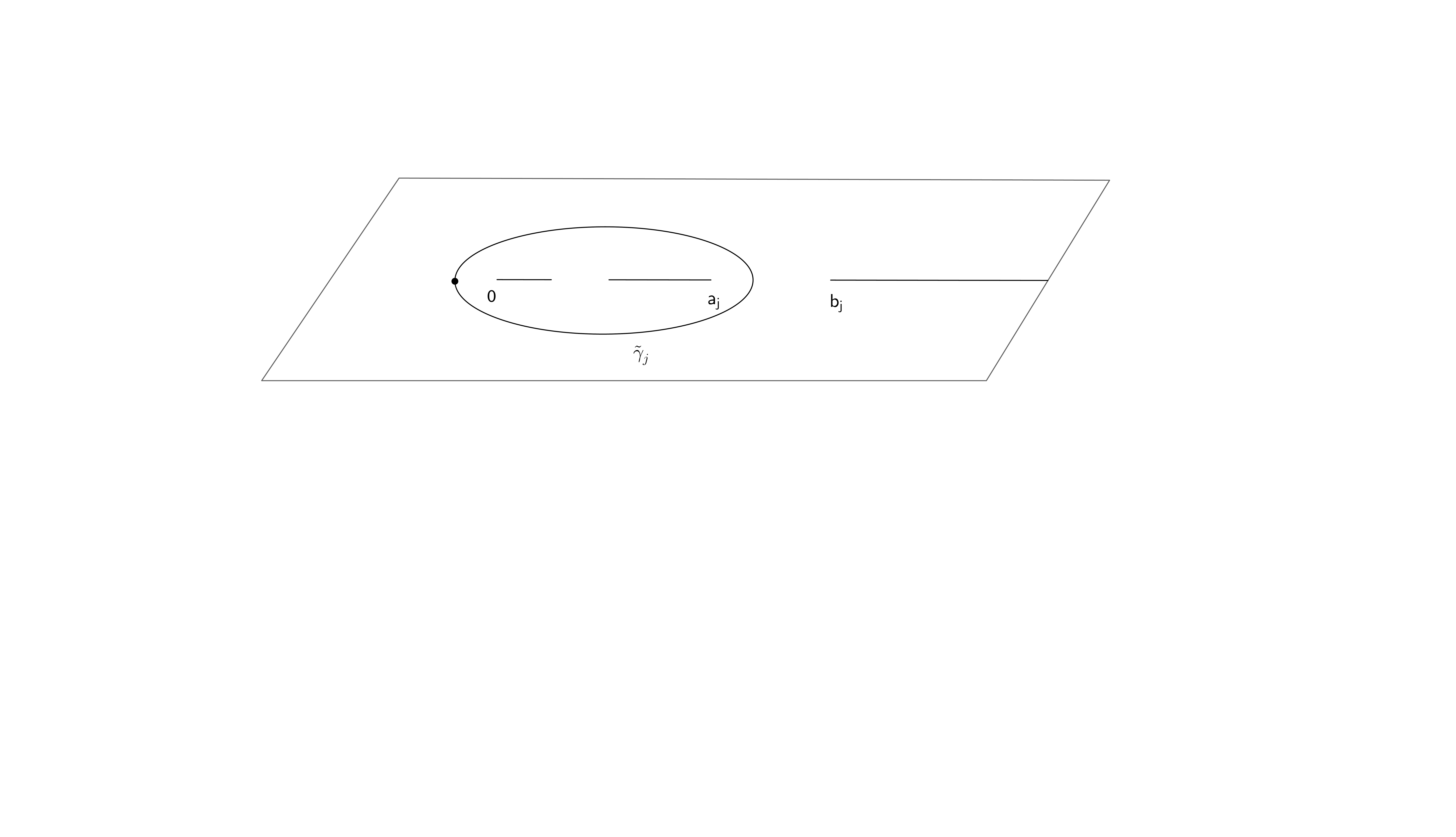}
		\caption{A generator of the fundamental group}
		\label{fund}
	\end{center}
\end{figure}

Now we extend $e^{i\tau_S(z)}$ along the generator $\tilde \gamma_j$ of the fundamental group in $\cD$, see Fig. \ref{fund}, which corresponds to the action of $\gamma_j$ on the universal covering. As result we obtain
$$
e^{i\tau_S(z(\gamma_j(\lambda))}=\alpha_S(\gamma_j)e^{i\tau_S(z(\l))}, \quad \alpha_S(\gamma_j)=e^{2\omega_j i}.
$$
This system of multipliers given on the generators forms an element $\alpha_S(\gamma)$, $\gamma\in\Gamma$, of  the group of \textit{characters} $\Gamma^*$. Similar relations generate characters $\alpha_J$ and $\alpha_T$. 

The function $e^{i\tau_J(z(\lambda))}$ is called the complex Green function of the group $\Gamma$, and can be represented as the Blaschke product $b_\infty(\lambda)=e^{i\tau_J(z(\lambda))}$ along the trajectory $z^{-1}(\infty)$. Generally, for $z_0=z(\lambda_0)$ we have
\begin{equation*}
	b_{z_0}(\lambda)=\prod_{\gamma\in\Gamma}
	\frac{\lambda-\gamma(\lambda_0)}{\lambda-\overline{\g(\lambda_0)}}e^{i\psi_{\gamma}},\quad e^{i\psi_\g}:=
	\frac{i-\overline{\g(\lambda_0)}}{i-\gamma(\lambda_0)}
	\left|
	\frac{i-\gamma(\lambda_0)}{i-\overline{\g(\lambda_0)}}\right|.
\end{equation*}
We point out that
\begin{equation*}
	-\log|b_{z_0}(\lambda)|=G(z(\lambda),z_0).
\end{equation*}
The character generated by $b_{z_0}$ is denoted by $\mu_{z_0}$, in particular, $\mu_\infty=\alpha_J$.

In the domain $\cD=\bar\bbC\setminus E_T$ we have, see \eqref{eq270},
$$
G(\zeta(\lambda),0)=\frac{-\log|\zeta(\lambda)|+\Im\tau_T(\z(\lambda))}{2},\quad
\z(\lambda)=\frac{b_0(\lambda)}{b_\infty(\lambda)}.
$$
That is,
\begin{equation*}
	b_0(\lambda)=\sqrt{\z(\lambda)e^{i\tau_T(\z(\lambda))}}\quad \text{and}\quad \mu_0^2=\alpha_T,\ 
	\mu_0(\gamma_j)=\mu_\infty(\gamma_j)=e^{i  \omega_j}.
\end{equation*}

Finally, we have to mention the well known relation between $\omega_j$'s and the harmonic measures.
Let $\omega(z,F)$ be the harmonic measure of the set $F\subset E$ in the domain $\cD=\bbC\setminus E$ w.r.t. $z\in \cD$. In $\bar\bbC\setminus E_J$ we have
\begin{equation}\label{212}
	\omega_k=\pi \omega(\infty, E_J^k), \ E^k_J=E_J\cap[b_0,a_k], 
\end{equation}
and 
\begin{equation}\label{213}
	\omega_k=2\pi\omega(\infty, E_T^k)=2\pi\omega(0, E_T^k), \ E^k_T=E_T\cap\{e^{iz}:\ z\in [0,a_k]\}
\end{equation}
in $\bar\bbC\setminus E_T$.

Relations of the Martin function in $\cD$ with the complex Martin function of the group $\Gamma$ we discuss in the next subsection.

\subsection{Szeg\"o kernel on the universal covering and  Ahlfors problem}
Let $H^p$ be the standard Hardy space in $\bbC_+$,
$$
\|f\|^p=\frac 1{2\pi}\int_{\bbR}|f(\xi)|^pd\xi, \quad f\in H^p,
$$
with a proper modification for $p=\infty$.
For a fixed character $\alpha\in\Gamma^*$ we introduce 
$$
H^p(\alpha)=\{f(\lambda):\ f\in H^p, \quad f(\gamma(\lambda))=\alpha(\gamma)f(\lambda)\}.
$$

\begin{lemma}
	If  $H^2(\alpha)\not=\{0\}$ for some $\alpha$, then
	\begin{equation}\label{214}
		\sum_{\gamma\in\Gamma}\gamma'(\xi)<\infty
	\end{equation}
	for almost all $\xi\in\bbR$.
\end{lemma}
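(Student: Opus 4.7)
The plan is to test the hypothesis against a nonzero $f\in H^2(\alpha)$, exploiting that $|f|^2$ is $\Gamma$-invariant (since $|\alpha(\gamma)|=1$), and to fold the integral $\|f\|^2$ over a fundamental set on $\bbR$. The Poincar\'e series $\sum_\gamma\gamma'(\eta)$ will arise naturally as the Jacobian factor of this folding.

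First I would fix a measurable set $E_0\subset\bbR$ whose $\Gamma$-translates tile $\bbR$ up to a Lebesgue-null set. For the Fuchsian groups appearing here this is available: the trace on $\bbR$ of the fundamental domain $\cF$ described in Subsection~2.1 works, since the real limit set of $\Gamma$ has Lebesgue measure zero. Then, for any nonzero $f\in H^2(\alpha)$, the identity $|f\circ\gamma|=|f|$ on $\bbR$ together with the change of variable $\xi=\gamma(\eta)$ in each piece of the partition gives
\begin{equation*}
\|f\|^2_{H^2}=\frac{1}{2\pi}\sum_{\gamma\in\Gamma}\int_{\gamma(E_0)}|f(\xi)|^2\,d\xi=\frac{1}{2\pi}\int_{E_0}|f(\eta)|^2\sum_{\gamma\in\Gamma}\gamma'(\eta)\,d\eta,
\end{equation*}
the interchange of sum and integral being justified by Tonelli. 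Since $f$ is a nonzero $H^2$ function, $f(\eta)\neq 0$ for a.e.\ $\eta\in\bbR$, so the finiteness of $\|f\|^2$ forces $\sum_\gamma\gamma'(\eta)<\infty$ for a.e.\ $\eta\in E_0$.

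To promote this to a.e.\ $\xi\in\bbR$, I would invoke the chain rule $(\gamma\delta)'(\eta)=\gamma'(\delta(\eta))\,\delta'(\eta)$; summing over $\gamma\in\Gamma$, which is the same as summing over $\gamma\delta\in\Gamma$, yields
\begin{equation*}
\sum_{\gamma\in\Gamma}\gamma'(\delta(\eta))=\frac{1}{\delta'(\eta)}\sum_{\gamma\in\Gamma}\gamma'(\eta),
\end{equation*}
so convergence at $\eta$ transfers to every orbit point $\delta(\eta)$, and the $\Gamma$-orbit of $E_0$ exhausts $\bbR$ up to a null set.

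The main obstacle I expect is the geometric input at the start: that a measurable fundamental set for the action of $\Gamma$ on $\bbR$ exists and that its translates cover $\bbR$ up to measure zero, equivalently that the real limit set of $\Gamma$ is Lebesgue-null. For the covering groups of the plane domains considered here this is standard, but it is the only non-calculational step; the rest is Tonelli plus the $H^2$ uniqueness theorem.
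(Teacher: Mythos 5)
Your proposal is correct and follows essentially the same route as the paper: unfold the $L^1$-norm of the $\Gamma$-invariant function $|f|^2$ (for a nonzero $f\in H^2(\alpha)$) over a measurable fundamental set on $\bbR$, read off the Poincar\'e series $\sum_{\gamma}\gamma'$ as the Jacobian factor, and conclude from $f\neq0$ a.e. on the boundary. Your extra details (limit set of Lebesgue measure zero, Tonelli, and the cocycle relation transferring convergence from the fundamental set to a.e.\ $\xi\in\bbR$) merely flesh out steps the paper leaves implicit.
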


\begin{proof}
	In fact, if there is   a measurable fundamental set $\bbE=\pd\cF\cap \bbR$ for the action of $\Gamma$ on $\bbR$ and a positive automorphic function $f\in L^1$, $f\circ\gamma=f$, non-vanishing almost everywhere, then
	$$
	\frac 1{2\pi}\int_\bbR f(\xi) d\xi=\frac 1{2\pi}\int_\bbE\{\sum_{\gamma\in\Gamma}\gamma'(\xi) \} f(\xi) d\xi<\infty,
	$$
	and we get \eqref{214}.
\end{proof}

For a reason that will be clear in a moment we consider $\Gamma$ as a subgroup of $SL_2(\bbR)$
(see Remark \ref{rem23}).
The condition \eqref{214} guarantees that the following series converges in the upper half-plane
\begin{equation}\label{eq215}
	\sum_{\g\in\G}\Im\g(\l)=\sum_{\g\in\G}\frac{\Im\l}{|\g_{21}\l+\g_{22}|^2}, \quad
	\gamma=\begin{bmatrix}\g_{11}&\g_{12}\\ \g_{21}&\g_{22}
	\end{bmatrix},\ \g_{ij}\in\bbR,\  \det \gamma=1.
\end{equation}
In other words the mass-point measure supported on the trajectory of infinity $\xi_\g=-\frac{\g_{22}}{\g_{21}}=\gamma^{-1}(\infty)$  with masses $\sigma_\g=\frac{1}{\g_{21}^2}$ 
satisfies the condition
$$
\sum_{\g\in\Gamma, \g\not=1_{\G}}\frac{\sigma_{\g}}{1+\xi_\g^2}=
\sum_{\g\in\Gamma, \g\not=1_{\G}}\frac{1}{\g_{21}^2+\g_{22}^2}<\infty.
$$
Thus, the corresponding function
$$
\cM(\l)=\l+\sum_{\g\in\Gamma, \g\not=1_{\G}}\frac{1+\l\xi_\g}{\xi_\g-\l}\frac{\sigma_\g}{1+\xi_\g^2}
$$
is well defined, has positive imaginary part  given by \eqref{eq215}, moreover, $\Im \cM(\gamma(\lambda))=\Im \cM(\lambda)$. That is, $e^{i\cM(\lambda)}$ is the complex Martin function of the group $\Gamma$ w.r.t. infinity, $\Im \cM(\l)=M(z(\l))$ and for  $z(\lambda)\simeq \lambda^2$, $\lambda=i\eta$, $\eta\to\infty$, we have
\begin{equation}\label{216}
	\cM(\lambda)-\cM(0)=\tau_S(z(\lambda))=-i\int_0^{z(\l)}\prod_{j=1}^g\frac{z-c_j}{\sqrt{(z-a_j)(z-b_j)}}\frac{dz}{2\sqrt{-z}}.
\end{equation}

The Blaschke product along the trajectories corresponding to the critical points
$$
\cW(\lambda)=\prod_{j=1}^g b_{c_j}(\l)
$$
is called the Widom function. Note that convergence of this product is called Widom condition for the given group (domain). Such Fuchsian groups were studied by Pommerenke \cite{Pom76}. For a complement of a system of intervals the Widom condition holds obviously.

\begin{theorem}[Pommerenke] The function $\cM'(\lambda)$ is holomorphic in the upper half-plane with zeros
	at $\{z^{-1}(c_j)\}_{j=1}^g$, see \eqref{216}. Moreover, it is of bounded chara\-cteristic in the upper half-plane
	such that
	\begin{equation}\label{217}
		\frac{d\cM(\l)}{d\l}=
		\sum_{\gamma\in\Gamma}\frac{1}{(\g_{21}\lambda+\g_{22})^2}=
		\sum_{\gamma\in\Gamma}\gamma'(\lambda)
		=\cW(\l) \vpi(\l)^2,
	\end{equation}
	where $\vpi$ is an outer function. Respectively, for its boundary values  on the real axis, one has
	\begin{equation}\label{218}
		\frac{d\cM(\l)}{d\l}= |\vpi(\l)|^2=\sum_{\gamma\in\Gamma}\frac{1}{|\g_{21}\lambda+\g_{22}|^2}
		\ge 1, \quad \cW(\lambda)\overline{\vpi(\l)}=\vpi(\l).
	\end{equation}
\end{theorem}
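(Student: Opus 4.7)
The plan is to derive the three equalities in \eqref{217} by passing from the explicit Herglotz representation of $\cM$ to a canonical inner–outer factorization, with the Widom function playing the role of the Blaschke part.

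The first two equalities come from term-by-term differentiation. For each $\gamma\neq 1_\Gamma$,
$$\frac{d}{d\lambda}\!\left(\frac{1+\lambda\xi_\gamma}{\xi_\gamma-\lambda}\cdot\frac{\sigma_\gamma}{1+\xi_\gamma^2}\right) = \frac{\sigma_\gamma}{(\xi_\gamma-\lambda)^2} = \frac{1}{(\gamma_{21}\lambda+\gamma_{22})^2} = \gamma'(\lambda),$$
while the leading $\lambda$ supplies the $\gamma=1_\Gamma$ contribution. Termwise summation is legitimized by locally uniform convergence on compact subsets of $\bbC_+$, which follows from the absolute convergence asserted in \eqref{eq215}. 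For the zero set, \eqref{216} reads $\cM(\lambda)=\tau_S(z(\lambda))+\cM(0)$, so $\cM'(\lambda)=\tau_S'(z(\lambda))\,z'(\lambda)$. The Schwarz–Christoffel integrand in \eqref{216} vanishes precisely at the critical points $c_1,\dots,c_g$, each simply, while $z'(\lambda)\neq 0$ throughout $\bbC_+$ because $z$ is a covering map; the zeros of $\cM'$ in $\bbC_+$ are therefore simple and equal to the orbits $\Gamma\cdot z^{-1}(c_j)$, $j=1,\dots,g$.

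The Widom function $\cW=\prod_{j=1}^g b_{c_j}$ is a Blaschke product with the identical zero set and multiplicities, so $F:=\cM'/\cW$ is analytic and nowhere zero on the simply connected domain $\bbC_+$ and admits a single-valued holomorphic square root $\varpi$, normalized by $\varpi(i)>0$; this yields $\cM'=\cW\varpi^2$. To place these functions in the Nevanlinna class $N$, I use that $e^{i\cM}$ is a non-vanishing $H^\infty$ function (since $\Im\cM\geq 0$ forces $|e^{i\cM}|\leq 1$), so under the Widom-type assumption on $\Gamma$ its logarithmic derivative $\cM'=-i(e^{i\cM})'/e^{i\cM}$ belongs to $N$; as $|\cW|\leq 1$, also $F,\varpi\in N$.

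The main substance is then to show that $\varpi$ is outer, equivalently that $F$ has no singular inner factor. Since $\cW$ already absorbs the entire Blaschke part of $\cM'$, this reduces to excluding a singular inner contribution to $\cM'$ itself, which I do by reconstructing $\log|\cM'(\lambda)|$ from its boundary values: on $\bbR$ the Poincaré series gives $\cM'(\xi)=\sum_\gamma |\gamma_{21}\xi+\gamma_{22}|^{-2}\geq 1$ almost everywhere, and the Herglotz structure of $\cM$ together with the Widom hypothesis on $\Gamma$ furnishes the Poisson recovery of $\log|\cM'|-\log|\cW|$ throughout $\bbC_+$. Once $\varpi$ is known to be outer, the boundary identities \eqref{218} follow mechanically: $|\cW|=1$ on $\bbR$ gives $\cM'(\xi)=|\varpi(\xi)|^2$, and comparing $\cM'(\xi)=\cW(\xi)\varpi(\xi)^2$ with its complex conjugate on $\bbR$ (where $\cM'$ is real and positive) yields $\cW\,\overline{\varpi}=\pm\varpi$, with the sign pinned to $+$ by the square-root normalization. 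The principal obstacle is precisely this outer property: geometrically it is compelling that $\cW$ captures the entire inner factor of $\cM'$, but the rigorous exclusion of a singular inner piece uses the Widom-type structure of $\Gamma$ in an essential way through Poisson-integral control of $\log|\cM'|$.
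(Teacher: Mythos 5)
Note first that the paper itself does not prove this statement: it is quoted as Pommerenke's theorem with a reference to \cite{Pom76}, so your attempt has to stand on its own. Your elementary steps are fine — termwise differentiation of the Herglotz series (justified by the locally uniform convergence coming from \eqref{eq215}), the identification of the zero set via \eqref{216}, the definition of $\vpi$ as a square root of $\cM'/\cW$ on the simply connected $\bbC_+$, and the derivation of \eqref{218} once everything else is in place (up to a minor conjugation slip: from $\cM'=\cW\vpi^2>0$ and $|\cW|=1$ a.e.\ on $\bbR$ one gets $\cW\vpi=\overline{\vpi}$, with no surviving sign ambiguity, rather than the relation you state).

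The genuine gap is that the only nontrivial content of the theorem — that $\cM'$ is of bounded characteristic and that $\vpi$ is \emph{outer} — is not proved. Your route to $N$-membership, ``$e^{i\cM}$ is bounded and zero-free, hence its logarithmic derivative $\cM'$ is of bounded characteristic,'' is not a valid principle: derivatives (and logarithmic derivatives) of bounded zero-free analytic functions need not lie in the Nevanlinna class, and in this very context the bounded-characteristic property of the derivative of the complex Green/Martin function is essentially \emph{equivalent} to the Widom condition (this is the point of Pommerenke's work), so no soft argument that does not quantitatively use the structure of $\Gamma$ can yield it — your parenthetical ``under the Widom-type assumption'' is doing all the work without an argument. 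Likewise, the statement that ``the Herglotz structure together with the Widom hypothesis furnishes the Poisson recovery of $\log|\cM'|-\log|\cW|$'' is exactly the outerness (absence of a singular inner factor) that has to be proved; you acknowledge this yourself, so the proof is incomplete at its decisive step. In the finite-gap setting of this paper there is an elementary way to close it that you did not exploit: by \eqref{216}, $\cM'(\l)=\tau_S'(z(\l))\,z'(\l)$ with $\tau_S'$ an explicit algebraic function, so both factors can be factored by hand into Blaschke products over the orbits of the $c_j$ and explicit outer pieces, giving \eqref{217}--\eqref{218} directly; for infinitely generated Widom groups one really needs Pommerenke's argument.
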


For an analytic function in $\bbC_+$, $\gamma\in\Gamma$,  we write
\begin{equation}\label{219}
	f|[\gamma]=\frac{f(\gamma(\lambda))}{(\gamma_{21}\lambda+\gamma_{22})},\quad \gamma=
	\begin{bmatrix}\gamma_{11}&\gamma_{12}\\
		\gamma_{21}&\gamma_{22}
	\end{bmatrix}.
\end{equation}
Let $\alpha_{\cW}\in\G^*$ be the character of the Widom function $\cW$. Then, see \eqref{217},
\begin{equation}\label{220}
	(\cM(\gamma(\l)))'=\cM'(\l)\Rightarrow \vpi|[\g]=\nu(\g)\vpi,
\end{equation}
where $\nu$ denotes a certain fixed root of the character $\alpha_{\cW}^{-1}$, $\nu^2=\alpha_{\cW}^{-1}$. 

\begin{remark}\label{rem23}
	Generally, a square root of a character is defined up to a half period, that is, up to a character $\fj\in\Gamma^*$ such that $\fj^2=1_{\G^*}$. 
	Note that  in the definition \eqref{219} it is essential that $\Gamma$ is considered as a subgroup of $SL_2(\bbR)$, but not as a Fuchsian group, where $\pm\gamma$ generates the same transform.
	In fact, such passage is also  defined up to a choice of a half period. Indeed,
	any of the groups
	\begin{equation}\label{eqgj}
		\G_\fj=\{\fj(\g)\g:\ \g\in\G\}\subset SL_2(\bbR),\quad \fj\in\G^*,\ \fj^2=1_{\G^*},
	\end{equation}
	generates  the same group of linear fractional transforms.
\end{remark}

Recall  that a function of bounded characteristic in $\bbC_+$ is of Smirnov class, or of Nevanlinna class $\cN_+$ in another terminology \cite[Chap. II Sect. 5]{Gar07}, if it can be represented as a ratio of two functions from $H^\infty$ with an \textit{outer} denominator. Note that functions of this class obey a maximum principle of a high generality. For instance, if $f\in \cN_+$ and its boundary values satisfy $f\in L^2$, then $f\in H^2$.

\begin{definition}
	For $\alpha\in\Gamma^*$  the space $A_{1}^{2}(\Gamma,\alpha)$ is formed by those analytic functions $f$ in $\bbC_+$ that satisfy the following three conditions
	\begin{itemize}
		\item[(i)] $f$ is of Smirnov class,
		\item[(ii)] $f|[\gamma]=\alpha(\gamma) f \ \forall \gamma\in\Gamma$,
		\item[(iii)] $\frac 1 {2\pi}\int_{\bbE}|f(\lambda)|^{2}d\lambda<\infty$.
	\end{itemize}
\end{definition}

\begin{proposition}\label{prop25}
	The following map $f\mapsto \vpi f$ sets a unitary correspondence between $H^2(\alpha)$ and $A_1^2(\G,\nu\alpha)$.
\end{proposition}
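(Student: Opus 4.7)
The plan is to verify, in order, the three defining conditions (i)--(iii) for elements of $A_1^2(\Gamma,\nu\alpha)$ applied to $\varpi f$ when $f\in H^2(\alpha)$, show isometry, and then invert the map. The transformation law in (ii) is the easy algebra: for $f\in H^2(\alpha)$ we have $f(\gamma(\lambda))=\alpha(\gamma)f(\lambda)$, while \eqref{220} rewrites as $\varpi(\gamma(\lambda))=(\gamma_{21}\lambda+\gamma_{22})\,\nu(\gamma)\varpi(\lambda)$. Multiplying,
$$
(\varpi f)(\gamma(\lambda))=(\gamma_{21}\lambda+\gamma_{22})\,\nu(\gamma)\alpha(\gamma)\,(\varpi f)(\lambda),
$$
i.e.\ $(\varpi f)|[\gamma]=\nu(\gamma)\alpha(\gamma)(\varpi f)$. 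Condition (i) follows because $\varpi$ is outer (hence in $\mathcal{N}_+$) and $f\in H^2\subset\mathcal{N}_+$, and $\mathcal{N}_+$ is closed under products.

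The heart of the proof is the isometry, where I would exploit the Poincar\'e-series identity \eqref{218}, $|\varpi(\xi)|^2=\sum_{\gamma\in\Gamma}\gamma'(\xi)$ on the real axis, together with the unfolding principle for the Fuchsian group. Since $|f|^2$ is $\Gamma$-invariant on $\bbR$ (characters are unimodular), and $\bbR=\bigsqcup_{\gamma\in\Gamma}\gamma(\bbE)$ modulo a null set, the change of variable $\xi\mapsto\gamma(\xi)$ gives
$$
\frac{1}{2\pi}\int_{\bbE}|\varpi(\xi)f(\xi)|^2\,d\xi
=\frac{1}{2\pi}\int_{\bbE}\Bigl(\sum_{\gamma\in\Gamma}\gamma'(\xi)\Bigr)|f(\xi)|^2\,d\xi
=\frac{1}{2\pi}\int_{\bbR}|f(\xi)|^2\,d\xi,
$$
which is exactly $\|\varpi f\|_{A_1^2}^2=\|f\|_{H^2(\alpha)}^2$. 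This also verifies (iii).

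For surjectivity I would set $f:=g/\varpi$ for $g\in A_1^2(\Gamma,\nu\alpha)$ and reverse the steps. Because $\varpi$ is outer, dividing a Smirnov-class function by $\varpi$ stays in $\mathcal{N}_+$; the automorphy computation done above runs backwards to yield $f(\gamma(\lambda))=\alpha(\gamma)f(\lambda)$, so $|f|^2$ is $\Gamma$-invariant on $\bbR$. Then the same unfolding identity gives $\int_\bbR|f|^2\,d\xi=\int_\bbE|g|^2\,d\xi<\infty$, so $f\in \mathcal{N}_+\cap L^2(\bbR)$. The Smirnov maximum principle (as recalled just before Definition 2.1 in the excerpt) then places $f$ in $H^2$, and hence in $H^2(\alpha)$.

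The main obstacle is subtle but essentially bookkeeping: one must ensure that the quotient $g/\varpi$ is genuinely of Smirnov class and that the pointwise identity $|\varpi|^2=\sum_\gamma\gamma'$ on $\bbR$ can be used inside the integral, which requires convergence almost everywhere and the measurability of a fundamental set $\bbE$, both supplied by \eqref{214} and the assumption that $\Gamma$ is of Widom type. Once these technicalities are in place, the proof reduces to the two lines of unfolding above.
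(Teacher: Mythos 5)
Your proposal is correct and follows essentially the same route as the paper: the isometry is exactly the paper's identity \eqref{eqhsp} (unfolding via $|\vpi|^2=\sum_{\g}\g'$ on $\bbR$ from \eqref{218}), the automorphy is \eqref{220}, Smirnov class comes from $\vpi$ being outer, and the inverse map $g\mapsto g/\vpi$ with the Smirnov maximum principle is the paper's converse step. No gaps; you merely spell out the change of variables and the maximum principle more explicitly than the paper does.
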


\begin{proof}
	Let $f\in H^2(\alpha)$. Then
	\begin{equation}\label{eqhsp}
		\|f\|^2=\frac 1{2\pi}\int_{\bbE}|f(\l)|^2\sum_{\g\in\G}\g'(\l)d\l=\frac 1{2\pi}\int_\bbE |f(\l)\vpi(\l)|^2d\l.
	\end{equation}
	Since $f\in H^2$ and $\vpi$ is outer  $f\vpi\in\cN_+$. The property (ii) follows from \eqref{220}.
	Conversely, if $g\in A_1^2(\G,\nu\alpha)$, then $f=g/\vpi$ is in the standard $L^2$ and of Smirnov class. Thus, it belongs to $H^2$. The ratio of two forms, see (ii), generates a function with character $\alpha$.
\end{proof}

The point evaluation functional is bounded in $H^2$. By $k^\alpha_{\l_0}(\l)=k^\alpha(\l,\l_0)$ we denote the reproducing kernel in $H^2(\alpha)$, $\langle f,k^\alpha_{\l_0} \rangle=f(\l_0)$ for all $f\in H^2(\alpha)$.

\begin{definition}\label{defszz}
	The reproducing kernel of the space $A_{1}^2(\G,\alpha)$ we call the Szeg\"o kernel (corresponding to the given group $\G$ and its character $\alpha)$,
	\begin{equation*}
		k_{Sz}(\lambda,\l_0;\Gamma,\alpha)=k^\alpha_{Sz}(\lambda,\l_0)=k^{\alpha\nu^{-1}}(\l,\l_0)\vpi(\l)\overline{\vpi(\l_0)}.
	\end{equation*}
	
\end{definition}

We slightly generalize the Ahlfors problem \ref{pA}.

\begin{problem}\label{pgA}
	For $\lambda_0\in\bbC_+$ find
	$$
	\cA(\lambda_0,\beta)=\sup\{|w'(\lambda_0)|: w\in H^\infty(\beta),\ \|w\|\le 1, \ w(\lambda_0)=0\}.
	$$
\end{problem}

\begin{theorem}\label{th28}
	A solution of Problem \ref{pgA} is given in terms of the Szeg\"o kernels
	\begin{equation}\label{222}
		\cA(\l_0,\beta)=\min_{\alpha^2=\beta}k^{\alpha}_{Sz}(\lambda_0,\l_0)=k_{Sz}^{\alpha(\l_0)}(\l_0,\l_0).
	\end{equation}
	If $\beta=1_{\G^*}$ the above minimum is assumed on the same half period $\fj\in\Gamma^*$ for all $\lambda_0\in\bbC_+$, that is, see \eqref{eqgj}, the Garabedian formula holds
	\begin{equation}\label{2222}
		\cA(\l_0,1_{\G^*})=\cA(\l_0,1_{\G_\fj^*})=
		k_{Sz}(\lambda_0,\l_0;\G_\fj, 1_{\G_\fj^*}).
	\end{equation}
	Generally,  the extremal character $\alpha(\l_0)$, $\alpha(\l_0)^2=\beta$, depends on $\l_0$.
\end{theorem}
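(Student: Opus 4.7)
The strategy is to follow the Garabedian approach as adapted by Abrahamse to the character-automorphic setting, reducing the $H^\infty$ extremal problem to a Hilbert-space extremal in the $A_1^2$ spaces. Under the Widom condition any $w\in H^\infty(\beta)$ with $w(\lambda_0)=0$ and $\|w\|_\infty\le 1$ vanishes on the entire $\Gamma$-orbit of $\lambda_0$, so the Green product $b_{\lambda_0}$ (of character $\mu_{\lambda_0}$) divides $w$ in $H^\infty$, giving the factorisation $w=b_{\lambda_0}\tilde w$ with $\tilde w\in H^\infty(\beta\mu_{\lambda_0}^{-1})$, $\|\tilde w\|_\infty\le 1$. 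Since $b_{\lambda_0}$ has a simple zero at $\lambda_0$,
\begin{equation*}
|w'(\lambda_0)|=|b'_{\lambda_0}(\lambda_0)|\cdot|\tilde w(\lambda_0)|,
\end{equation*}
so the task is to bound $|\tilde w(\lambda_0)|$ subject to the character constraint.

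For the upper bound, fix any square root $\alpha$ of $\beta$. Multiplication by $\tilde w$ sends $A_1^2(\Gamma,\alpha\mu_{\lambda_0}^{-1})$ into $A_1^2(\Gamma,\alpha)$ with operator norm at most $\|\tilde w\|_\infty\le 1$. Evaluating at $\lambda_0$ and invoking the reproducing property of $k^\alpha_{Sz}$, together with the identification of Proposition~\ref{prop25} and the boundary identity $|\cM'|=|\vpi|^2$ on $\bbR$, a Cauchy--Schwarz argument yields
\begin{equation*}
|b'_{\lambda_0}(\lambda_0)|\cdot|\tilde w(\lambda_0)|\le k^\alpha_{Sz}(\lambda_0,\lambda_0).
\end{equation*}
Minimising over $\alpha^2=\beta$ gives the upper bound in \eqref{222}.

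For sharpness, denote by $\alpha(\lambda_0)$ a minimiser and construct
\begin{equation*}
w_*(\lambda)=c\,b_{\lambda_0}(\lambda)\,\frac{k^{\alpha(\lambda_0)}_{Sz}(\lambda,\lambda_0)\,k^{\alpha(\lambda_0)\mu_{\lambda_0}^{-1}}_{Sz}(\lambda,\lambda_0)}{\cM'(\lambda)}
\end{equation*}
with a unimodular normalisation $c$. The character arithmetic works out: each kernel $k^\cdot_{Sz}$ is a weight-$1$ form of the indicated character, the product is weight-$2$ with character $\alpha^2\mu_{\lambda_0}^{-1}=\beta\mu_{\lambda_0}^{-1}$; dividing by $\cM'$ (weight-$2$, trivial character) yields a weight-$0$ function of character $\beta\mu_{\lambda_0}^{-1}$; multiplying by $b_{\lambda_0}$ gives character $\beta$, as required. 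The minimality of $\alpha(\lambda_0)$ is precisely the condition under which the boundary modulus $|k^{\alpha(\lambda_0)}_{Sz}(\xi,\lambda_0)\,k^{\alpha(\lambda_0)\mu_{\lambda_0}^{-1}}_{Sz}(\xi,\lambda_0)/\cM'(\xi)|$ is constant a.e.\ on $\bbR$, which together with $|b_{\lambda_0}|=1$ there gives $\|w_*\|_\infty=1$. A Taylor expansion at $\lambda_0$, using $b_{\lambda_0}(\lambda_0)=0$ with $b'_{\lambda_0}(\lambda_0)\ne 0$, then yields $|w_*'(\lambda_0)|=k^{\alpha(\lambda_0)}_{Sz}(\lambda_0,\lambda_0)$.

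For the Garabedian case $\beta=1_{\Gamma^*}$, the candidate characters $\alpha$ are exactly the half-periods $\fj\in\Gamma^*$. The anti-holomorphic Schwarz reflection $\lambda\mapsto\bar\lambda$ induces compatible involutions on $\Gamma^*$ and on the Szegő kernels and singles out a canonical $\fj_*$ for which the twisted reproducing kernel $k^{\fj_*}_{Sz}(\lambda,\lambda)$ admits a global reality/symmetry --- the analogue of Garabedian's original real scalar-product formula. This rigidity forces $\fj_*$ to minimise $k^{\fj}_{Sz}(\lambda_0,\lambda_0)$ \emph{simultaneously} for every $\lambda_0\in\bbC_+$; restating \eqref{222} in the twisted group $\Gamma_{\fj_*}$ with trivial character then gives \eqref{2222}. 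For $\beta\ne 1_{\Gamma^*}$ no such reflection-type rigidity is available, and the minimiser $\alpha(\lambda_0)$ genuinely moves with $\lambda_0$. I expect the main obstacle to be the sharpness argument: verifying that the proposed $w_*$ is inner on $\bbR$ and belongs to $H^\infty(\beta)$ with unit sup-norm is tightly coupled to showing that the argmin $\alpha(\lambda_0)$ is precisely the character which equalises the boundary modulus; the character bookkeeping involving $\nu$, $\mu_{\lambda_0}$ and $\alpha_\cW$ is delicate, and pinning down $\fj_*$ in the Garabedian case requires the Schottky-double construction.
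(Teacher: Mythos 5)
Your skeleton (factor out $b_{\l_0}$, estimate $|\tilde w(\l_0)|$ by Cauchy--Schwarz among the $A_1^2$ spaces, then minimise over $\alpha^2=\beta$) is the paper's, but both halves of your argument have genuine gaps. For the upper bound, the Cauchy--Schwarz step \eqref{mineq} only yields $|\tilde w(\l_0)|^2\le k_{Sz}^{\alpha'\beta\mu_{z_0}^{-1}}(\l_0,\l_0)/k_{Sz}^{\alpha'}(\l_0,\l_0)$ for characters $\alpha'$; to convert this into your claimed inequality $|b'_{\l_0}(\l_0)|\,|\tilde w(\l_0)|\le k^{\alpha}_{Sz}(\l_0,\l_0)$ one must express $|b'_{z_0}(\l_0)|^2$ through kernels, namely $k_{Sz}^{\alpha}(\l_0,\l_0)\,k_{Sz}^{\alpha^{-1}\mu_{z_0}}(\l_0,\l_0)=|b'_{z_0}(\l_0)|^2$, i.e.\ \eqref{eq227}, which the paper derives from the duality $L^2_{d\l|\bbE}\ominus A_1^2(\G,\alpha)=\overline{A_1^2(\G,\alpha^{-1})}$; the boundary identity $|\cM'|=|\vpi|^2$ alone does not produce it, and you never state it. More seriously, your sharpness construction is wrong: the candidate $w_*=c\,b_{\l_0}\,k_{Sz}^{\alpha(\l_0)}k_{Sz}^{\alpha(\l_0)\mu_{z_0}^{-1}}/\cM'$ is not the Ahlfors function. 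Already for the trivial group ($\cD=\bbC_+$, $\cM'=1$, $k_{Sz}(\l,\l_0)=i/(\l-\bar\l_0)$) its boundary modulus is $|c|/|\xi-\bar\l_0|^2$, which is not constant, so your claim that minimality of $\alpha(\l_0)$ equalises this modulus fails (there is nothing to minimise there); normalising to $\|w_*\|_\infty=1$ gives $|w_*'(\l_0)|=\tfrac1{8\Im\l_0}<\tfrac1{2\Im\l_0}=k_{Sz}(\l_0,\l_0)$. The correct extremal function is a \emph{ratio} of kernels, $v=\mathrm{const}\cdot b_{z_0}\,k_{Sz}^{\alpha}/k_{Sz}^{\alpha^{-1}\mu_{z_0}}$ (Abrahamse's $w_{\l_0,\beta\mu_{z_0}^{-1}}$ of \eqref{eq226}); its unimodularity on $\bbR$ comes from the same duality identity, and the paper closes the argument by noting that generically $w$ is a Blaschke product of $g$ Green factors, which forces $\alpha_0=\alpha_0^{-1}\beta^{-1}$ in \eqref{1eq227} and hence \eqref{222}, with degenerate cases handled by continuity.

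The remaining two assertions are also not established by your reflection/rigidity heuristic. In a Denjoy domain the real symmetry is shared by all $2^g$ half-period twists, so symmetry alone cannot single out a distinguished $\fj$; the paper proves \eqref{2222} by the explicit computation in Theorem \ref{th211}, comparing $\sin I_{\e_1,\dots,\e_g}(z)$ for the functions $\Omega_{\e_1,\dots,\e_g}$ and using $\int_\bbR \Im z\,|x-z|^{-2}dx=\pi$ to show that $\e_k\equiv 1$ gives the minimum for \emph{every} $z_0$ simultaneously. Likewise, ``the minimiser genuinely moves with $\l_0$'' needs a proof, not an expectation: the paper exhibits this concretely in the elliptic case (end of the proof of Theorem \ref{th24n}), where the zero of $K_{m_+^D}(\cdot,z_0)$ leaves the gap $(a_1,b_1)$ for the other sheet once $z_0$ crosses the critical value $x_*$, forcing a change of the half period.
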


\begin{definition}\label{defj}
	In what follows the choice of the group $\Gamma\subset SL_2(\bbR)$ is assumed to match with the extremal half period in the Ahlfors problem, see Remark \ref{rem23}. That is, cf. \eqref{2222},
	$$
	\cA(\l_0,1_{\G^*})=
	k_{Sz}(\lambda_0,\l_0;\G, 1_{\G^*})=k_{Sz}(\l_0,\l_0).
	$$
\end{definition}

As it was mentioned in Garabedian's original paper \cite{Gar49}, Problem \ref{pA} is the easiest  version  of Nevanlinna-Pick kind problems for multi connected domains (Rie\-mann surfaces). They were studied later by Abrahamse \cite{Ab79} and in many consequent papers, see e.g. \cite{BV96,KY97}.
The statement below is an easy consequence of Abrahamse's theorem. Note that a similar statement can be founded in an essentially much more general situation in \cite{VY14}.

\begin{proposition}\label{lprop22}
	\begin{equation}\label{eq225}
		\sup_{w\in H^\infty(\beta),\ \|w\|\le 1} |w(\l_0)|^2=\inf_{\alpha\in \Gamma^*}\frac{k_{Sz}^{\alpha\beta}(\l_0,\l_0)}
		{k_{Sz}^\alpha(\l_0,\l_0)}=\frac{k_{Sz}^{\alpha_0\beta}(\l_0,\l_0)}{k_{Sz}^{\alpha_0}(\l_0,\l_0)},
		\ \ \alpha_0=\alpha_0(\l_0).
	\end{equation}
	Moreover, an extremal function $w_{\l_0,\beta}(\l)$ is a Blaschke product and
	\begin{equation}\label{eq226}
		w_{\l_0,\beta}(\l)\overline{w_{\l_0,\beta}(\l_0)}=\frac{k_{Sz}^{\alpha_0\beta}(\l,\l_0)}{k_{Sz}^{\alpha_0}(\l,\l_0)}.
	\end{equation}
\end{proposition}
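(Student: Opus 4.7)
First, for $w \in H^\infty(\beta)$ with $\|w\|_\infty \le 1$ and any character $\alpha \in \Gamma^*$, the plan is to observe that multiplication by $w$ is a contraction
$$M_w\colon A_{1}^{2}(\Gamma,\alpha) \to A_{1}^{2}(\Gamma,\alpha\beta),$$
since the product of forms adds characters and $|w|\le 1$ on $\bbR$ controls the $L^{2}$-norm via \eqref{eqhsp}. A routine reproducing-kernel computation yields
$$M_w^*\, k_{Sz}^{\alpha\beta}(\cdot,\l_0) = \overline{w(\l_0)}\, k_{Sz}^\alpha(\cdot,\l_0);$$
squaring norms and using $\|M_w^*\|\le 1$ together with $\|k_{Sz}^\gamma(\cdot,\l_0)\|^{2}=k_{Sz}^\gamma(\l_0,\l_0)$ gives $|w(\l_0)|^{2}\,k_{Sz}^\alpha(\l_0,\l_0) \le k_{Sz}^{\alpha\beta}(\l_0,\l_0)$. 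Taking the infimum over $\alpha$ proves the $\le$-half of \eqref{eq225}, and compactness of the torus $\Gamma^{*}$ plus continuity in $\alpha$ guarantees attainment at some $\alpha_{0}=\alpha_{0}(\l_0)$.

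\textbf{Candidate extremal.} Next, I would propose
$$w_{\l_0,\beta}(\l) \;:=\; \overline{w_{\l_0,\beta}(\l_0)}^{\,-1}\, \frac{k_{Sz}^{\alpha_0\beta}(\l,\l_0)}{k_{Sz}^{\alpha_0}(\l,\l_0)},$$
whose automorphy character in $\l$ is $(\alpha_{0}\beta)\alpha_{0}^{-1}=\beta$, so that \eqref{eq226} holds by construction. If this ratio is holomorphic on $\bbC_{+}$ and unimodular a.e.\ on $\bbR$, its value at $\l_{0}$ will saturate the upper bound, the Blaschke-product claim will follow, and the proposition is proven.

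\textbf{Main obstacle: unimodularity on $\bbR$.} The non-routine step is thus converting the minimality of $\alpha_{0}$ into the inner-function property of the kernel ratio. My plan is to invoke Abrahamse's one-point Nevanlinna-Pick theorem \cite{Ab79}: the Pick admissibility condition for the datum $\l_{0}\mapsto b$ in $H^\infty(\beta)$ with $\|w\|\le 1$ reads exactly $|b|^{2}\le k_{Sz}^{\alpha\beta}(\l_0,\l_0)/k_{Sz}^{\alpha}(\l_0,\l_0)$ for every $\alpha\in\Gamma^{*}$, and at the boundary value $|b|^{2}=\inf_{\alpha}$ Abrahamse's construction produces the interpolant in precisely the kernel-ratio form above, automatically inner and of character $\beta$. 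Alternatively one can bypass Abrahamse and reason variationally: perturbing $\alpha_{0}\mapsto \alpha_{0}e^{it\chi}$ along every one-parameter subgroup of $\Gamma^{*}$ and differentiating the ratio at $t=0$, the first-order stationarity — transported across the unitary $f\mapsto \vpi f$ of Proposition \ref{prop25} and paired against real test characters on the fundamental set $\bbE$ — forces the pointwise identity $|w_{\l_0,\beta}(\l)|=1$ a.e.\ on $\bbR$, completing the argument.
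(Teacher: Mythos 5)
Your proposal follows essentially the same route as the paper: your contractive-multiplier bound $|w(\l_0)|^2 k_{Sz}^{\alpha}(\l_0,\l_0)\le k_{Sz}^{\alpha\beta}(\l_0,\l_0)$ is exactly the paper's ``trivial estimation'' \eqref{mineq} (Cauchy--Schwarz against the reproducing kernels), and for the attainment, the inner/Blaschke property and formula \eqref{eq226} you, like the paper, delegate to Abrahamse's one-point Nevanlinna--Pick theorem \cite{Ab79}. This matches the paper's own treatment, which presents the proposition as an easy consequence of Abrahamse's result rather than proving the hard direction from scratch.
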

Note that one side of the statement deals with the trivial estimation
\begin{align}
	|w(\lambda_0)k^{\alpha}_{Sz}(\l_0,\l_0)|^2=&|\langle wk^{\alpha}_{Sz,\l_0},k^{\alpha\beta}_{Sz,\l_0} \rangle|^2
	\nonumber \\ \le &
	\|k^{\alpha}_{Sz,\l_0}\|^2\|k^{\alpha\beta}_{Sz,\l_0}\|^2=
	k^{\alpha}_{Sz,\l_0}(\l_0) k^{\alpha\beta}_{Sz,\l_0}(\l_0).\label{mineq}
\end{align}
Also, due to the nature of formulas \eqref{eq225}, \eqref{eq226}, it does not matter to use Szeg\"o kernels, or the reproducing kernels of character automorphic Hardy spaces.

\begin{proof}[Proof of \eqref{222} in Theorem \ref{th28}] Let $v$ be an extremal  function for Problem \ref{pgA}. Then
	$v=b_{z_0}w$, where $z_0=z(\l_0)$ and $w=w_{\l_0,\beta\mu_{z_0}^{-1}}$ is an extremal function from
	\eqref{eq226}.
	
	Now we use duality \cite{Yu01} $L^2_{d\lambda|\bbE}\ominus A_1^2(\Gamma,\alpha)=\overline{A^2_1(\Gamma,\alpha^{-1})}$, $\forall \alpha\in\Gamma^*$, due to which
	$$
	b_{z_0}(\lambda)\frac{\overline{k_{Sz}^{\alpha}(\lambda,\lambda_0)}}{\|k^\alpha_{Sz,\lambda_0}\|}=
	\frac{{k_{Sz}^{\alpha^{-1}\mu_{z_0}}(\lambda,\lambda_0)}}{\|k^{\alpha^{-1}\mu_{z_0}}_{Sz,\lambda_0}\|},
	\quad \lambda\in\bbR,
	$$
	and
	\begin{equation}\label{eq227}
		{k_{Sz}^{\alpha^{-1}\mu_{z_0}}(\lambda_0,\lambda_0)}{k_{Sz}^{\alpha}(\lambda_0,\lambda_0)}=
		|b'_{z_0}(\lambda_0)|^2.
	\end{equation}
	Using $|w(\l)|=1$ on $\bbR$ we get, simultaneously \eqref{eq226} and the dual representation
	\begin{equation}\label{1eq227}
		w(\l)\overline{w(\lambda_0)}=\frac{k_{Sz}^{\alpha_0\beta}(\l,\l_0)}{k^{\alpha_0\mu_{z_0}}_{Sz}(\l,\l_0)}
		=\frac{k_{Sz}^{\alpha_0^{-1}}(\l,\l_0)}{k^{\alpha^{-1}_0\beta^{-1}\mu_{z_0}}_{Sz}(\l,\l_0)}.
	\end{equation}
	Generically, a solution of the problem is a unique Blaschke product of $g$ complex Green functions, see 
	Theorem \ref{th211}  and \cite{Ab79, KY97}, that is, $\alpha_0=\alpha_{0}^{-1}\beta^{-1}$ and for this character
	$$
	|v'(\l_0)|^2=|b'_{z_0}(\l_0)|^2|w(\lambda_0)|^2=
	|b'_{z_0}(\l_0)|^2\frac{k_{Sz}^{\alpha_0\beta}(\l_0,\l_0)}{k^{\alpha_0\mu_{z_0}}_{Sz}(\l_0,\l_0)}.
	$$
	By \eqref{eq227}, we have
	$$
	|b'_{z_0}(\l_0)|^2\frac{k_{Sz}^{\alpha^{-1}\beta}(\l_0,\l_0)}{k^{\alpha^{-1}\mu_{z_0}}_{Sz}(\l_0,\l_0)}=|b'_{z_0}(\l_0)|^2\frac{k_{Sz}^{\alpha}(\l_0,\l_0)}{k^{\alpha^{-1}\mu_{z_0}}_{Sz}(\l_0,\l_0)}
	={k_{Sz}^{\alpha}(\l_0,\l_0)}^2
	$$
	for an arbitrary $\alpha$ such that $\alpha^2=\beta$. Due to \eqref{eq225}, we obtain \eqref{222} with $\alpha(\l_0)=\alpha_0^{-1}$. In degenerated cases the formula holds by continuity, although the choice for $\alpha_0$ is not unique.
\end{proof}

Now we will essentially specify the second statement of Theorem \ref{th28} for finitely connected Denjoy domains.
Note that the general formula for the analytic capacity in  Denjoy domains is a well known result of Pommerenke \cite{Pom1}, see also \cite[\S 8.8]{Sim15}.

\begin{theorem}\label{th211} Let $\cD=\bbC\setminus E_S$ and
	\begin{equation}\label{eq228}
		\Omega(z)=\frac{1}{\sqrt{-z}}\prod_{j=1}^g\sqrt{\frac{z-a_j}{z-b_j}}.
	\end{equation}
	Then
	\begin{equation}\label{eq229}
		k_{Sz}(\lambda_0,\lambda_0)=
		\frac{\Im \Omega(z_0)}{2 \Im z_0|\Omega(z_0)|} \left|\frac{dz}{d\lambda}(\lambda_0)\right|,
		\quad z_0=z(\lambda_0).
	\end{equation}
	Respectively, the extremal Ahlfors function of Problem \ref{pA} is of the form
	\begin{equation}\label{eq230}
		w_{z_0, \cD}(z)=\frac{z-z_0}{z-\bar z_0}\frac{\Omega(z)-\Omega(\bar z_0)}{\Omega(z)+\Omega(z_0)}.
	\end{equation}
\end{theorem}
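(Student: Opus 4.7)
My plan is to verify the explicit formula \eqref{eq230} for the extremal Ahlfors function by direct inspection and then to read off \eqref{eq229} via Theorem~\ref{th28}. Since any single-valued $w \in H^\infty(\cD)$ with $w(z_0) = 0$ lifts via $z=z(\l)$ to $\tilde w \in H^\infty(\bbC_+, 1_{\G^*})$ with $\tilde w'(\l_0) = w'(z_0)\,z'(\l_0)$, the chain rule together with Theorem~\ref{th28} and Definition~\ref{defj} give $k_{Sz}(\l_0,\l_0) = \cA(\l_0,1_{\G^*}) = \cA_\cD(z_0)\,|z'(\l_0)|$, so it suffices to identify $w_{z_0,\cD}$ as the Ahlfors extremal in $\cD$ and to evaluate $|w_{z_0,\cD}'(z_0)|$.

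The first step is to set up $\Omega$. The rational function $\Omega^2$ has simple zeros at $a_1,\ldots,a_g,\infty$ and simple poles at $0,b_1,\ldots,b_g$, all on $E_S$; pairing them as $(0,a_1),(b_1,a_2),\ldots,(b_g,\infty)$, each pair is joined by a connected component of $E_S$, so the branch cuts of $\Omega$ can be placed inside $E_S$ and $\Omega$ is single-valued holomorphic on $\cD$. Normalizing $\Omega(-x) > 0$ for $x > 0$ and applying Schwarz reflection across $\bbR_-\subset\cD$ yields $\Omega(\bar z) = \overline{\Omega(z)}$. A boundary analysis shows $\Omega$ is purely imaginary on each band of $E_S$ and real on the gaps $(a_j,b_j)\subset\cD$ and on $\bbR_-$; together with the asymptotics $\Omega(z)\sim 1/\sqrt{-z}$ at infinity, this confines $\Omega(\cD\cap\bbC_+)$ to the open first quadrant and $\Omega(\cD)$ to the open right half-plane.

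Next I would check that $w := w_{z_0,\cD}$ is admissible. The apparent pole at $\bar z_0$ of $(z-z_0)/(z-\bar z_0)$ is cancelled by the zero of $\Omega(z)-\Omega(\bar z_0) = \Omega(z)-\overline{\Omega(z_0)}$ at $z=\bar z_0$, and $\Omega(z)+\Omega(z_0)$ is zero-free on $\cD$ since $-\Omega(z_0)\in\{\Re w < 0\}$ is disjoint from $\Omega(\cD)$; hence $w$ is holomorphic on $\cD$. For $z \in E_S$ we have $z \in \bbR$ and $\Omega(z) = i\eta$ with $\eta\in\bbR$; the first factor then has modulus one, while
$$
|i\eta - \overline{\Omega(z_0)}|^2 = |i\eta + \Omega(z_0)|^2 = \eta^2 + |\Omega(z_0)|^2 + 2\eta\,\Im\Omega(z_0)
$$
shows the same for the second. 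The maximum principle yields $|w|\le 1$ on $\cD$, and a one-line product-rule computation (since the first factor vanishes at $z_0$) gives
$$
w'(z_0) = \frac{1}{z_0-\bar z_0}\cdot\frac{\Omega(z_0) - \overline{\Omega(z_0)}}{2\,\Omega(z_0)} = \frac{\Im\Omega(z_0)}{2\,\Im z_0\cdot\Omega(z_0)}.
$$

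The final step is to identify $w$ with the Ahlfors extremal. By counting preimages of $\overline{\Omega(z_0)^2}$ under the degree-$(g+1)$ rational map $\Omega^2$ and using the quadrant information from Step~2, $w$ has exactly $g+1$ zeros in $\cD$ (namely $z_0$ together with $g$ points in $\cD\cap\bbC_-$ arising as preimages of $\overline{\Omega(z_0)}$ under $\Omega$ distinct from $\bar z_0$) and unimodular boundary values on $E_S$; these are the defining properties of the (unique) Ahlfors extremal inner function in the $(g+1)$-connected Denjoy domain $\cD$, so $|w'(z_0)| = \cA_\cD(z_0)$ and \eqref{eq229} follows from the chain-rule identity above. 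The main technical obstacle is precisely this last identification, i.e.\ the image/quadrant analysis of $\Omega$ in Step~2 together with the correct preimage count; both rest on viewing $\Omega$ as a $(g+1)$-fold covering of a slit right half-plane, which reflects the Schottky double structure underlying the Denjoy domain.
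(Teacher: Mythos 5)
Your steps on admissibility are essentially sound and agree with facts the paper also uses: with the branch of $\Omega$ positive on $\bbR_-$ one indeed has $\arg\Omega=\tfrac\pi2\,\omega_{\bbC_+}(z,E_S)$ in $\bbC_+$, so $\Omega$ maps $\bbC_+$ into the open first quadrant and $\cD$ into $\{\Re w>0\}$; consequently $\Omega+\Omega(z_0)$ is zero\-free in $\cD$, all $g+1$ solutions of $\Omega(z)=\overline{\Omega(z_0)}$ lie in $\cD\cap\bbC_-$, the boundary modulus of $w_{z_0,\cD}$ is $1$ on both edges of $E_S$, and $|w_{z_0,\cD}'(z_0)|=\frac{\Im\Omega(z_0)}{2\Im z_0|\Omega(z_0)|}$. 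The chain-rule reduction $\cA(\l_0,1_{\G^*})=\cA_\cD(z_0)\,|z'(\l_0)|$ is also fine. Your route (direct verification in $\cD$) is genuinely different from the paper's, which computes the Szeg\"o kernels of all $2^g$ half-period spaces explicitly via the weighted spaces $H^2_{\Omega_{\e_1,\dots,\e_g}}$ (Lemma \ref{l212} and the factorization $\vpi^2\phi_\Omega^2=-i\Omega\,\frac{dz}{d\l}$), minimizes over the half periods by the $\sin(I_\e)$ comparison, and only then reads off the extremal function from Theorem \ref{th28}/Proposition \ref{lprop22}.

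The genuine gap is your last step: ``unimodular boundary values on $E_S$ and exactly $g+1$ zeros, one of them at $z_0$'' are \emph{not} defining properties of the Ahlfors function. For $g\ge 1$ the single-valued degree-$(g+1)$ inner functions on $\cD$ vanishing at $z_0$ form (generically) a $g$-real-parameter family: the remaining zeros $z_1,\dots,z_g\in\cD$ are only constrained by the $g$ real character equations $\mu_{z_0}\prod_{k=1}^g\mu_{z_k}=1_{\G^*}$ imposed on $2g$ real parameters, and every member of this family is unimodular on $E_S$ with exactly $g+1$ zeros. Ahlfors' theorem says the extremal \emph{is} such a $(g+1)$-fold covering of $\bbD$; the converse you invoke is false, and the preimage count under $\Omega^2$ cannot substitute for it. So what you have proved is that $w_{z_0,\cD}$ is an admissible competitor, i.e.\ only the lower bound $k_{Sz}(\l_0,\l_0)\ge\frac{\Im\Omega(z_0)}{2\Im z_0|\Omega(z_0)|}\left|\frac{dz}{d\l}(\l_0)\right|$. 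To close the argument you need an extremality certificate: either the Garabedian--Havinson duality (exhibit the dual ``Garabedian'' density and check positivity of $w\,g\,dz$ on $\pd\cD$), or, as in the paper, the identity \eqref{222} of Theorem \ref{th28} (resting on Abrahamse's theorem via Proposition \ref{lprop22}) together with the explicit kernels $K_{\Omega_{\e_1,\dots,\e_g}}$ and the proof that the minimum over half periods is attained at $\e_k\equiv1$ \emph{independently of} $z_0$. Note also that this last independence is exactly what Definition \ref{defj} presupposes and what the paper establishes inside this very proof, so citing Definition \ref{defj} alone does not tell you that the normalized kernel is the one built from the particular $\Omega$ in \eqref{eq228}.
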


First we prove the following lemma.

\begin{lemma}\label{l212}
	Let $H^2_{\Omega}$ be the space of Smirnov class functions $F(z)$ in $\cD$ with the scalar product
	$$
	\|F\|^2_{\Omega}=\frac{1}{\pi}\int_E\frac{|F(x+ i0)|^2+|F(x-i0)|^2}{2}\Im\Omega(x) dx=
	\frac 1{2\pi i}\oint_E |F(x)|^2\Omega(x)dx.
	$$
	Then the reproducing kernel of this space is of the form
	\begin{equation}\label{repom}
		K_{\Omega}(z,z_0)=\frac{-\Omega^{-1}(z)+\Omega^{-1}(\bar z_0)}{{2}(z-\bar z_0)}.
	\end{equation}
\end{lemma}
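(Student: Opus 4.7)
The plan is to verify that $K_{\Omega}(\cdot,z_0)$ lies in $H^2_{\Omega}$ and then to check the reproducing property by reducing the pairing to two Cauchy integrals: one involving the jump of $F$ across $E$, and one involving the jump of $F\Omega$.

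First I would show $K_\Omega(\cdot,z_0)\in H^2_\Omega$. The apparent pole at $z=\bar z_0$ is removable because the numerator $-\Omega^{-1}(z)+\Omega^{-1}(\bar z_0)$ vanishes there, so $K_\Omega(\cdot,z_0)$ is single-valued analytic on $\cD$. Smirnov-class membership follows from $\Omega^{-1}$ being algebraic, hence of bounded characteristic, on the associated hyperelliptic surface. Finiteness of the weighted $L^2$-norm is then verified by local asymptotics: near each $a_j,b_j$, $\Omega^{-1}$ has a square-root singularity that is integrable against the weight $|\Omega|$, while at infinity $|\Omega(z)|\sim|z|^{-1/2}$ gives $|K_\Omega|^2|\Omega|\sim |z|^{-3/2}$.

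Next I would establish the jump relations for $\Omega$. A direct inspection of $\Omega^2(z)=-\frac{1}{z}\prod_{j=1}^g\frac{z-a_j}{z-b_j}$ shows that $\Omega^2<0$ on each component of $E_S$, so $\Omega$ is purely imaginary on $E$ and satisfies $\Omega(x+i0)=-\Omega(x-i0)$. Combined with Schwarz reflection $\overline{\Omega(\bar z)}=\Omega(z)$ (whence $\overline{\Omega^{-1}(\bar z_0)}=\Omega^{-1}(z_0)$ and $\overline{\Omega^{-1}(x+i0)}=\Omega^{-1}(x-i0)$), a brief computation gives
\[
\overline{K_\Omega(x\pm i0,z_0)}\,\Omega(x\pm i0)=\frac{1\pm \Omega(x+i0)\,\Omega^{-1}(z_0)}{2(x-z_0)}.
\]
Interpreting $\frac{1}{2\pi i}\oint_E$ as the jump integral $\frac{1}{2\pi i}\int_E[\,\cdot\,(x+i0)-\,\cdot\,(x-i0)]\,dx$ (the orientation for which the stated norm formula is positive), the pairing $\langle F,K_\Omega(\cdot,z_0)\rangle$ splits into
\[
\frac{1}{2\pi i}\int_E\frac{F(x+i0)-F(x-i0)}{2(x-z_0)}\,dx\;+\;\frac{\Omega^{-1}(z_0)}{2\pi i}\int_E\frac{G(x+i0)-G(x-i0)}{2(x-z_0)}\,dx,
\]
where $G=F\Omega$. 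Applying Cauchy's integral formula in $\cD$ to $F$ and to $G$, both of which decay at infinity by the $H^2_\Omega$-condition together with $|\Omega|\sim|z|^{-1/2}$, yields $\tfrac{1}{2}F(z_0)+\tfrac{1}{2}\Omega^{-1}(z_0)G(z_0)=F(z_0)$, as required.

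The main obstacle is the Cauchy step at $\infty$: one must verify that the weighted $L^2$-integrability defining $H^2_\Omega$, combined with the $|z|^{-1/2}$ behavior of $\Omega$, forces $F$ and $F\Omega$ to decay fast enough that closing the contour on a large circle $|z|=R$ contributes nothing as $R\to\infty$. A secondary technical point is the finiteness of $\|K_\Omega(\cdot,z_0)\|_\Omega$ at the branch points of $\Omega$, which requires tracking the competing singular behaviors of $\Omega^{-1}$ and of the weight.
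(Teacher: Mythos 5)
Your argument is correct and is essentially the paper's proof: both use $\overline{\Omega(x)}=-\Omega(x)$ on $E$ together with $\overline{\Omega(\bar z)}=\Omega(z)$ to rewrite the pairing as the contour (jump) integral $\frac 1{2\pi i}\oint_E\frac{\Omega^{-1}(x)+\Omega^{-1}(z_0)}{2(x-z_0)}F(x)\Omega(x)\,dx$, which splits into Cauchy integrals for $F$ and for $F\Omega$, each contributing $\tfrac12 F(z_0)$. The extra items you supply --- checking $K_\Omega(\cdot,z_0)\in H^2_\Omega$ via the removable singularity at $\bar z_0$ and the endpoint/infinity asymptotics, and flagging the decay needed to discard the large circle --- are exactly the details the paper's one-line appeal to ``the Cauchy theorem'' leaves implicit, and your estimates for them are right.
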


\begin{proof}
	Note that $\overline{\Omega(x)}=-\Omega(x)$ if $x\in E$ and 
	$\overline{\Omega(\bar z)}=\Omega(z)$ for $z\in \cD$. For $F\in H^2_{\Omega}$ we can apply the Cauchy theorem
	$$
	\langle F,K_{\Omega}(\cdot,z_0) \rangle=\frac 1{2\pi i}\oint_E  \frac{\Omega^{-1}(x)+\Omega^{-1}(z_0)}{2(x-z_0)}F(x)\Omega(x)dx= F(z_0).
	$$
\end{proof}

\begin{proof}[Proof of Theorem \ref{th211}] According to Proposition \ref{prop25} the space $A_1^2(\Gamma)$ can be interpreted as the collection of functions $f\vpi$, where $f\in H^2(\nu^{-1})$. In its turn, to
	$f\in H^2(\nu^{-1})$ we associate a multivalued function $F$ in $\cD$ such that $F(z(\lambda))=f(\lambda)$. Note that $|F(z)|$ is single valued in $\cD$, and the scalar product, by \eqref{218} and \eqref{eqhsp}, has the form
	$$
	\|f\|^2_{H^2(\nu^{-1})}=\frac 1 {\pi} \int_E\frac{|F(x+i0)|^2+|F(x-i0)|^2}{2}\prod_{j=1}^g\frac{x-c_j}{\sqrt{(x-a_j)(x-b_j)}}\frac{dx}{2\sqrt{x}}.
	$$
	
	Let $\phi_{\Omega}$ be the multivalued outer function in $\cD$ given by
	\begin{equation}\label{eq232}
		\phi_\Omega^{-2}=\frac 1 {2\cW}\prod_{j=1}^g\frac{z-c_j}{z-a_j}.
	\end{equation}
	Note that
	$$
	|\phi_\Omega|^{-2}=\frac{1}{2}\prod_{j=1}^g\frac{x-c_j}{x-a_j},\ x\in E,\ \text{and}\ \phi_{\Omega}\circ\gamma=\nu^{-1}(\g)
	\phi_{\Omega}
	$$
	(at this point we fix the half period for $\Gamma$, see Definition \ref{defj}, so that the function
	$\phi_{\Omega}$ and the form $\vpi$ generate the  mutually  inverse characters).
	Since
	$$
	\|f\|^2_{H^2(\nu^{-1})}=\frac 1 {\pi} \int_E\frac{|(F/\phi_{\Omega})(x+i0)|^2+|(F/\phi_{\Omega})(x-i0)|^2}{2}
	\Im\Omega(x) {dx},
	$$
	for $g\in A_1^2(\G)$ we obtain $g=G(z(\lambda))\phi_{\Omega}\vpi$, where $G\in H^2_{\Omega}$. Respectively,
	\begin{equation}\label{eq233}
		k_{Sz}(\l,\l_0)=K_{\Omega}(z(\l),z(\l_0))\phi_{\Omega}(z(\l))\vpi(\l)
		\overline{\phi_{\Omega}(z(\l_0))\vpi(\l_0)}.
	\end{equation}
	By \eqref{217} and \eqref{eq232} we have
	$$
	\vpi^2\phi_{\Omega}^2=-i\frac 1{\cW}\prod_{j=1}^g\frac{z-c_j}{\sqrt{(z-a_j)(z-b_j)}}\frac{1}{2\sqrt{-z}}
	\frac{dz}{d\lambda}\times 2{\cW}\prod_{j=1}^g\frac{z-a_j}{z-c_j}=-i\Omega(z)\frac{dz}{d\lambda}.
	$$
	Thus \eqref{repom} and \eqref{eq233} imply \eqref{eq229}.
	
	It remains to explain the extremal property of the fixed half period. Consider the following family of functions
	\begin{equation*}
		\Omega_{\e_1,...,\e_g}(z)=\Omega(z)
		\prod_{k=1}^g\left(\frac{z-b_k}{z-a_k}\right)^{\frac{1-\e_k}{2}},\quad 
		\e_k=\pm 1.
	\end{equation*}
	Similarly to Lemma \ref{l212}, we have $2^g$ Hilbert spaces $H^2_{\Omega_{\e_1,...,\e_g}}$ with the reproducing kernels
	$$
	K_{\Omega_{\e_1,...,\e_g}}(z,z_0)=\frac{-\Omega_{\e_1,...,\e_g}^{-1}(z)+
		\Omega_{\e_1,...,\e_g}^{-1}(\bar z_0)}{2(z-\bar z_0)}.
	$$
	We define collection of outer functions
	$$
	\psi_{\e_1,\dots,\e_g}^2=\prod_{k=1}^g\left(\frac{z-b_k}{z-a_k}\right)^{\frac{1-\e_k}{2}}
	$$
	whose characters (for the already fixed $\Gamma$) form all possible  $2^g$ half periods on this group.
	In this case
	$$
	A_1^2(\G,\fj)=\{g=G(z(\l))\vpi\phi_{\Omega}\psi_{\e_1,\dots,\e_g}:\ G\in H^2_{\Omega_{\e_1,\dots,\e_g}}\}.
	$$
	Thus, we have to compare the value
	$$
	k_{Sz}(\l_0,\l_0;\G, \fj)=\frac{\Im \Omega_{\e_1,\dots,\e_g}(z_0)}{2\Im z_0|\Omega_{\e_1,\dots,\e_g}(z_0)|}
	\left|\frac{dz}{d\lambda}(\lambda_0)\right|
	$$
	for all half periods $\fj$ to choose the minimal one. We use the exponential representation
	$$
	\Omega_{\e_1,\dots,\e_g}(z)=Ce^{\int_\bbR\frac{1+xz}{x-z}\chi_{\e_1,\dots,\e_g}(x) 
		\frac{dx}{1+x^2}}, \ C>0,
	\ \chi_{\e_1,\dots,\e_g}(x):=\frac  1\pi {\arg\Omega_{\e_1,\dots,\e_g}(x)},
	$$
	due to which
	$$
	\frac{\Im \Omega_{\e_1,\dots,\e_g}(z)}{|\Omega_{\e_1,\dots,\e_g}(z)|}
	=\sin (I_{\e_1,\dots,\e_g}(z)), \quad I_{\e_1,\dots,\e_g}(z)=\int_\bbR\frac{\Im z}{|x-z|^2}\chi_{\e_1,\dots,\e_g}(x) dx.
	$$
	The minimal value in the last expression corresponds to the minimum between two extreme values
	$$
	\min_{\e_k=\pm 1}\sin (I_{\e_1,\dots,\e_g}(z))=\min\{\sin (I_{+}(z)), \sin (I_{-}(z))\}, \ I_{\pm}(z):=I_{\e_k=\pm1,\forall k}(z).
	$$
	For these two we have
	$$
	{\sin (I_{-}(z))-\sin (I_{+}(z))}=2\sin\left\{\frac 1 2\int_{\cup_{k=1}^g(a_k,b_k)}\frac{\Im z dx}{|x-z|^2} \right\}
	\cos\left\{\frac 1 2 \int_{\bbR_+}\frac{\Im z dx}{|x-z|^2}\right\}.
	$$
	Due to
	$
	\int_{\bbR}\frac{\Im z dx}{|x-z|^2}= \pi,
	$
	we get $\sin (I_{+}(z))<\sin (I_{-}(z))$. That is,
	the configuration $\e_k=1, \forall k,$ corresponds to the global minimum.
	
	Note that this extremal choice  of $\e_k$ corresponds to the exceptional case when
	$$
	\Re \Omega_{\e_1,\dots,\e_g}(z)\ge 0
	$$
	in the upper half-plane and therefore for all $z\in \cD$. Therefore, $\Omega(z)-\Omega(\bar z_0)$ has $g+1$,  that is, the \textit{maximal possible} number of zeros,  $\{\bar z_k\}_{k=0}^g$, in $\cD$. Respectively,
	$$
	\frac{\Omega(z(\l))-\Omega(\bar z_0)}{ \Omega(z(\l))+\Omega(\bar z_0)}=\prod_{k=0}^g b_{\bar z_k}(\l)
	$$
	and we obtain \eqref{eq230}.
\end{proof}

To describe reproducing kernels in the general case we introduce the following notations and definitions, see \cite[Chap. IIIa]{MTata2}, see also \cite{SY97}.

\begin{definition}\label{defdiv}
	Let $\cR$ denote the hyperelliptic Riemann surface
	$$
	\cR=\left\{P=(z,\Omega):\ \Omega^2=-\frac{1}{z}\prod_{k=1}^g\frac{z-a_j}{z-b_j}\right\}
	$$
	and $\bar \cR$ be its compactification. The upper sheet means the collection of points $\{P=(z,\Omega):\ \Re\Omega>0\}$ and we identify it with the domain $\cD=\bbC\setminus E_S$, where $\Omega=\Omega(z)$ is well defined. Thus, for a generic point on $\cR$ we can write $(z,1)$, $z\in\cD$, having in mind a point on the upper sheet and $(z,-1)$, $z\in\cD$, for a point on the lower sheet. 
	Note that $(a_j,\pm 1)$ (respectively $(b_j,\pm 1)$) denotes the same point on $\bar\cR$. The collection of points
	of the form
	$$
	D=\{(x_j,\e_j):\ x_j\in[a_j,b_j], \ \e_j=\pm 1\}_{j=1}^g
	$$
	we call a divisor. Topologically, they form a $g$-dimensional torus $D_{\cR}$. To the given $D$ we associate a multivalued function in $\cD$ (a generalization of \eqref{eq232})
	\begin{equation}\label{eq235}
		\phi_D(z(\l))=\sqrt{\frac 1 2 \prod_{j=1}^g \frac{(z(\l)-x_j)b_{c_j}(\l)}{(z(\l)-c_j)b_{x_j}(\l)}}\prod_{j=1}^g b_{x_j}^{\frac{1+\e_j}2}(\l),
	\end{equation}
	which can be extended on $\cR$. In this case $D$ corresponds to zeros of $\phi_D$. Note that poles  correspond to the divisor $\{(c_j,-1)\}_{j=1}^g$. The character generated by this function is denoted by $\alpha_D$.
	
	Let $T(z)=z\prod_{j=1}^g(z-a_j)(z-b_j)$ and $U_D(z)=\prod_{j=1}^g(z-x_j)$ so that
	$$
	\frac{U_D(z)}{\sqrt{-T(z)}}=\Omega(z)\prod_{j=1}^g\frac{z-x_j}{z-a_j}.
	$$
	has positive imaginary part in the upper half-plane of the upper sheet.  Let
	\begin{equation}\label{eq236}
		m_{\pm}^{D}(z)=m_{\pm}(z):=\frac{-\sqrt{-T(z)}\pm V_D(z)}{U_{D}(z)},
	\end{equation}
	where the polynomial $V_D(z)$, $V_D(0)=0 $, of degree $g$ is uniquely defined by the condition that on $\bar\cR$ the function $m_+(z)$
	has poles exactly at points forming the divisor $D$ (and, by construction, at infinity). Let us point out that both functions have positive imaginary values in the upper half-plane, and $\overline{m_+(x)}=-m_-(x)$, $x\in E_S$. Respectively,
	$D_*:=\{(x_j,-\e_j)\}_{j=1}^g$ is the divisor, which generates $m_-(z)$, i.e., $m_-^D(z)=m_+^{D_*}(z)$. 
\end{definition}

\begin{theorem}\label{th24n} For an arbitrary character $\alpha\in \G^*$ there exists a unique divisor $D$ such that
	$\alpha=\alpha_D$ with the character generated by $\phi_D$ \eqref{eq235}.
	Let $H^2_{m_+^D}$ be the space of meromorphic  functions $F(z)$ in $\cD$ such that $F(z(\l))\phi_D(z(\l))$ is of Smirnov class equipped with the scalar product
	$$
	\|F\|^2_{m_+^D}=\frac{1}{\pi}\int_E\frac{|F(x+ i0)|^2+|F(x-i0)|^2}{2}\Im\frac{U_D(x)}{\sqrt{-T(x)}} dx.
	$$
	Then the reproducing kernel of this space is of the form
	\begin{equation}\label{repmp}
		K_{m_+^D}(z,z_0)=\frac{m^D_+(z)-m_+^D(\bar z_0)}{{2}(z-\bar z_0)}, \quad z_0\not=x_k.
	\end{equation}
	Consequently, the reproducing kernel of the space $A_{1}^2(\G,\alpha\nu)$ is of the form
	\begin{equation*}
		k_{Sz}(\l,\l_0;\G,\alpha\nu)=K_{m_+^D}(z(\l),z(\l_0))\phi_D(z(\l))\vpi(\l)\overline{\phi_D(z(\l_0))\vpi(\l_0)}.
	\end{equation*}
\end{theorem}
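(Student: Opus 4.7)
The plan is to imitate the proof of Theorem \ref{th211} almost verbatim, replacing the specific weight $\Omega$, its reciprocal $\Omega^{-1}$ and the outer multiplier $\phi_\Omega$ by their divisor-dependent counterparts $U_D/\sqrt{-T}$, $m_+^D$ and $\phi_D$. The argument decomposes into three steps: the bijection $\alpha\leftrightarrow D$, the reproducing kernel identity on $H^2_{m_+^D}$, and the transport of this result to $A_1^2(\Gamma,\alpha\nu)$ via a unitary isomorphism.

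For the first step, the function $\phi_D(z(\lambda))$ defined by \eqref{eq235} is, up to the half-integer exponents $\tfrac{1+\epsilon_j}{2}$, a product of character-generating Blaschke factors $b_{x_j}$ and $b_{c_j}$, so its character $\alpha_D$ is well defined. Surjectivity and injectivity of the map $D\mapsto\alpha_D$ follow from the classical Abel--Jacobi inversion theorem on the hyperelliptic curve $\cR$: both $D_{\cR}$ and $\G^*$ are real $g$-dimensional tori and the differential of the map is non-degenerate, compare the discussion in Subsection \ref{ss23}.

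For the second step I would follow the pattern of Lemma \ref{l212}. The algebraic input is that on $E$ the factor $\sqrt{-T(x)}$ is purely imaginary with opposite signs from above and below, so the boundary values satisfy $\overline{m_+^D(x+i0)}=-m_-^D(x+i0)=m_+^D(x-i0)$, and $\Im m_+^D(x+i0)=\Im(U_D(x)/\sqrt{-T(x)})$ with a consistent sign on $E$. Since $m_+^D$ is meromorphic on $\bar\cR$ with poles precisely at $D\cup\{\infty\}$, these poles lie either in the gaps or at infinity, and never on $E$ itself. Applying Cauchy's theorem to $F(x)K_{m_+^D}(x,z_0)\cdot(U_D(x)/\sqrt{-T(x)})$ along a contour hugging $E$ then produces $F(z_0)$ as a single simple-pole residue inside the contour, provided one checks that the contributions from the poles of $F$ admissible by the condition ``$F\phi_D$ is Smirnov'' are cancelled by the corresponding pole structure of $m_+^D$.

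Finally, multiplication by $\phi_D\vpi$ gives a unitary isomorphism from $H^2_{m_+^D}$ onto $A_1^2(\G,\alpha\nu)$: the Pommerenke identity \eqref{218} converts the density $\sum_\gamma \gamma'(\l)$ on the fundamental set $\bbE$ into Lebesgue measure, while $|\phi_D|^2$ transforms this into the weight $\Im(U_D/\sqrt{-T})$; the character calculus then ensures the target character is $\alpha\nu$. Applying this isomorphism to the kernel computed in Step 2 yields the product formula for $k_{Sz}(\l,\l_0;\G,\alpha\nu)$. The principal technical obstacle is the residue bookkeeping in Step 2: at an interior divisor point $x_j\in[a_j,b_j]$ the function $\phi_D$ has a simple zero on the appropriate sheet, so $F$ is allowed a simple pole there, but $m_+^D$ carries a compensating pole as well, and one must verify that these residue contributions, together with the ramification at the branch points $a_j,b_j$, cancel exactly, so that the only surviving residue inside the contour is the one at $z_0$.
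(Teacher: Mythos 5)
Your overall architecture --- Abel--Jacobi inversion for the bijection $\alpha\leftrightarrow D$, a Cauchy/residue argument modeled on Lemma \ref{l212} for the reproducing property, and transport to $A_1^2(\G,\alpha\nu)$ by multiplication by $\phi_D\vpi$ --- coincides with the paper's, and your Steps 1 and 3 are fine. The gap is in Step 2: the ``residue bookkeeping'' that you defer as the principal technical obstacle is precisely the whole content of the paper's proof, and the mechanism you sketch for it is not the one that works. First, the contour integrand is $F$ times the \emph{conjugate} of the kernel on $E$, and by the boundary relation you yourself quote, $\overline{K_{m_+^D}(x,z_0)}$ continues into $\cD$ as $\frac{-m_-^D(z)-m_+^D(z_0)}{2(z-z_0)}$; it is $m_-^D$, not $m_+^D$, that enters --- this is exactly why the paper emphasizes that the poles of $m_\pm^D$ complement each other in $\cD$. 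Second, the admissible simple poles of $F$ (at those divisor points $x_j$ with $\e_j=+1$, where $\phi_D$ vanishes) cannot be ``cancelled by a compensating pole of $m_+^D$'': a pole does not cancel a pole. What absorbs them is that the factor $\bigl(-m_-^D(z)-m_+^D(z_0)\bigr)\frac{U_D(z)}{\sqrt{-T(z)}}$ is analytic throughout $\cD$ (since $-m_-^D\,U_D=\sqrt{-T}+V_D$ is regular in the gaps) and \emph{vanishes} at exactly those $x_j$: there $m_-^D$ is finite (its poles sit at the complementary points of $D_*$) and $U_D(x_j)=0$. Third, the branch points contribute no residues at all, only integrable square-root singularities on the collapsing contour, so there is nothing there to ``cancel''. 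With these corrections the argument closes in one line, as in the paper: the only enclosed singularity is the simple pole at $z_0$, with residue $F(z_0)\,\frac{m_+^D(z_0)-m_-^D(z_0)}{2}\,\frac{U_D(z_0)}{\sqrt{-T(z_0)}}=F(z_0)$, because $m_+^D-m_-^D=2\sqrt{-T}/U_D$ by \eqref{eq236}.

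Two smaller points. The claimed identity $\Im m_+^D(x+i0)=\Im\bigl(U_D(x)/\sqrt{-T(x)}\bigr)$ on $E$ is false: writing $\sqrt{-T(x)}=is$ with $s$ real, the two sides are $-s/U_D(x)$ and $-U_D(x)/s$; they agree in sign (which is all that positivity of the norm needs) but not in value. And in the contour step one should at least mention the justification of the deformation at infinity (the weight decays like $|z|^{-1/2}$ and $F$ is of Smirnov class with $L^2$ boundary data); the paper glosses this as well, but your presentation of the branch points as a residue issue should be removed rather than repaired.
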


\begin{proof}
	[of Theorem \ref{th24n} and Theorem \ref{th28}]
	Existence and uniqueness of $D$ for the given $\alpha$ follows from Abel-Jacobi inversion theorem.
	Since poles of $m_{\pm}^D$ complement each other in $\cD$, see \eqref{eq236}, we get by the Cauchy theorem
	$$
	\langle F,K_{m_+^D}(\cdot,z_0) \rangle=\frac 1{2\pi i}
	\oint_E  \frac{-m_-^D(x)-m_+^D(z_0)}{2(x-z_0)}F(x)\frac{U_{D}(x)}{\sqrt{-T(x)}}dx= F(z_0).
	$$
	
	\begin{figure}[htbp] 
		\begin{center}
			\includegraphics[scale=0.35
			]
			{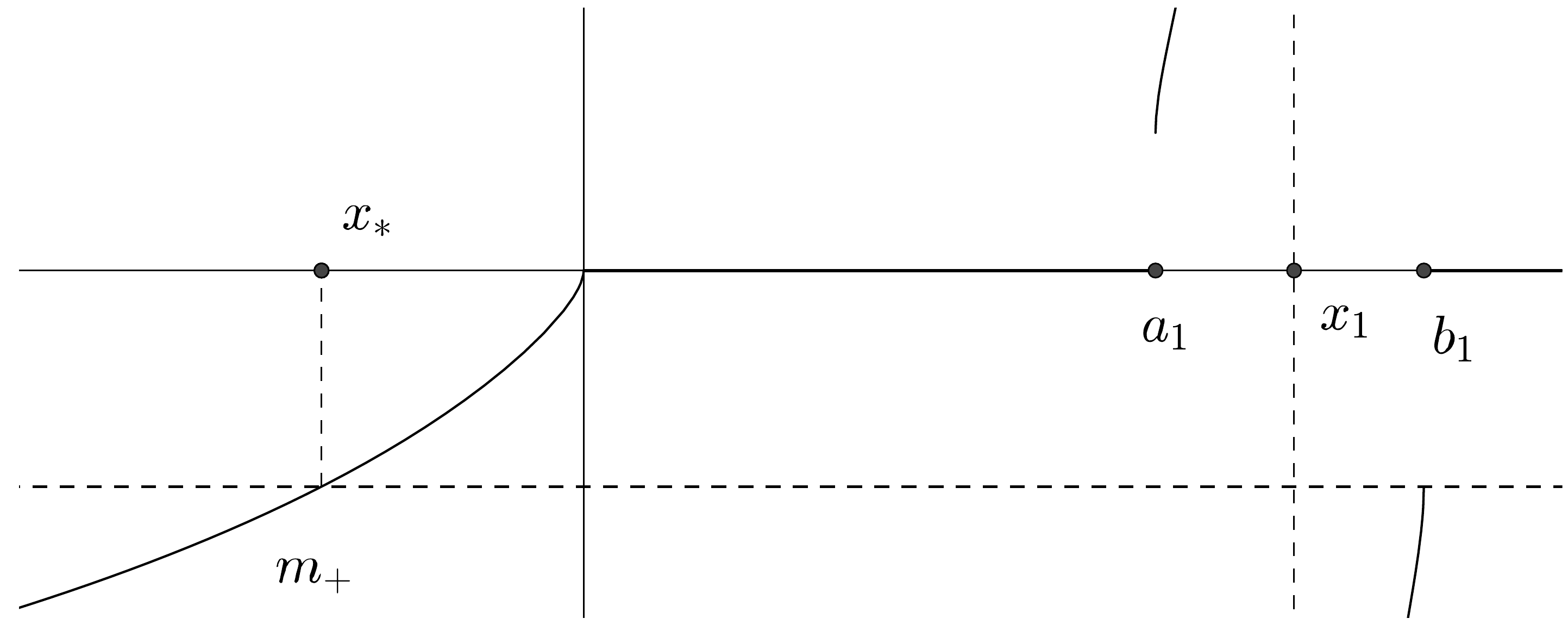}
			\caption{Change by a half period is required if $z_0\in(x_*,0)$}
			\label{mfunct}
		\end{center}
	\end{figure}
	It  remains to prove the last statement of Theorem \ref{th28}. Consider an elliptic case. 
	By \eqref{1eq227} we are interested in the reproducing kernel $k^\alpha_{Sz}(\lambda,\lambda_0)$ such that
	$\alpha^2=\beta$ and the corresponding $K_{m_+^D}(z,z_0)$ has zero in the gap $(a_1,b_1)$ (on the upper sheet).
	We use the representation \eqref{repmp}, assuming that $x_1\in(a_1,b_1)$ is a pole of $m_+^D$.
	Note that in this case $m_+^D(a_1)>0$ and $m_+^D(b_1)<0$.
	Therefore, see  a sketch of a graph of $m_+^D(z)$ in
	Fig. \ref{mfunct}, as soon as  $z_0<x_*$ the reproducing kernel $K_{m_+^D}(z,z_0)$ has indeed a zero in the gap $(a_1,b_1)$, but for $0>z_0>x_*$, $m_+^D(x)\not=m_+^D(z_0)$ for all $x\in[a_1,b_1]$ (the corresponding zero belongs to the lower sheet). Thus, in this range 
	a change of the half period is required (an extremal function corresponds to  another divisor $D$).
	
\end{proof}

\subsection{Ahlfors problem and Abel-Jacobi inversion}\label{ss23}  Recall that Abelian differentials on the Riemann surface $\cR$, see Definition \ref{defdiv}, form a $g$-dimensional linear space. We will fix a basis in this space in the form
\begin{equation}\label{l239}
	d\fw_k=\frac{Q_k(z)dz}{\sqrt{-T(z)}},\  \deg Q_k=g-1,\quad \int_{a_j}^{b_j}d\fw_k=\frac 1 2\delta_{k,j},\ k,j=1,\dots,g,
\end{equation}
where $\delta_{k,j}$ is the Kronecker symbol and integration is given along an interval on the upper sheet.
Note that $2\Re\int_0^zd\fw_k=\omega(z,E_k)$ in $\cD$, as before $\omega(z,F)$ denotes the harmonic measure in $\cD$, see \eqref{212}, and $E_k=E_S\setminus[0,a_k]$. Generally the Abel-Jacobi theorem sets a one-to-one correspondence between  the Jacobian variety Jac$(\cR)$ and the complex $g$-dimensional torus $\bbC^g/L_\cR$, where $L_{\cR}$ is the lattice generated by the matrix of periods of the Abelian integrals \eqref{l239}, see e.g. \cite[Chap. II, \S 2]{MTata1}.

Now, we are interested just in the \textit{real} part of the Abel-Jacobi inversion problem. Note that it plays a crucial role in the theory of finite gap self-adjoint/unitary operators, see e.g. \cite{AkhCongr, Apt84, Si11}. 
\begin{theorem}\label{lth25}
	Let us fix a base point $D_0=\{a_j\}_{j=1}^g$ in the collection of divisors $D_{\cR}$ on $\cR$ and define the map
	from $D_{\cR}$ to  the standard real torus $\bbR^g/\bbZ^g$ by
	\begin{equation}\label{laj}
		\alpha_k(D)=\sum_{j=1}^g\int_{a_j}^{(x_j,\e_j)}d\fw_k,  \ D=\{(x_j,\e_j)\}_{j=1}^g\in D_{\cR},\  
		\alpha=\{\alpha_k\}_{1}^g\in \bbR^g/\bbZ^g,
	\end{equation}
	with the integration along the interval on the lower or upper sheet depending on $\e_j$.
	This map is one-to-one.
\end{theorem}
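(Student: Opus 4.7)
The plan is to view the map as a smooth map between two compact $g$-tori and to verify it has degree one together with everywhere non-vanishing Jacobian; these two facts force it to be a homeomorphism.

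First I would check that the formula \eqref{laj} descends to a well-defined continuous map on $D_\cR$. On $\bar\cR$ the ramification points $(a_j,+1)$ and $(a_j,-1)$ are identified, and similarly for $(b_j,\pm1)$, so the factor $[a_j,b_j]$ of $D_\cR$ is a topological circle obtained by gluing the two sheets across the branch points. Using a continuous choice of path in the integral shows that the value of $\alpha_k$ at these identified endpoints agrees, so the map is well-defined and continuous.

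Next I would compute the effect of the map on the fundamental group by moving only $(x_j,\e_j)$ once around its circle: from $(a_j,+1)$ up the upper sheet to $(b_j,+1)=(b_j,-1)$, then back along the lower sheet to $(a_j,-1)=(a_j,+1)$. Because $d\fw_k$ changes sign between the sheets, the total increment of $\alpha_k$ over such a loop is
\[
\int_{a_j}^{b_j} d\fw_k+\int_{b_j}^{a_j}(-d\fw_k)=2\int_{a_j}^{b_j}d\fw_k=\delta_{kj}
\]
by the normalization \eqref{l239}. Hence the induced linear map on $\pi_1\simeq\bbZ^g$ is the identity, so the degree equals $1$.

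Then I would show local injectivity by a Jacobian computation. On the open part of $D_\cR$ where every $x_j$ lies strictly inside $(a_j,b_j)$,
\[
\frac{\partial \alpha_k}{\partial x_j}=\e_j\,\frac{Q_k(x_j)}{\sqrt{-T(x_j)}}\,\Bigg|_{\text{sheet}},
\]
so up to a non-vanishing real factor the Jacobian determinant equals $\det[Q_k(x_j)]_{k,j=1}^g$. Since the $Q_k$ form a basis of polynomials of degree $\le g-1$ and the $x_j$ lie in $g$ disjoint intervals, any nontrivial combination $\sum c_kQ_k$ has at most $g-1$ zeros, so this determinant is non-vanishing. A local holomorphic coordinate on $\bar\cR$ at the branch points handles the closed-up torus, yielding a local diffeomorphism on all of $D_\cR$. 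A local diffeomorphism between connected compact manifolds of equal dimension is a covering; combined with degree $1$ it must be a homeomorphism, proving the theorem.

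The main obstacle I anticipate is the careful handling at the branch-point endpoints: one has to check both that the map is differentiable in the right local coordinate and that the Jacobian remains non-zero there. The rest is the topological degree argument plus the linear-algebra fact about the Wronskian-type determinant of the normalized differentials, which are both straightforward given \eqref{l239}.
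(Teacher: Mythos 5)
Your proof is correct, but it follows a genuinely different route from the paper: the paper gives no proof of Theorem \ref{lth25} at all --- it is quoted as the classical real form of Abel--Jacobi inversion, with references --- and the inversion statement the paper does prove in this circle of ideas, Proposition \ref{propgj1}, is handled by pure function theory (existence from the extremal Blaschke product of Proposition \ref{lprop22}, uniqueness from uniqueness of the extremal function after factoring the inner part out of a reproducing kernel). Your argument is instead self-contained and differential-topological: the period computation $2\int_{a_j}^{b_j}d\fw_k=\delta_{kj}$ from \eqref{l239} shows the map induces the identity on $\pi_1$, hence has degree one, while the Jacobian $\det\bigl[\e_j Q_k(x_j)/\sqrt{-T(x_j)}\bigr]$ cannot vanish because a nontrivial combination $\sum_k c_kQ_k$ has degree at most $g-1$ and the $x_j$ lie in $g$ pairwise disjoint closed gaps; a local diffeomorphism of compact connected manifolds is a covering, and degree one forces a single sheet. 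The branch-point issue you flag does resolve exactly as you anticipate: writing $-T(z)=(z-a_j)R(z)$ and $t^2=z-a_j$, one has $d\fw_k=2Q_k(z)R(z)^{-1/2}\,dt$, so the map is real-analytic across the endpoints and the same zero-counting argument keeps the Jacobian nonzero there, since the $x_j$ remain distinct points of disjoint closed intervals (also note that well-definedness at $(b_j,\pm1)$ is only modulo $\bbZ$, the two values differing by the full period $\delta_{kj}$, which is all that is needed). What your approach buys is independence from both the complex Abel--Jacobi theorem and the Abrahamse-type extremal machinery; what the paper's function-theoretic route buys is that it transfers verbatim to the modified inversion of Proposition \ref{propgj1}, which is the version actually needed later, and your determinant computation is essentially the same device the authors deploy for the local solvability of Problem \ref{gaji}.
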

Evidently, \eqref{laj} can be rewritten as
\begin{equation}\label{2laj}
	\alpha_k=\sum_{j=1}^g \frac {\e_j} 2\omega(x_j,E_k)\  \mod 1.
\end{equation}

In the context of the Chebyshev problematics, we have a quite similar, but actually different inversion problem.
For a fixed $\beta$ and $x_0<0$ the extremal function from Proposition \ref{lprop22} has the form of the Blaschke product
\begin{equation}\label{mexist}
	w_{x_0,\beta}(\l)=\prod b_{x_j}(\lambda),
\end{equation}
and for its character we get
\begin{equation}\label{3laj}
	\beta_k=\sum_{j=1}^g  \omega(x_j,E_k) \mod 1.
\end{equation}
Thus, the relation between $\beta$ and the divisor $\{(x_j,1)\}_{j=1}^g$ can not be reduced exactly to the standard Abel-Jacobi inversion (Theorem \ref{lth25}). Indeed, directly from \eqref{3laj} we can only get
$$
\alpha_k=\sum_{j=1}^g \frac 1 2 \omega(x_j,E_k) \mod 1,
$$
where $2\alpha_k=\beta_k$ is still defined up to a half period. If one of the solutions $\{\oc\alpha_k\}$ is fixed, the whole collection is of the form $\{\oc\alpha_k+\frac{1-{\delta_k}}{4}\}_{k=1}^g$, $\delta_k=\pm 1$. On this way to find $x_j$'s, we have to solve \eqref{laj} for all $2^g$ possible collections $\{\delta_k\}_{k=1}^g$ and after that to choose among all $2^g$  solutions the divisor $D=\{(x_j,\e_j)\}_{j=1}^g$ with $\e_j=1$, see \eqref{2laj}.

Therefore, in our context Theorem \ref{lth25} has to be substituted by the following claim.
\begin{proposition}\label{propgj1}
	Let $I_j$ be the interval $[a_j,b_j]$ with the identification of the endpoints and equipped with the corresponding topology of the unit circle. Let $\cI=\times_{j=1}^g I_j$ be the topological $g$-dimensional torus.
	Then the map $\cI\to\bbR^g/\bbZ^g$ given by
	\begin{equation}\label{7laji}
		\beta_k(\fX)=\sum_{j=1}^g\omega(x_j,E_k),  \ \fX=\{x_j\}_{j=1}^g\in \cI,\  
		\beta=\{\beta_k\}_{k=1}^g\in \bbR^g/\bbZ^g,
	\end{equation}
	is a homeomorphism.
\end{proposition}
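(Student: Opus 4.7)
The plan is to establish the homeomorphism via three steps: verifying $\beta$ is a well-defined continuous map between compact oriented $g$-tori, checking that its Jacobian is non-degenerate on a dense open subset, and computing its topological degree to be one.

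First I will handle well-definedness and continuity on the quotient torus $\cI$. For each pair $j,k$ I need $\omega(b_j,E_k)-\omega(a_j,E_k)\in\bbZ$ so that $\beta_k$ descends from $\prod_j[a_j,b_j]$ to $\cI$. Since $\omega(\cdot,E_k)$ extends continuously through the gap $(a_j,b_j)\subset\cD$ to the regular boundary points $a_j,b_j\in\pd\cD$ with limit values $\chi_{E_k}(a_j)$ and $\chi_{E_k}(b_j)$, an inspection using $E_k=E_S\setminus[0,a_k]$ shows these differ by exactly $\delta_{jk}$. Continuity of $\omega(\cdot,E_k)$ on the gap interior then yields continuity of $\beta:\cI\to\bbR^g/\bbZ^g$, and moreover shows that $\beta$ induces the identity on $H_1$: traversing the $j$-th cycle of $\cI$ advances only the $j$-th coordinate of the target, by exactly one period.

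Next I will compute the Jacobian at an interior point $\fX\in\prod_i(a_i,b_i)$. Starting from $\omega(z,E_k)=2\Re\int_0^z d\fw_k$ with $d\fw_k=Q_k(z)\,dz/\sqrt{-T(z)}$, and noting that $-T(x)>0$ on each gap so $d\fw_k$ is real there, one gets $\pd\beta_k/\pd x_i=2Q_k(x_i)/\sqrt{-T(x_i)}$. Factoring column scalars gives
\[
\det\!\left[\frac{\pd \beta_k}{\pd x_i}\right]_{k,i=1}^{g}=\frac{2^g\det[Q_k(x_i)]}{\prod_{i=1}^g\sqrt{-T(x_i)}}.
\]
Since $\{Q_k\}_{k=1}^g$ is a basis of polynomials of degree $\le g-1$ and the $x_i$ lie in disjoint intervals, $\det[Q_k(x_i)]$ never vanishes on the interior, making $\beta$ a local diffeomorphism there.

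Finally I will pin down the topological degree by integrating the Jacobian. Expanding by the Leibniz rule and invoking the normalization $\int_{a_j}^{b_j}d\fw_k=\tfrac{1}{2}\delta_{jk}$ from \eqref{l239},
\[
\int_\cI\det\!\left[\frac{\pd \beta_k}{\pd x_i}\right]dx_1\cdots dx_g=2^g\!\!\sum_{\sigma\in S_g}\!\!\sgn(\sigma)\prod_i\int_{a_i}^{b_i}\frac{Q_{\sigma(i)}(x_i)\,dx_i}{\sqrt{-T(x_i)}}=2^g\bigl(\tfrac{1}{2}\bigr)^g=1,
\]
only $\sigma=\mathrm{id}$ surviving. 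Since the target has unit volume, this forces $\deg\beta=1$; together with the local diffeomorphism property, $\beta$ is a degree-one covering between connected $g$-tori and hence a homeomorphism.

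The main obstacle, bypassed by the global integral above, is the degenerate stratum where some $x_i$ hits the identified endpoint $a_i\sim b_i$: there the Jacobian formula becomes singular through $\sqrt{-T(x_i)}$, and a purely pointwise argument would need to verify that $\beta$ remains a local homeomorphism in the intrinsic smooth structure of the circle $I_i$. The integral approach sidesteps this, since $d\fw_k$ is integrable across each cycle and the boundary stratum has measure zero.
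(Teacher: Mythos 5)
Your route is genuinely different from the paper's (the paper gets existence in \eqref{7laji} from the extremal Blaschke product $w_{x_0,\beta}$ of Proposition \ref{lprop22} and uniqueness from uniqueness of that extremal function via the reproducing-kernel factorization), and the smooth part of your argument checks out: the jumps $\omega(b_j,E_k)-\omega(a_j,E_k)=\delta_{jk}$ give well-definedness, continuity and the identity on $H_1$; and on the open stratum $\prod_j(a_j,b_j)$ the Jacobian is $2^g\det[Q_k(x_i)]/\prod_i\sqrt{-T(x_i)}$ with $\det[Q_k(x_i)]=\det C\cdot\prod_{i<j}(x_j-x_i)\neq0$ (coefficient matrix times Vandermonde), of constant sign since the gaps are ordered.

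The gap is in the last step. ``Degree-one covering, hence homeomorphism'' requires $\beta$ to be a local homeomorphism at \emph{every} point of $\cI$, but your Jacobian argument gives this only on the open stratum; at a configuration where some $x_j$ sits at the identified endpoint $a_j\sim b_j$ you have shown nothing, and that is exactly the delicate point: the derivative blows up like $(x_j-a_j)^{-1/2}$, and the smooth substitution that removes the square root parametrizes the gap twice, i.e.\ produces the divisor torus $D_\cR$ of Theorem \ref{lth25}, not $\cI$. What degree one actually buys, given the constant-sign Jacobian on the open stratum, is that every target outside the image of the degenerate strata has exactly one preimage; targets inside that (measure-zero but nonempty) image could a priori have several preimages, so injectivity -- hence the homeomorphism claim -- does not follow, and the ``integral bypass'' does not repair this, since the integral only concerns the value of the degree, not the covering property. (A secondary point: the formula $\int_\cI\det D\beta=\deg\beta\cdot\mathrm{vol}$ is invoked for a map that is not $C^1$ across the strata; this is harmless because your $H_1$ computation already yields $\deg\beta=1$ by the homotopy classification of maps of tori, but the injectivity question over the degenerate strata remains open and is precisely what the paper's uniqueness-of-the-extremal-function argument supplies, with its caveat ``up to an identification of the gap endpoints.'')
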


\begin{proof}
	We fix $x_0$ in the complementary gap and for the given $\beta$ get an extremal Blaschke product  $w_{x_0,\beta}$ in the form \eqref{mexist}. Due to \eqref{3laj} we have existence  in \eqref{7laji}. 
	
	To get uniqueness, we note that any Blaschke product $w_{\beta}(\l)$ with the character $\beta$, of the form  \eqref{mexist} represents \textit{an inner part} of a reproducing kernel $k_{Sz}^{\alpha_0\beta}(\l,\l_0)$, $x_0=z(\l_0),$ for a certain $\alpha_0\in\G^*$. (Note in brackets, that the position of \textit{zeros of 
		reproducing kernels} deals precisely with the Abel-Jacobi inversion \eqref{laj}, see Theorem \ref{th24n}.)
	Since an outer part of a reproducing kernel is also a reproducing kernel we get
	$$
	k_{Sz}^{\alpha_0\beta}(\l,\l_0)=w_{\beta}(\lambda)\overline{w_{\beta}(\lambda_0)}k_{Sz}^{\alpha_0}(\l,\l_0).
	$$ 
	Thus, see \eqref{mineq},
	$$
	|w_{\beta}(\lambda_0)|^2=\inf_{\alpha\in \G^*}\frac{k_{Sz}^{\alpha\beta}(\l_0,\l_0)}{k_{Sz}^{\alpha}(\l_0,\l_0)}=
	\frac{k_{Sz}^{\alpha_0\beta}(\l_0,\l_0)}{k_{Sz}^{\alpha_0}(\l_0,\l_0)}.
	$$
	Due to  the uniqueness of the extremal function (in the normalization $w_{\beta}(\lambda_0)>0$) we get uniqueness in \eqref{7laji} (up to an identification of the gap endpoints).
\end{proof}

\section{Asymptotics: real case}

\begin{definition}
	We say that the comb $\Pi_J$ ($\Pi_S$) is $n$-regular ($\ell$-regular)  if $\frac{n}{\pi}\omega_k\in \bbZ$ for all $k$ (respectively,
	$\frac{\ell}\pi \omega_k\in\bbZ$).
\end{definition}

The Chebyshev alternation theorem and more general Markov's correc\-tions method allow to reveal the structure of the extremal polynomial or entire function in the real case. After that, it can be represented in terms of a conformal mapping on a suitable regular comb domain, see survey \cite{SY92}. 

\begin{theorem}\label{th31}
	Let  $x\in\bbR\setminus E_J$ ($x\in\bbR\setminus E_S$). For a given $n$ ($\ell$) there exists an $n$-regular ($\ell$-regular)  comb $\tilde\Pi_{J,n,x}$ ($\tilde \Pi_{S,\ell,x}$)
	such that the extremal function of Problem \ref{pJ} (Problem \ref{pS}) is given in terms of the corresponding conformal mapping
	\begin{equation}\label{eqrf}
		P_{n,x}(z)=\cos n\tilde\tau_{J,n,x}(z)\quad (F_{\ell, x}(z)=\cos\ell\tilde \tau_{S,\ell,x}(z))
	\end{equation}
	and
	\begin{equation*}
		A_n(x,E_J)=|P'_{n,x}(x)| \quad (A_\ell(x,E_S)=|F'_{\ell,x}(x)|).
	\end{equation*}
	In this case, the set $\tilde E_{J,n,x}$  ($\tilde E_{S,\ell,x}$), which corresponds to the base of the regular comb, 
	contains the initial set $E_J$ ($E_S$) and, on the other hand,  the original set contains the preimage of at least one of possibly two different points of the form
	$\frac {\pi k}{n}\pm 0$ ($\frac {\pi k}{\ell}\pm 0$) on the base of the regular comb for all $k=0,\dots, n$ ($k\in\bbZ_+$).
\end{theorem}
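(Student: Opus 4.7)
The plan combines the Chebyshev alternation theorem (in the Ahlfors-problem form) with Markov's correction method, following the scheme collected in \cite{SY92}. I describe the argument for Problem \ref{pJ}; Problem \ref{pS} is parallel after substituting entire functions of order $1/2$ and exponential type $\le\ell$ for polynomials of degree $n$, with the polynomial alternation replaced by Akhiezer's version for that class.

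First, a variational argument will show that the extremal polynomial $P_{n,x}$ attains $\pm 1$ on $E_J$ at $n+1$ touching points with alternating signs. Indeed, if it had fewer, the space of polynomials $q$ of degree $\le n$ with $q(x)=0$ (which has dimension $n$) would contain an element whose signs at all touchings match those of $P_{n,x}$; then $P_{n,x}+\e q\in\cP_n(E_J)$ for sufficiently small $\e>0$, while the sign of $q'(x)$ can be chosen so that $|(P_{n,x}+\e q)'(x)|>|P'_{n,x}(x)|$, contradicting extremality.

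Next, define the correction
\begin{equation*}
	\tilde E_{J,n,x}:=\{z\in\bbR:|P_{n,x}(z)|\le 1\},
\end{equation*}
which is a disjoint union of at most $n$ closed intervals (since $P_{n,x}$ is a real degree-$n$ polynomial) and contains $E_J$ by construction. The formula
\begin{equation*}
	P_{n,x}(z)=\cos\bigl(n\tilde\tau_{J,n,x}(z)\bigr)
\end{equation*}
then \emph{defines} the conformal map $\tilde\tau_{J,n,x}:\bbC_+\to\tilde\Pi_{J,n,x}$ with the normalizations \eqref{eq24}: at the endpoints of the intervals of $\tilde E_{J,n,x}$ one has $P_{n,x}=\pm 1$, so every vertex $\tilde\omega_k$ of $\tilde\Pi_{J,n,x}$ satisfies $\cos(n\tilde\omega_k)=\pm 1$, i.e.\ $n\tilde\omega_k/\pi\in\bbZ$, which is the claimed $n$-regularity.

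Finally, the preimage statement is a rereading of the alternation. The $n+1$ touching points produced in the first step distribute among the $n+1$ possible preimages $\pi k/n \pm 0$, $k=0,\dots,n$; distinct touchings land on distinct values of $k$, so by counting every $k$ is hit and at least one of the two preimages of $\pi k/n$ lies in $E_J$. The principal obstacle is the careful bookkeeping in Markov's correction: one must check that no alternation is absorbed into an added interval of $\tilde E_{J,n,x}\setminus E_J$, and handle the extreme values $k=0,n$ at the global endpoints of $\tilde E_{J,n,x}$ (where only one of the two one-sided preimages is meaningful). For Problem \ref{pS} the argument is identical in spirit, using the Paley--Wiener/Akhiezer class in place of polynomials, an infinite sequence of touching points distributed by exponential type $\ell$, and the regularity condition $\ell\tilde\omega_k/\pi\in\bbZ$.
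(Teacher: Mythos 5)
Your overall strategy (alternation plus Markov's correction, then reading off a regular comb) is the one the paper itself points to: Theorem \ref{th31} is stated there with a reference to \cite{SY92}, and the only detailed model proof given is for the counterpart on arcs, Problem \ref{pT}. But your write-up has genuine gaps at exactly the places where that model proof does real work. The opening claim --- that $P_{n,x}$ attains $\pm1$ on $E_J$ at $n+1$ points with alternating signs --- is neither established by your dimension count nor true in general. Your interpolation argument imposes $q(x)=0$, one condition per touching point, and one condition at $x$ for the derivative, so on polynomials of degree $\le n$ it can only rule out $\le n-1$ touchings, i.e.\ it gives $n$ points, not $n+1$, and it gives no alternation of signs at all (also the sign convention is reversed: to keep $\|P+\e q\|_{E_J}\le 1$ you need $q$ to oppose the sign of $P$ at the touchings, not match it). Moreover the claim itself fails for this constrained problem: for $n=1$, $E_J=[0,1]\cup[2,3]$, $x=1.4$, the extremal is $c(z-x)$ with a single touching point at $z=3$, and in general a whole gap of $E_J$ can be absorbed into $\tilde E_{J,n,x}$ (case (c) of Remark \ref{rm31}). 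Consequently your final counting ``distinct touchings land on distinct values of $k$, so every $k$ is hit'' cannot work as stated --- note in addition that two distinct touchings may be the two preimages $\pi k/n\pm 0$ of the same slit base, so they need not represent different $k$; the bookkeeping you defer to the end is precisely the nontrivial content, and in the paper it is what Remark \ref{rm31} and the harmonic-measure estimate (large $n$) are for.

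The second, more serious gap is the step where you say that $P_{n,x}(z)=\cos\bigl(n\tilde\tau_{J,n,x}(z)\bigr)$ ``defines'' the conformal map. For $\frac 1n\arccos P_{n,x}$ to be single valued on $\bbC_+$ and to map it conformally onto a comb one needs structural facts about the extremal polynomial: that (after a unimodular rotation and symmetrization) it is real, that all its zeros are real and simple, and in fact that all $2n$ roots of $P_{n,x}^2-1$, counted with multiplicity, are real; nothing in your proposal excludes complex zeros or complex critical points with critical values in $(-1,1)$, and without this the set $\{|P_{n,x}|\le1\}\cap\bbR$ carries no comb structure. This is exactly where the paper's proof for Problem \ref{pT} uses Markov-type correction factors (rational multipliers preserving the zero and the derivative at the prescribed point while strictly decreasing the maximum on $E$) to kill complex zeros, to force simplicity, and to show that between any two consecutive zeros there is a touching point lying in $E$; that last property is what simultaneously produces the $n$-regular comb, the preimage statement, and the quantitative bound \eqref{eq39} used later in the asymptotics. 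Your proof would need these correction arguments spelled out (for Problem \ref{pS} in the entire-function setting as well); as it stands, the passage from the extremal function to a regular comb is asserted rather than proved.
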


We will prove a counterpart of this theorem related to Problem \ref{pT}. For the Pell equation approach see \cite{L04}.
A comb $\Pi_T$ is called $n$-regular if $\frac{n}{2\pi}\omega_j\in\bbZ$ for all $j$, see Fig. \ref{comb}. 
\begin{theorem}
	Let  $e^{ix}\in\bbT\setminus E_T$. For a given $n$ there exists an $n$-regular comb $\tilde\Pi_{T,n,e^{ix}}$ such that the extremal function of Problem \ref{pT} is given in terms of the corresponding conformal mapping by
	\begin{equation*}
		P_{n,e^{ix}}(e^{iz})=e^{iz\frac n 2}\cos \frac n 2 \tilde\tau_{T,n,e^{ix}}(z)\quad
		\text{and}\quad
		A_n(e^{ix},E_T)=|P'_{n,e^{ix}}(e^{ix})|.
	\end{equation*}
	In this case, the set $\tilde E_{T,n,e^{ix}}$, which corresponds to the base of the regular comb, 
	contains the initial set $E_T$ and, on the other hand,  at least  one of  possibly two different points
	$e^{i\tilde\tau_{T,n,e^{ix}}^{-1}(\frac{2\pi k}{n}\pm0)}$ belongs to $E_T$ for all $k=0,\dots, n-1$.
\end{theorem}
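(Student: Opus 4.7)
The plan is to adapt the Chebyshev alternation / Markov correction method underlying Theorem \ref{th31} to the circular setting, the new feature being the factor $e^{inz/2}$ which compensates the half-winding of $\zeta^{n/2}$ around the unit disk. Existence of an extremal $P_{n,e^{ix}}\in\cP_n(E_T)$ with $P_{n,e^{ix}}(e^{ix})=0$ follows from the compactness of the constraint set and the continuity of $P\mapsto|P'(e^{ix})|$; one normalizes so that $P'_{n,e^{ix}}(e^{ix})>0$.

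Next I would establish the equi-oscillation structure on the circle. Observe that
$$
R(z):=|P_{n,e^{ix}}(e^{iz})|^2-1=e^{-inz}P_{n,e^{ix}}(e^{iz})P^*_{n,e^{ix}}(e^{iz})-1,\quad P^*(\zeta):=\zeta^n\overline{P(1/\bar\zeta)},
$$
is a real trigonometric polynomial of degree $n$ in $z$, hence has at most $2n$ real zeros per period counted with multiplicity. Since $R\le 0$ on $E_T$, interior zeros on $E_T$ are even-order tangencies and the arc-endpoints $e^{ia_j},e^{ib_j}$ contribute at most simple zeros. A Markov-type correction then shows: if the tangency count on $E_T$ is not saturated, one can construct a polynomial $S$ of degree $\le n$ with $S(e^{ix})=0$ so that $P_{n,e^{ix}}+\varepsilon S\in\cP_n(E_T)$ has larger $|P'(e^{ix})|$ for small $\varepsilon>0$, contradicting extremality. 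The saturated pattern defines $\tilde E_{T,n,e^{ix}}\supset E_T$ (adjoining any tangencies not already in $E_T$), and passing through the comb map $\tilde\tau=\tilde\tau_{T,n,e^{ix}}$ normalized as in \eqref{eq25} forces each slit abscissa to satisfy $n\omega_j/(2\pi)\in\bbZ$; the comb is $n$-regular.

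To conclude, I would verify the representation by setting $F(z):=e^{inz/2}\cos\bigl(\tfrac{n}{2}\tilde\tau(z)\bigr)$ and showing $F(z)=P_{n,e^{ix}}(e^{iz})$. By $n$-regularity the character of $e^{in\tilde\tau/2}$ is trivial (its value on the generator $\gamma_j$ equals $\mu_0(\gamma_j)^n=e^{in\omega_j}=1$), so $e^{\pm in\tilde\tau/2}$ is single-valued as a function of $\zeta=e^{iz}$; the $(-1)^n$-twists of $e^{inz/2}$ and of $\cos(\tfrac{n}{2}\tilde\tau)$ under $z\mapsto z+2\pi$ cancel. The asymptotics $\tilde\tau(iy)\simeq iy$ give a removable singularity of $F$ at $\zeta=0$ (value $1/2$) and the growth $F\sim\zeta^n/2$ at $\zeta=\infty$ (after Schwarz reflection through an arc of $\tilde E_T$), so $F$ extends to a polynomial of degree $n$ in $\zeta$ with $F(e^{ix})=0$ and $|F|\le 1$ on $\tilde E_T\supset E_T$. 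Uniqueness of the extremum then gives $F=P_{n,e^{ix}}$ and $A_n(e^{ix},E_T)=|P'_{n,e^{ix}}(e^{ix})|$. The main obstacle is the Markov correction step: unlike in the real case there is no signed Chebyshev alternation (polynomials on $\bbT$ are not real-valued), only an unsigned \emph{touching from below} of $|P|^2=1$, so the perturbation $S$ must be engineered to keep $\Re\{\overline{P(\eta_k)}S(\eta_k)\}\le 0$ at every tangency $\eta_k$ while raising $\Re\{\overline{P'(e^{ix})}S'(e^{ix})\}$; correlating the resulting dimension count with the combinatorial lattice condition $n\omega_j/(2\pi)\in\bbZ$ is the delicate part.
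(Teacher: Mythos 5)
There is a genuine gap, and you flag it yourself: the whole weight of the argument rests on the ``Markov-type correction'' that is supposed to force the saturated tangency pattern, and you never construct it --- your last sentence concedes that engineering $S$ with $\Re\{\overline{P(\eta_k)}S(\eta_k)\}\le 0$ at every tangency while increasing the derivative at $e^{ix}$ is the unresolved ``delicate part.'' The idea you are missing is the symmetrization the paper performs \emph{before} any correction: write $P(e^{iz})=e^{inz/2}F(z)$ with $F$ entire, $F(z+2\pi)=(-1)^nF(z)$, $F(x)=0$, normalize $F'(x)>0$, and replace $F$ by $\tfrac12\bigl(F(z)+\overline{F(\bar z)}\bigr)$ (equivalently $P$ by $\tfrac12(P+P^*)$), which is again admissible and has the same derivative at $e^{ix}$. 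This makes $F$ \emph{real on the real axis}, so the ``unsigned touching from below'' you worry about becomes a genuine signed alternation. The paper then (i) excludes zeros of $P$ off $\bbT$ and multiple zeros via the explicit correction $P(\zeta)\bigl(1-\delta\,\tfrac{(\zeta-e^{ix})(1-\zeta e^{-ix})}{(\zeta-e^{iz_0})(1-\zeta e^{-i\bar z_0})}\bigr)$, which keeps the zero and the derivative at $e^{ix}$ but strictly lowers $\max_{E_T}|P|$, and (ii) forces a point of $E_T$ with $|F|=1$ between any two consecutive (real, simple) zeros of $F$ via the factor $1-\delta\,\sin^2\tfrac{z-x}{2}\big/\bigl(\sin\tfrac{z-z_1}{2}\sin\tfrac{z-z_2}{2}\bigr)$. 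Only after these steps is $\tilde\tau=\tfrac 2n\arccos F$ well defined and a conformal map onto an $n$-regular comb. Your route through $R(z)=|P(e^{iz})|^2-1$ and zero counting never addresses zeros of $P$ off the circle and cannot by itself yield the representation $P(e^{iz})=e^{inz/2}\cos\tfrac n2\tilde\tau(z)$, since that formula already encodes the self-reciprocal symmetry of $P$ which you do not establish.

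A second gap is your closing step ``uniqueness of the extremum then gives $F=P_{n,e^{ix}}$'': uniqueness of the extremal polynomial is neither proved nor available, and the paper does not use it. Instead, the converse direction is argued via the Chebyshev theorem: the function built from an $n$-regular comb is a degree-$n$ polynomial whose points $e^{i\tilde\tau^{-1}(2\pi k/n\pm 0)}$, $k=0,\dots,n-1$, form a maximal Chebyshev alternation set, and any $E_T$ containing such a set makes this polynomial extremal. Your verification that the comb function produces a single-valued polynomial of degree $n$ (triviality of the character by $n$-regularity, behaviour at $\zeta=0$ and $\zeta=\infty$) is correct and parallels the paper, but it lands on the wrong final argument.
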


\begin{proof} WLOG we assume that $x=x_0\in (a_0,b_0)$. Let $P(\z)=P_{n,e^{ix_0}}(\z)$ be an extremal polynomial. We represent it in the form 
	\begin{equation}\label{eq32}
		P(e^{iz})=e^{iz\frac n 2}F(z),
	\end{equation}
	clearly $F(z)$ is a periodic entire function, $F(z+2\pi)=(-1)^{n}F(z)$, and $F(x_0)=0$. Since an extremal polynomial is given up to multiplication by a unimodular constant, we can assume that $F'(x_0)>0$. After that we can substitute $F(z)$ by its symmetric part $\frac 1 2(F(z)+\overline{F(\bar z)})$ and we still get an extremal polynomial $P(e^{iz})$. Thus, we can assume that $F(z)$ is real on the real axis in the representation \eqref{eq32}. 
	
	We claim that $F(z)$ has no complex zeros, or, equivalently $P(\z)$ does not have zeros in $\bbC\setminus \bbT$. Indeed, let $F(z_0)=0$, $\Im z_0>0$. Note that $F(\bar z_0)=0$. Consider
	$$
	Q(\z)=P(\z)\left(1-\delta\frac{(\z-e^{ix_0})(1-\z e^{-ix_0})}{(\z-e^{iz_0})(1-\z e^{-i\bar z_0})}\right),\ \delta >0.
	$$ 
	Note that $Q(\z)$ is a polynomial of degree $n$ such that 
	\begin{equation}\label{eq33}
		Q(e^{ix_0})=0\quad \text{and}\quad 
		Q'(e^{ix_0})=P'(e^{ix_0}).
	\end{equation}
	In the same time, for $\z\in\bbT$
	$$
	\frac{(\z-e^{ix_0})(1-\z e^{-ix_0})}{(\z-e^{iz_0})(1-\z e^{-i\bar z_0})}=
	\frac{|\z-e^{ix_0}|^2}{|\z-e^{iz_0}|^2}.
	$$
	That is, for a sufficiently small but positive $\delta$ we have 
	\begin{equation}\label{eq34}
		\max_{\z\in E_T}|P(\z)|>\max_{\z\in E_T}|Q(\z)|.
	\end{equation}
	Thus $P(\z)$ was not an extremal polynomial.
	
	In a similar way we prove that all zeros of $F(z)$ are simple. Now we prove that between two (necessary real) consequent zeros of this function, say $z_1,z_2$, there is  a point $y\in (z_1,z_2)$ such that $|F(y)|=1$ and $e^{iy}\in E_T$. Assuming that on the contrary 
	$\{e^{iy};y\in (z_1,z_2)\}\cap E_T=\emptyset$, or that
	$\max_{y\in(z_1,z_2)\cap E_T}|F(y)|<1$,  we define the polynomial
	$$
	Q(\z)=e^{i\frac n 2 z}G(z), \quad G(z)=F(z)\left(1-\delta \frac{\sin^2\frac{z-x_0}{2}}{\sin\frac{z-z_1}{2}\sin\frac{z-z_2}{2}}\right).
	$$
	On the period we have to consider three regions: $I_1$ is a union of  small vicinities of points $z_1$ and $z_2$; the interval  $I_2=(z_1+\e,z_2-\e)$; and the remaining set $I_3$. On $I_1$, $|G(z)|$ is strictly less than one if $\delta$ is small. In $I_2$ the factor in brackets is greater than one, but there is no restriction on $|G(y)|$ if no one of the points $e^{iy}$, $y\in I_2$, belongs to $E_T$. In the second case, 
	$\max_{z\in I_2} |F(z)|$ is a fixed value, which is  less than one. So, a small $\delta>0$ can be chosen such that the product $|G(z)|$ is still less than one. On the remaining part $I_3$, 
	$\max |G(z)|<1$ due to the chosen correction factor. Since  \eqref{eq33} and \eqref{eq34} hold, we get a contradiction. 
	Note that the case $z_1=x_0$ ($z_2=x_0$) requires special, but basically the same consideration.  
	
	We can refer to general theorems \cite{MarOst75, SY92}, or, having in mind periodicity of $F(z)$, just to count the number of $\pm 1$ points (including multiplicity) on a period to conclude that all such points are real. Thus 
	$$
	\tilde \tau_T(z)=\frac 2{n}\arccos F(z), \quad \tilde\tau_T(x_0)=\frac\pi n,
	$$
	is well defined in the upper half-plane. Making inspection of the boundary behaviour we conclude that this is a conformal mapping on a suitable comb $\tilde \Pi_T$. Moreover, this comb is $n$-regular, according to our definition. 
	
	Since $|F(z)|\le 1$ for $e^{iz}\in E_T$ and generally $|F(z)|\le 1$ if and only if $\tilde\tau_T(z)\in\bbR$, we obtain that
	$$
	E_T\subset\tilde E_T=\{e^{iz}: \tilde\tau_T(z)\in\bbR\}.
	$$
	Zeros of $F(z)$ correspond to $z_k: \tilde \tau_T(z_k)=\frac {\pi+2\pi k} n$, $k\in\bbZ$.
	Therefore,  between each consequent pair $(z_{k-1}, z_{k})$ there is  a point $e^{iy}\in E_T$ such that $F(y)=\pm 1$. If the boundary of the domain $\tilde \Pi_T$ contains  a slit  with the base at  $\tilde\omega_k=\frac{2\pi k}{n}$  then $y$ corresponds either to the left or right limit point. Otherwise, this $y$ corresponds to a single point  $\tilde \omega_k$ on the boundary of $\tilde \Pi_{T}$.  
	
	Conversely, if we have an $n$-regular comb $\tilde \Pi_T$ and $\tilde \tau_T(z)$ is the comb function with the normalization $\tilde\tau_T(x_0)=\frac \pi n$,  then 
	$$
	F(z)=\cos \frac n 2\tilde \tau_T(z)
	$$
	is an analytic function in the upper half-plane, and real-valued on the real axis. Being extended by the symmetry principle in the lower half-plane, it represents an entire function of exponential type $\frac n 2$. Also $F(z+2\pi)=(-1)^nF(z)$. Thus
	$P(\zeta)$ of the form \eqref{eq32} is a polynomial of degree $n$. Every set, which contains  one of  possibly two different points
	$e^{i\tilde\tau^{-1}(\frac{2\pi k}{n}\pm0)}$  for all $k=0,\dots, n-1$, forms the so called maximal Chebyshev set of alternation, see e.g. \cite{SY92}. Due to the Chebyshev theorem $P(\z)$ is an extremal polynomial on an arbitrary $E_T$ containing the given set of alternation.
\end{proof}

\begin{remark}\label{rm31}
	The set $\tilde E_{T}=\tilde E_{T,n,e^{ix_0}}$ represents an extension of the set $E_T$,
	$$
	\tilde E_{T}=E_T\cup\{e^{iz};\ z\in \cup_{j=0}^n[u_j,v_j]\}, \quad [u_j,v_j]\subset[a_j,b_j].
	$$
	A simple analysis shows that there are the following three possibilities of a proper extension in the gap 
	$(a_j,b_j)$:  for a suitable $k_j\in\bbZ$
	\begin{itemize}
		\item[(a)] $[u_j,v_j]$ is an internal subinterval: there are two slits,  $\tilde h_{k_j}>0$, $\tilde h_{k_j+1}>0$,
		and
		$$
		\tilde \tau_T(a_j)=\frac{2\pi k_j}{n}-0,\quad
		\tilde \tau_T(b_j)=\frac{2\pi (k_j+1)}{n}+0;
		$$
		\item[(b)] a one-sided extension, say, $u_j=a_j$, $v_j<b_j$: there is a slit, $\tilde h_{k_j+1}>0$, and 
		$$
		\tilde \tau_T(b_j)=\frac{2\pi (k_j+1)}{n}+0\ 
		\left(\tilde \tau_T(v_j)=\frac{2\pi (k_j+1)}{n}-0\right) \ 
		\text{and}\  \tilde \tau_T(a_j)\ge\frac{2\pi k_j}{n}+0;
		$$
		\item[(c)] the gap is completely closed $u_j=a_j, v_j=b_j$: 
		$$
		\frac{2\pi k_j}{n}+0\le \tilde \tau_T(a_j)<\tilde \tau_T(b_j)\le\frac{2\pi (k_j+1)}{n}-0.
		$$
	\end{itemize}
	Due to \eqref{213} the harmonic measure in the origin of each additional arc $\{e^{iz}: z\in[u_j,v_j]\}$ in $\tilde \cD=\bar\bbC\setminus \tilde E_{T}$ is not more than $\frac 1 n$. That is, the length of each additional arc tends to zero as $n\to\infty$. Thus, ($E_T$ is fixed and $x_0\in (a_0,b_0)$) for sufficiently big $n$ the case (c) is not possible, and we always have case (a) for the chosen gap $(a_0,b_0)$, since $x_0\in[u_0,v_0]$.
\end{remark}

Now we can pass to the limit in $n$.

\begin{theorem}\label{thrmain}
	As soon as $z_0=z(\lambda_0)$ is real, solutions of Problems \ref{pJ}, \ref{pT}, \ref{pS} are given by
	\begin{align}
		\lim_{n\to\infty }\left\{e^{-nG(z_0,\infty)}\left|\frac{dz}{d\l}(\l_0)\right|A_n(z_0,E_J)-\frac 1 2\cA(\l_0,\alpha_{J,n})\right\}
		=0,\\
		\lim_{n\to\infty }\left\{e^{-nG(e^{iz_0},\infty)}\left|\frac{d z}{d\l}(\l_0)\right|A_n(e^{iz_0},E_T)-\frac 1 2\cA(\l_0,\alpha_{T,n})\right\}
		=0,\\
		\lim_{\ell\to\infty }\left\{e^{-\ell M(z_0)}\left|\frac{dz}{d\l}(\l_0)\right|A_\ell(z_0,E_S)-\frac 1 2\cA(\l_0,\alpha_{S,\ell})\right\}
		=0,\label{mainre3}
	\end{align}
	where $G(z,z_0)$, $G(\z,\z_0)$ and $M(z)$ are Green and Martin functions in the corresponding domains, and the characters are defined on the generators $\gamma_j\in\G$ by
	$$
	\alpha_{J,n}(\g_j)=e^{2i\omega_j n},\quad
	\alpha_{T,n}(\g_j)=e^{i\omega_j n},\quad
	\alpha_{S,\ell}(\g_j)=e^{2i\omega_j \ell},
	$$
	with $\omega_j$ corresponding to the combs $\Pi_J$, $\Pi_T$, $\Pi_S$ respectively, see Fig. \ref{comb}.
\end{theorem}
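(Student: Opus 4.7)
My plan is to handle Problems~\ref{pJ}, \ref{pT}, and \ref{pS} in parallel, since the Chebyshev-alternation structure provides a uniform framework. First, by Theorem~\ref{th31} and its newly established arc analogue, the extremal functions take the forms $P_{n,z_0}=\cos(n\tilde\tau_n)$ (with the twist $e^{izn/2}$ in the $E_T$ case) and $F_{\ell,z_0}=\cos(\ell\tilde\tau_\ell)$, where $\tilde\tau$ is the conformal map onto the associated $n$-regular (resp.\ $\ell$-regular) comb with base $\tilde E\supset E$. The vanishing at $z_0$ forces $\lvert\sin\rvert=1$, giving $A_n(z_0,E_J)=n\lvert\tilde\tau_n'(z_0)\rvert$ with obvious analogues in the other two cases.

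Next, I lift everything to the universal covering $z:\bbC_+\to\cD$. The lifted Blaschke product $\tilde b(\lambda):=e^{i\tilde\tau(z(\lambda))}$ is multivalued on $\cD$ with character on $\Gamma$; the key observation (the ``simple relation'' between the Green/Martin functions of $\cD$ and its $n$-regular enlargement alluded to in the introduction) is that the regularity $n\tilde\omega_j/\pi\in\bbZ$ (resp.\ $n\tilde\omega_j/(2\pi)\in\bbZ$, $\ell\tilde\omega_j/\pi\in\bbZ$) forces $\tilde b^n$ to descend to a character-automorphic function on the covering of $\cD$ whose character on the generators is exactly $\alpha_{J,n}$, $\alpha_{T,n}$, or $\alpha_{S,\ell}$, because $\tilde\omega_j\to\omega_j$ as the added arcs shrink and the loops of $\tilde\cD_n$ are homotopic in $\cD$ to those of $\cD$. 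Remark~\ref{rm31} moreover yields Carathéodory convergence $\tilde\cD_n\to\cD$ and hence $\tilde b_n\to b_\infty$ locally uniformly.

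For the upper bound I consider the test function
\begin{equation*}
W_n(\lambda):=\tilde b^n(\lambda)\,P_n(z(\lambda))\in H^\infty(\alpha_{J,n}),
\end{equation*}
which vanishes at $\lambda_0$ (since $P_n(z_0)=0$) and satisfies $\|W_n\|\le 1$ on $\bbR$ because $|\tilde b|=1$ and $|P_n|\le 1$ there. Applying Problem~\ref{pgA} gives
\begin{equation*}
e^{-n\tilde G_n(z_0,\infty)}\,|z'(\lambda_0)|\,A_n(z_0,E_J)=|W_n'(\lambda_0)|\le\cA(\lambda_0,\alpha_{J,n}).
\end{equation*}
Converting the prefactor $e^{-n\tilde G_n}$ to the desired $e^{-nG}$ uses the quantitative shrinking of the added arcs (Remark~\ref{rm31}), and the sharp constant $\tfrac12$ arises from the expansion $P_n=\tfrac12(\tilde b^n+\tilde b^{-n})$: on compacts of $\bbC_+$ the first summand is negligible and the dominant $\tfrac12\tilde b^{-n}$ produces the factor $\tfrac12$, in the same spirit as Widom's Theorem~11.5 of \cite{Wid69} to which Theorem~\ref{thrmain} reduces in the real case.

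For the matching lower bound I use the extremal Ahlfors function $w=w_{\lambda_0,\alpha_{J,n}}$ of Proposition~\ref{lprop22} — a Blaschke product of $g+1$ complex Green functions — and build a polynomial candidate via the real-symmetric combination
\begin{equation*}
Q(\lambda):=\tfrac12\bigl(\tilde b^{-n}(\lambda)\,w(\lambda)+\overline{\tilde b^{-n}(\overline\lambda)\,w(\overline\lambda)}\bigr),
\end{equation*}
which descends to $\cD$ because the characters of $\tilde b^n$ and $w$ cancel and is real on $\bbR$. I expect the \emph{main obstacle} to be verifying that $Q$ is indeed a polynomial in $z$ of degree at most $n$ (by zero-counting on a fundamental domain, or by reusing the Chebyshev-alternation argument from the proof of the $E_T$ analogue of Theorem~\ref{th31}) whose sup-norm on $E$ is $1+o(1)$. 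The latter point is delicate: one must control $\tilde b-b_\infty$ uniformly on the shrinking added arcs $\tilde E_n\setminus E$, which is precisely where the $O(1/n)$ harmonic-measure estimate of Remark~\ref{rm31} is essential. Once admissibility is secured, evaluating $|Q'(z_0)|$ against $|w'(\lambda_0)|=\cA(\lambda_0,\alpha_{J,n})$ yields the matching lower bound and hence the claimed asymptotic equality; the same argument, with the corresponding character $\alpha_{T,n}$ or $\alpha_{S,\ell}$, covers the other two problems.
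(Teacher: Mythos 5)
Your starting point (Theorem \ref{th31} and the comb representation of the extremal functions) coincides with the paper's, but the two-sided sandwich you then set up does not close, and two of its key steps are wrong as written. First, the character bookkeeping: by $n$-regularity the periods of $\tilde b^{\,n}=e^{in\tilde\tau_n\circ z}$ around the added slits are multiples of $2\pi$, so $\tilde b^{\,n}$ (like $P_n\circ z$ itself) is automorphic with \emph{trivial} character; your $W_n=\tilde b^{\,n}P_n(z(\cdot))$ therefore lies in $H^\infty(1_{\G^*})$, not in $H^\infty(\alpha_{J,n})$. The correct multiplier is the complex Green/Martin function of the \emph{original} domain, $b_\infty^n$ resp.\ $e^{i\ell\cM}$, which is what the paper uses and which also produces the prefactor $e^{-nG(z_0,\infty)}$ (resp.\ $e^{-\ell M(z_0)}$) with no conversion step; for the same reason the claim that ``the characters of $\tilde b^{\,n}$ and $w$ cancel'' is false. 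Second, and more fundamentally, even with the corrected test function your upper bound reads $e^{-nG(z_0,\infty)}|z'(\l_0)|A_n\le\cA(\l_0,\alpha_{J,n})$, which is off by the factor $2$ from the assertion, and your lower-bound candidate $Q=\tfrac12\bigl(\tilde b^{-n}w+\overline{\tilde b^{-n}(\bar\l)w(\bar\l)}\bigr)$ is vacuous in the real-symmetric situation at hand: after the natural real normalization one has $\overline{f(\bar\l)}=f(\l)$, so $Q=f$, which is not a polynomial — the symmetrization that produces a polynomial is the one across $E$ (the second sheet of the double, of the type $\tfrac12(\Psi+\Psi^{-1})$ hidden in $\cos n\tilde\tau$), not across $\bbR$. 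Even if both bounds were repaired, pinning the quantity between $\tfrac12\cA$ and $\cA$ does not prove the theorem.

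The missing idea is the exact identification of the limit, which is how the paper gets the constant $\tfrac12\cA$. One fixes a subsequence along which $\alpha_{S,\ell_k}\to\beta$ and along which the rescaled extremals $F_{\ell_k}(z(\l))e^{i\ell_k\cM(\l)}$ converge, and shows via the Green-potential identity \eqref{eq38} together with the estimate \eqref{eq39} of Remark \ref{rm31} that $\ell_k(\tilde M_{\ell_k}-M)\to-\sum_{j=0}^g G(\cdot,x_j)$, where the $x_j$ are the limits of the shrinking added intervals; hence the limit $w$ satisfies $|w|=\tfrac12\prod_{j=0}^g|b_{x_j}|$ and has character $\beta=\prod_j\mu_{x_j}$. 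Your Carath\'eodory-convergence remark only yields $\tilde M_\ell-M\to0$, an order of magnitude too weak to see the factor $\prod_j|b_{x_j}|$. Finally — and this is the step your proposal never touches — Proposition \ref{propgj1} (the modified Abel--Jacobi inversion, including uniqueness) guarantees that \emph{any} Blaschke product $\prod_{j=0}^g b_{x_j}$ with this character is the extremal function of Problem \ref{pgA}, so $|w'(\l_0)|=\tfrac12\cA(\l_0,\beta)$ exactly; since $\beta$ was an arbitrary limit point of the characters, the difference in each display tends to $0$. Without this identification the value $\tfrac12\cA(\l_0,\alpha_{\cdot,n})$ cannot be reached from either side.
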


\begin{proof}
	We give a proof for entire functions, other cases are quite similar. Let $F_{\ell,x_0}(z)$, $\ell>0$, $x_0<0$, be the extremal function, see Theorem \ref{th31}. 
	Using compactness of $\Gamma^*$ we choose a  convergent sequence 
	\begin{equation}\label{eqchar3}
		\beta=\beta(\{\ell_k\})=\lim_{k\to\infty}\alpha_{S,\ell_k}.
	\end{equation}
	Using compactness of  the family 
	$$
	\{F_{\ell,x_0}(z(\l))e^{i\ell\cM(\l)}\}_{\ell>0},
	$$
	we choose an arbitrary subsequence of $\{\ell_k\}$ (but keeping the same notation) so that the following limit exist
	\begin{equation}\label{eq390}
		w(\l)=w(\l;\{\ell_k\})=\lim_{k\to\infty}F_{\ell_k,x_0}(z(\l))e^{i\ell_k\cM(\l)}.
	\end{equation}
	
	Now, consider the Martin function $\tilde M(z)=\tilde M_{\ell,x_0}(z)=\Im\tilde\tau_{S,\ell,x_0}$ of the domain $\bbC\setminus \tilde E_{S,\ell, x_0}$ in the original domain $\cD$. It is harmonic in the complement to the additional intervals 
	$\cup_{j=0}^g[u_j,v_j]=\tilde E_{S,\ell, x_0}\setminus E_S$. It is continuous in the whole $\cD$, but its normal derivative  has jumps on the union of these intervals. Recall that $\tilde\tau(x)=\tilde \tau_{S,\ell, x_0}(x)$ is real valued on $\tilde E_{S,\ell, x_0}$. By the 
	Cauchy-Riemann equations  we have
	\begin{equation*}
		\frac{\pd \tilde M}{\pd y}(x)=\frac{d\tilde\tau(x)}{dx}, \quad x\in \tilde E_{S,\ell, x_0}\setminus E_S.
	\end{equation*}
	Thus, in terms of the Green function $G(z,z_0)$ of $\cD$, we obtain
	\begin{equation}\label{eq38}
		\tilde M_{S,\ell, x_0}(z)=M(z)-\frac 1 \pi \int_{\tilde E_{S,\ell, x_0}\setminus E}G(z,x)d\tilde \tau_{S,\ell, x_0}(x).
	\end{equation}
	According to Remark \ref{rm31}, for a sufficiently large $\ell$, each additional interval is of the form (a) or (b) and we have
	\begin{equation}\label{eq39}
		\tilde \tau_{S,\ell, x_0}(u_j)-\tilde \tau_{S,\ell, x_0}(v_j)
		\begin{cases}
			= \frac{\pi}\ell,& \text{case (a)},\\
			\le \frac{\pi}\ell,& \text{case (b)}.
		\end{cases}
	\end{equation}
	Also recall that as $\ell\to\infty$ the $\ell$-depending system of intervals $[u^{(\ell)}_0,v^{(\ell)}_0]$ shrinks to the point $x_0$.
	We again choose a subsequence, keeping the same notations, so that
	$$
	\lim_{m\to\infty}u^{(\ell_{k})}_j=\lim_{m\to\infty}v^{(\ell_{k})}_j=x_j, \quad j=1,\dots,g,
	$$
	for some $x_j\in[a_j,b_j]$.
	Since $G(z,x)$ is continuous in $x$ and  $G(z,a_j)=G(z,b_j)=0$,
	by \eqref{eq38} and \eqref{eq39}, we obtain
	\begin{equation}\label{eq40}
		\lim_{m\to\infty}\ell_{k}(\tilde M_{S,\ell_{k}, x_0}(z)-M(z))=-\sum_{j=0}^g G(z,x_j).
	\end{equation}
	Now we go back to \eqref{eq390}. For $z=z(\l)$, by \eqref{eqrf}, we have
	$$
	|w(\l)|=\lim_{k\to\infty}e^{\ell_{k}(\tilde M_{\ell_{k}, x_0}(z)-M(z))}
	\frac{\left|1+e^{2i\ell_{k}\tilde\cM_{\ell_{k}, x_0}(\l)}\right|} 2
	=\frac 1 2 \prod_{j=0}^g|b_{x_j}(\l)|,
	$$
	and  $\beta=\prod_{j=0}^{g}\mu_{x_j}$. 
	Since $\{x_j\}$ with a suitable identifications, see Proposition \ref{propgj1}, corresponds to the extremal function of Problem \ref{pgA} for the given $\beta$, we conclude that
	$$
	\lim_{k\to\infty } e^{-\ell_k M(z_0)}\left|\frac{dz}{d\l}(\l_0)\right|A_{\ell_k}(z_0,E_S)=\frac 1 2\cA(\l_0,\beta)
	$$
	along the \textit{original} sequence $\{\ell_k\}$.
	Since $\beta$ is an arbitrary character of the form \eqref{eqchar3}, we get \eqref{mainre3}.
	
\end{proof}

\section{To make it complex}
To complement proofs given in the previous section, here we will discuss    extremal polynomials $P_n(z)=P_{n,z_0}(z)$  of Problem \ref{pJ}.
Obviously, $e^{ic}P_n(z)$, $c\in\bbR$, is also an extremal polynomial. So, a dual setting of the problem is the following: we fix an arbitrary non zero value of the derivative at $z_0$ and look for a polynomial $\tilde P_n(z)$ with the smallest maximum norm $\|\tilde P_n\|$ on $E_J$. In this case, $P_n(z)=\frac{\tilde P_n(z)}{\|\tilde P_n\|}$.
Thus, due to the Kolmogorov criterion, see e.g. \cite{Akh53}, $P_n(z)$ is extremal if and only if
\begin{equation}\label{al2-1}
	\inf_{x\in E_J:\ |P_n(x)|=1}\Re (x-\bar z_0)^2P_n(x)\overline{Q_{n-2}(x)}\le 0
\end{equation}
for an arbitrary polynomial $Q_{n-2}$ of degree $n-2$. 

Let
$$
\frac{z-\bar z_0}{z-z_0}P_n(z)=\Phi_n(z)+i\Psi_n(z), \quad Q_{n-2}(z)=\cX_{n-2}(z)+i\cY_{n-2}(z)
$$
be the decompositions of the corresponding  polynomials into the real and imaginary parts. Then \eqref{al2-1} is of the form
\begin{equation*}
	\inf_{x\in E_J:\ \Phi_n(x)^2+\Psi_n(x)^2=1}\{\Phi_n(x) \cX_{n-2}(x)+\Psi_n(x) \cY_{n-2}(x)\}\le 0.
\end{equation*}
Due to the symmetry properties it is enough to solve Problem \ref{pJ} for $z_0$ in the upper half-plane $\bbC_+$.
Evidently in this case $P_n(z)$ has all zeros, except for $z_0$, in the lower half-plane and
$$
\displaystyle
\left|\frac{i+\frac{\Psi_n(z)}{\Phi_n(z)}}{{i-\frac{\Psi_n(z)}{\Phi_n(z)}}}\right|\le 1, \quad z\in\bbC_+.
$$
In other words $-\frac{\Psi_n(z)}{\Phi_n(z)}$ has positive imaginary part in the upper half-plane. Due to the well known property zeros of these two polynomials interlace.

\begin{lemma}\label{4l22}
	Assume that  for two real polynomials with
	$-\Im \frac{\Psi_n(z)}{\Phi_n(z)}\ge 0$, $z\in\bbC_+$, the    set
	$X=\{x\in E_J:\ \Phi_n(x)^2+\Psi_n(x)^2=1\}$ coincides with the collection of all zeros of $\Phi_n(x)$. If
	$$
	\Phi_n(x)^2+\Psi_n(x)^2\le 1, \ x\in E_J,
	$$
	then 
	$$
	P_n(z)=\frac{z-z_0}{z-\bar z_0}(\Phi_n(z)+i\Psi_n(z))
	$$
	is an extremal polynomial of Problem \ref{pJ} for
	an arbitrary zero $\bar z_0$ of the complex polynomial $\Phi_n(z)+i\Psi_n(z)$.
\end{lemma}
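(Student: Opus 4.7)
The strategy is to verify the Kolmogorov criterion \eqref{al2-1} directly. As already derived in the paragraph preceding the lemma, this criterion amounts to showing that for every real polynomial pair $(\cX_{n-2},\cY_{n-2})$ of degree at most $n-2$ one has
\[
\inf_{x\in X}\bigl\{\Phi_n(x)\cX_{n-2}(x)+\Psi_n(x)\cY_{n-2}(x)\bigr\}\le 0.
\]
Since by hypothesis $X$ coincides with the zero set of $\Phi_n$, the first summand vanishes identically on $X$, and the task reduces to: for every real polynomial $\cY_{n-2}$ of degree at most $n-2$ there exists $x\in X$ with $\Psi_n(x)\cY_{n-2}(x)\le 0$.

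I would next extract sign alternation on $X$ from the Herglotz--Nevanlinna hypothesis on $-\Psi_n/\Phi_n$. A rational function with nonnegative imaginary part on the upper half-plane has only simple real poles, and its numerator changes sign at each pole. Hence the zeros $x_1<\dots<x_N$ of $\Phi_n$ are real and simple, and $\Psi_n$ alternates in sign on this list. Combined with the equality $\Phi_n^2+\Psi_n^2=1$ on $X$, which at a zero of $\Phi_n$ forces $|\Psi_n(x_k)|=1$, we conclude $\Psi_n(x_k)=\varepsilon(-1)^k$ for a fixed $\varepsilon\in\{\pm 1\}$.

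If the Kolmogorov inequality failed for some admissible $\cY_{n-2}$, then $\Psi_n(x_k)\cY_{n-2}(x_k)>0$ for every $k$, i.e.\ $\cY_{n-2}$ takes the strictly alternating signs $\varepsilon(-1)^k$ at $N$ ordered real points, so it must change sign in each of the $N-1$ open intervals $(x_k,x_{k+1})$ and therefore have at least $N-1$ real zeros. Now $\Phi_n+i\Psi_n$ has degree $n$ (the M\"obius factor $(z-\bar z_0)/(z-z_0)$ applied to the degree-$n$ polynomial $P_n$ yields another polynomial of degree $n$, thanks to $P_n(z_0)=0$), and in the generic normalization $\deg\Phi_n=n$, giving $N=n$. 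This forces $\deg\cY_{n-2}\ge n-1$, contradicting $\deg\cY_{n-2}\le n-2$. Hence no such $\cY_{n-2}$ exists, the Kolmogorov criterion is satisfied, and $P_n$ is extremal. The main obstacle I anticipate is the degenerate normalization in which $\deg\Phi_n<n$: there the bare count gives only $N=n-1$, which is insufficient to force the contradiction. I would handle this either by exploiting the rotational freedom $P_n\mapsto e^{i\theta}P_n$ to place the leading coefficient in general position, or by showing directly that the combined hypotheses $X=\{\Phi_n=0\}$ and $\Phi_n^2+\Psi_n^2\le 1$ on $E_J$ already force $\deg\Phi_n=n$ (any defect in degree produces extra contact points outside $X$ or an inconsistency with the Herglotz interlacing).
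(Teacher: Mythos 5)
Your argument is essentially the paper's own proof: reduce via the Kolmogorov criterion to showing that $\Psi_n\cY$ cannot be strictly positive on all of $X$, use the Herglotz/interlacing property to get sign alternation of $\Psi_n$ at the (real, simple) zeros of $\Phi_n$, and conclude that such a $\cY$ of degree $n-2$ would need $n-1$ real zeros and hence vanish identically. The degree issue you flag at the end is precisely what the paper leaves implicit (it simply counts $n-1$ sign changes, i.e.\ takes $\Phi_n$ to have $n$ simple zeros in $E_J$, as holds in the intended application \eqref{aleq3}), so apart from stating that assumption no further repair is needed.
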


\begin{proof}
	In this case, due to the Kolmogorov criterion we have to check that
	$$
	\inf_{x\in X}\Psi_n(x) \cY(x)\le 0
	$$
	for an arbitrary real polynomial $\cY(x)$, $\deg \cY=n-2$. Assume that there exists $\cY(x)$, which violates this property. Since zeros of $\Phi_n(z)$ and $\Psi_n(z)$ interlace, and $\Psi_n(x) \cY(x)>0$ for all $x\in X$, the polynomial $\cY(z)$ has $n-1$ zeros. That is, $\cY(z)$ is zero identically, and we get a contradiction.
\end{proof}

Let $\tilde E_{J,n}$ be an $n$-regular extension for the given set $E_J$ and $\tilde \tau_n$ be the corresponding comb-function. We define associated polynomials $\Phi_n$ and $\Psi_n$ by
\begin{equation}\label{aleq3}
	e^{-i n\tilde\tau_n(z)}=\cos n\tilde\tau_n(z)-
	i\sin n\tilde\tau_n(z)=\Psi_n(z)+
	\sqrt{
		\prod_{j=0}^g\frac{(z-u_j)(z-v_j)}{(z-a_j)(z-b_j)}
	}\Phi_n(z),
\end{equation}
and note that their zeros interlace.

\begin{theorem}\label{4th25}
	Assume that for the given extension $\tilde E_{J,n}$
	\begin{equation*}
		\tilde\rho_n^2:=-\sup_{x\in E_J}\prod_{j=0}^g\frac{(x-u_j)(x-v_j)}{(x-a_j)(x-b_j)}>0.
	\end{equation*}
	Let $\cZ_n(\rho)=\{z_j\}_{j=0}^g$ be the collection of points conjugated to the   zeros of
	\begin{equation}\label{al4}
		{\fp_n(z,\rho)}=\rho\Phi_n(z)+i\Psi_n(z),\quad \rho^2< \tilde\rho_n^2.
	\end{equation}
	Then 
	$$
	P_{n,z_j}(z)=\frac{z-z_j}{z-\bar z_j}\fp_n(z,\rho)
	$$
	is the Ahlfors polynomial with respect to $z_j\in\cZ_n(\rho)$ for the given set $E_J$.
\end{theorem}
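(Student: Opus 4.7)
The plan is to apply Lemma~\ref{4l22} to the pair of real polynomials $(\rho\Phi_n,\Psi_n)$. Without loss of generality take $\rho>0$ (the opposite sign just multiplies $\fp_n$ by $-1$ and does not affect the extremal property). The Herglotz hypothesis of Lemma~\ref{4l22} reduces, after clearing the positive scalar, to $-\Im(\Psi_n/\Phi_n)\ge 0$ on $\bbC_+$, which is a direct consequence of the interlacing of the real zeros of $\Phi_n$ and $\Psi_n$ recorded immediately after \eqref{aleq3}.

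The analytic core is the polynomial identity obtained by multiplying \eqref{aleq3} by its conjugate factorisation $e^{in\tilde\tau_n(z)}=\Psi_n(z)-R(z)\Phi_n(z)$, where $R^2(z):=\prod_{j=0}^g(z-u_j)(z-v_j)/[(z-a_j)(z-b_j)]$, namely
$$\Psi_n(z)^2-\prod_{j=0}^g\frac{(z-u_j)(z-v_j)}{(z-a_j)(z-b_j)}\Phi_n(z)^2\equiv 1.$$
Evaluated at $x\in E_J$ this rewrites as
$$\rho^2\Phi_n(x)^2+\Psi_n(x)^2=1+\left(\rho^2+\prod_{j=0}^g\frac{(x-u_j)(x-v_j)}{(x-a_j)(x-b_j)}\right)\Phi_n(x)^2.$$
By the very definition of $\tilde\rho_n^2$, the assumption $\rho^2<\tilde\rho_n^2$ keeps the parenthesised factor strictly negative throughout $E_J$; hence $\rho^2\Phi_n^2+\Psi_n^2\le 1$ on $E_J$ with equality exactly at the real zeros of $\Phi_n$ lying in $E_J$.

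To close hypothesis~(ii) of Lemma~\ref{4l22} one must also know that every real zero of $\Phi_n$ is located in $E_J$. From $R\Phi_n=-i\sin n\tilde\tau_n$, a real zero of $\Phi_n$ necessarily satisfies $\sin n\tilde\tau_n=0$, i.e.\ it is an alternation point $\tilde\tau_n^{-1}(k\pi/n)$. The $n$-regular extension produced by the comb construction of Section~3 is chosen so that any alternation point falling on an added band $[u_j,v_j]\subset\tilde E_{J,n}\setminus E_J$ can only land at its endpoints $u_j,v_j$, which are zeros of $R$ rather than of $\Phi_n$; consequently every real zero of $\Phi_n$ actually lies in $E_J$. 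Lemma~\ref{4l22} then yields immediately that $P_{n,z_j}(z)=\frac{z-z_j}{z-\bar z_j}\fp_n(z,\rho)$ is an Ahlfors polynomial at each $z_j\in\cZ_n(\rho)$. The main obstacle is precisely this zero-location step: ruling out real zeros of $\Phi_n$ in the added portion of $\tilde E_{J,n}$ is where the strict inequality $\rho^2<\tilde\rho_n^2$ (rather than $\rho\neq 0$) matters, because it is what guarantees that the added bands are narrow enough not to contain any interior alternation point.
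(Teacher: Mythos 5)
Your core argument is the same as the paper's: from \eqref{aleq3} one gets the identity $\Psi_n^2-\prod_{j=0}^g\frac{(z-u_j)(z-v_j)}{(z-a_j)(z-b_j)}\Phi_n^2\equiv 1$, hence $\rho^2\Phi_n(x)^2+\Psi_n(x)^2\le 1$ on $E_J$ once $\rho^2<\tilde\rho_n^2$, and then Lemma~\ref{4l22} applies; the paper's proof consists of exactly this display. Your additional verifications (the Herglotz condition from interlacing, equality exactly at the zeros of $\Phi_n$, and the observation that the added bands of the $n$-regular extension contain no interior alternation points, so that all real zeros of $\Phi_n$ lie in $E_J$) are legitimate and make explicit hypotheses of Lemma~\ref{4l22} that the paper leaves implicit.

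Two corrections, though. First, your closing sentence misattributes the role of the hypothesis: the condition $\rho^2<\tilde\rho_n^2$ has nothing to do with the width of the added bands or with where alternation points can sit --- those are fixed by the $n$-regular comb construction before $\rho$ is chosen. Its only role is the one you already used, namely to make the factor $\rho^2+\prod_{j=0}^g\frac{(x-u_j)(x-v_j)}{(x-a_j)(x-b_j)}$ negative throughout $E_J$, which gives the sup-norm bound and forces equality only at zeros of $\Phi_n$; the zero-location step is justified by the comb structure alone (your preceding sentence), not by the size of $\rho$. Second, replacing $\rho$ by $-\rho$ does not multiply $\fp_n$ by $-1$: one has $\fp_n(z,-\rho)=-\overline{\fp_n(\bar z,\rho)}$, i.e.\ the zeros are reflected through the real axis. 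The extremality claim survives by conjugation symmetry, but which sign of $\rho$ places $\cZ_n(\rho)$ in the upper half-plane (so that Lemma~\ref{4l22} applies verbatim, with the Herglotz condition for the pair $(\rho\Phi_n,\Psi_n)$) depends on the configuration; this is precisely the content of the Remark following Theorem~\ref{4th25}, which prescribes $\rho<0$ if $u_0>a_0$ and $\rho>0$ if $v_0<b_0$.
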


\begin{remark}
	To keep all zeros of $\fp_n(z,\rho)$ defined in \eqref{al4} in the lower half-plane one has to choose $\rho<0$ if $u_0>a_0$ and $\rho>0$ if $v_0<b_0$ (note that the leading coefficient of $\Phi_n(z)$ is positive, due to the standard choice of the square root at infinity in \eqref{aleq3}).
\end{remark}

\begin{proof}[Proof of Theorem \ref{4th25}] We have
	$$
	\Psi_n(x)^2+\rho^2\Phi_n(x)^2\le\Psi_n(x)^2-\prod_{j=0}^g\frac{(x-u_j)(x-v_j)}{(x-a_j)(x-b_j)}\Phi_n(x)^2= 1
	$$
	for $x\in E_J$. Therefore we can use Lemma \ref{4l22}.

\end{proof}

\begin{lemma}\label{lemma27}
	Assume that along a subsequence
	\begin{equation}\label{assl27}
		\lim_{k\to\infty}{u_m^{(n_k)}}=\lim_{k\to\infty}{v_m^{(n_k)}}= x_m\in(a_m,b_m),\quad m=0,\dots,g.
	\end{equation}
	Let $\fX=\{x_m\}_{m=0}^g$ and
	\begin{equation}\label{rhor}
		\tilde \rho^2=\tilde \rho^2(\fX)=-\sup_{x\in E_J}\frac{U_{\fX}^2(x)}{T(x)}>0,
	\end{equation}
	where
	$$
	T(z)=\prod_{j=0}^g(z-a_j)(z-b_j),\quad U_{\fX}(z)=\prod_{j=0}^g(z-x_j).
	$$
	Then for $\rho^2< \tilde \rho^2$, $\rho(b_0-x_0)>0$, and $z=z(\l)$ we have
	\begin{equation}\label{cwidom}
		\lim_{k\to\infty}e^{in_k\tau_J(z)}\fp_{n_k}(z,\rho)=\frac 1 2\prod_{j=0}^g b_{x_j}(\l)\left(\rho
		\frac{\sqrt{T(z)}}{U_{\fX}(z)}+i\right).
	\end{equation}
\end{lemma}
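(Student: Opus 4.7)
Starting from (4.6), $\Psi_n+S_n\Phi_n=e^{-in\tilde\tau_n(z)}$ with $S_n(z)=\sqrt{\prod_{j=0}^g(z-u_j^{(n)})(z-v_j^{(n)})/T(z)}$, and its companion identity $\Psi_n-S_n\Phi_n=e^{in\tilde\tau_n(z)}$ (equivalent to the Pell-type relation $\Psi_n^2-S_n^2\Phi_n^2=1$, itself just $\cos^2+\sin^2=1$ under $\Psi_n=\cos n\tilde\tau_n$ and $-iS_n\Phi_n=\sin n\tilde\tau_n$), solve for $\Psi_n$ and $S_n\Phi_n$. Substituting into $\fp_n(z,\rho)=\rho\Phi_n+i\Psi_n$ and multiplying by $e^{in\tau_J(z)}$ produces the key decomposition
\begin{equation*}
e^{in\tau_J(z)}\fp_n(z,\rho)=\frac{e^{in(\tau_J(z)-\tilde\tau_n(z))}}{2}\!\left(\frac{\rho}{S_n(z)}+i\right)+\frac{e^{in(\tau_J(z)+\tilde\tau_n(z))}}{2}\!\left(i-\frac{\rho}{S_n(z)}\right).
\end{equation*}

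For $\lambda\in\bbC_+$ and $z=z(\lambda)\in\cD$, both $\Im\tau_J(z)=G(z,\infty)>0$ and $\Im\tilde\tau_n(z)=\tilde G_n(z,\infty)>0$; hence the second summand is bounded by $e^{-n(G+\tilde G_n)}|{-\rho/S_n+i}|$ and vanishes along the subsequence (the hypothesis $\rho^2<\tilde\rho^2$ together with $u_j^{(n_k)},v_j^{(n_k)}\to x_j$ keeps $\rho/S_{n_k}$ bounded on compacta of $\cD\cap\bbC_+$). For the first summand, continuity in the parameters $u_j^{(n)}, v_j^{(n)}$ immediately gives $S_{n_k}(z)\to U_\fX(z)/\sqrt{T(z)}$, the branches being matched by $S_n\to 1$ and $\sqrt{T(z)}/z^{g+1}\to 1$ at infinity; consequently $\rho/S_{n_k}(z)\to\rho\sqrt{T(z)}/U_\fX(z)$.

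It remains to show $e^{in_k(\tau_J(z)-\tilde\tau_{n_k}(z))}\to\prod_{j=0}^gb_{x_j}(\lambda)$, the complex counterpart of \eqref{eq40}. Lifted via $z=z(\lambda)$ the function $\tau_J-\tilde\tau_n$ is holomorphic on $\bbC_+$; its imaginary part $G-\tilde G_n$ obeys the exact $J$-analog of \eqref{eq38}, which lifts to
\begin{equation*}
\tau_J(z(\lambda))-\tilde\tau_n(z(\lambda))=-\frac{i}{\pi}\int_{\tilde E_{J,n}\setminus E_J}\log b_x(\lambda)\,d\tilde\tau_n(x)+C_n,\quad C_n\in\bbR,
\end{equation*}
with $C_n$ fixed by the normalization \eqref{eq24}. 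The $J$-analog of \eqref{eq39}, $\int_{[u_j,v_j]}d\tilde\tau_n=\pi/n+o(1/n)$, together with $u_j^{(n_k)},v_j^{(n_k)}\to x_j$ and continuity of $b_x$ in $x$, forces $n_k(\tau_J-\tilde\tau_{n_k})(z)\to-i\sum_{j=0}^g\log b_{x_j}(\lambda)$, and exponentiation delivers the claim, once one checks $n_kC_{n_k}\to 0$ (by comparing expansions at a reference point, e.g.\ $\lambda=i\infty$).

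The principal obstacle is this last step. Its modulus part is exactly \eqref{eq40} transcribed to the $J$-setting and follows from the proof of Theorem \ref{thrmain} verbatim. However, pinning down the holomorphic limit --- i.e.\ ensuring that no spurious unimodular factor appears in front of $\prod_{j=0}^g b_{x_j}(\lambda)$ --- requires careful branch bookkeeping of $\log b_x(\lambda)$ across the added intervals $[u_j^{(n)},v_j^{(n)}]$ and the normalization verification $n_kC_{n_k}\to 0$; with these in place the assembly of the first summand reproduces the right-hand side of \eqref{cwidom}.
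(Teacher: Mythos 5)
Your proof is correct and follows essentially the same route as the paper: the identical decomposition of $e^{in\tau_J(z)}\fp_n(z,\rho)$ into the two exponential terms coming from \eqref{aleq3}, the vanishing of the $e^{in(\tau_J+\tilde\tau_n)}$ term, continuity of the square-root factor, and the key limit $e^{in_k(\tau_J-\tilde\tau_{J,n_k})}\to\prod_{j=0}^g b_{x_j}(\lambda)$, which the paper itself merely asserts as the counterpart of \eqref{eq40} with no more justification than you supply. The phase ambiguity you flag as the ``principal obstacle'' is not a real gap: modulus convergence (the $J$-analog of \eqref{eq38}--\eqref{eq40}) plus normal families give a subsequential limit $e^{i\theta}\prod_{j=0}^g b_{x_j}(\lambda)$, and $\theta=0$ is pinned down by evaluating at points of $i\bbR_+$ (the preimage of the real axis to the left of $\tilde E_{J,n}$), where $\tau_J-\tilde\tau_{J,n}$ is purely imaginary, so $e^{in(\tau_J-\tilde\tau_{J,n})}>0$, and each normalized Blaschke product $b_{x_j}$ is positive as well.
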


\begin{proof}
	We use a counterpart of \eqref{eq40}, $z=z(\lambda)$,
	\begin{equation}\label{4eq4}
		\lim_{n\to\infty}e^{in_k(\tau_J(z)-\tilde\tau_{J,n_k}(z))}=\prod_{j=0}^g b_{x_j}(\l).
	\end{equation}
	Therefore, due to the definition \eqref{aleq3},
	\begin{eqnarray*}
		\lim_{k\to\infty}e^{in_k\tau(z)} \fp_{n_k}(z,\rho)=
		\lim_{k\to\infty}\left( i \frac{e^{in_k(\tau_J(z)-\tau_{J,n_k}(z))}+e^{in_k(\tau_J(z)+\tau_{J,n_k}(z))}} 2\right.
		\\
		+\left.\rho \sqrt{\prod_{j=0}^g\frac{(z-a_j)(z-b_j)}{(z-u^{(n_k)}_j)(z-v^{(n_k)}_j)}}
		\frac{e^{in_k(\tau_J(z)-\tau_{J,n_k}(z))}-e^{in_k(\tau_J(z)+\tau_{J,n_k}(z))}} 2
		\right).
	\end{eqnarray*}
	By \eqref{assl27} and \eqref{4eq4} we get \eqref{cwidom}.
\end{proof}

Recall that any character is uniquely defined by  its values on a system of free generators, $\beta_j=\beta(\gamma_j)$, $j=1,\dots,g$, see Fig. \ref{fund}. Thus, $\beta=\prod_{j=0}^g\mu_{x_j}$ is equivalent to
\begin{equation}\label{xs}
	\sum_{j=0}^g\omega(x_j,E_k)=\beta_k
\end{equation}
where
$E_k= E_J^k$, see \eqref{212}, and $\omega(z,E_k)$ is the harmonic measure of $E_k$ in $\cD$ w.r.t. $z\in\cD$, and according to Proposition \ref{propgj1} the set $\cI$ and $\Gamma^*\simeq\bbR^g/\bbZ^g$ are homeomorphic.

\begin{theorem}\label{cth42} Assume that $\alpha_J$ is in a generic position, that is, $\clos\{\alpha_J^n\}_{n\in\bbZ}=\Gamma^*$. Then there is an open set $\cV=\cV_1\times\cV_2\subset\Gamma^*\times \bbC_+$ such that
	\begin{equation}\label{limita}
		\lim_{k\to\infty} e^{-n_kG(z_0,\infty)}A_{J,n_k}(z_0)=Y(z_0,\beta):=\frac 1{2\Im z_0}e^{-\sum_{j=0}^gG(x_j,z_0)},
	\end{equation}
	where   $\{\beta, z_0\}\in\cV$, $\beta=\lim_{k\to\infty}\alpha_{J}^{n_k}$,
	is related to $\{x_0,\dots,x_g,\rho\}$ by  \eqref{xs} and
	\begin{equation}\label{zzero}
		-i\rho=\frac{U_{\fX}(z_0)}{\sqrt {T(z_0)}}.
	\end{equation}
\end{theorem}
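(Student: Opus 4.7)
The plan is to combine the structural description of the Ahlfors polynomial from Theorem \ref{4th25} with the asymptotic formula in Lemma \ref{lemma27}, and then to identify the limiting parameters via the modified Abel--Jacobi inversion of Proposition \ref{propgj1}. The first step is to argue that, for $(\beta,z_0)$ in a suitable open subset and any sequence $n_k$ with $\alpha_J^{n_k}\to\beta$, the Ahlfors polynomial admits the representation
$$
P_{n_k,z_0}(z)=\frac{z-z_0}{z-\bar z_0}\,\fp_{n_k}(z,\rho^{(k)})
$$
from Theorem \ref{4th25}, where the $n_k$-regular extension $\tilde E_{J,n_k}$ and the real parameter $\rho^{(k)}$ are selected so that $\bar z_0\in\bbC_-$ is a zero of $\fp_{n_k}(\cdot,\rho^{(k)})$. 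Since $|(x-z_0)/(x-\bar z_0)|=1$ for every real $x$, the $E_J$-maximum of $P_{n_k,z_0}$ equals that of $\fp_{n_k}(\cdot,\rho^{(k)})$, whence
$$
A_{J,n_k}(z_0)=|P'_{n_k,z_0}(z_0)|=\frac{|\fp_{n_k}(z_0,\rho^{(k)})|}{2\,\Im z_0}.
$$

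Next I would extract, by compactness, a further subsequence so that $\rho^{(k)}\to\rho$ and $u_j^{(n_k)},v_j^{(n_k)}\to x_j\in[a_j,b_j]$ for every $j$. Reading off the character generated by the $n_k$-regular comb through the harmonic-measure formula \eqref{212} produces $\beta_k=\sum_{j=0}^g\omega(x_j,E_J^k)\mod 1$, that is \eqref{xs}. The condition that $\bar z_0$ is a zero of $\fp_{n_k}(\cdot,\rho^{(k)})$, combined with Lemma \ref{lemma27} applied in the lower half-plane (or equivalently by conjugation at $z_0$), forces the limiting factor $\rho\sqrt{T(z)}/U_{\fX}(z)+i$ to vanish at $z_0$, which is precisely \eqref{zzero}. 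By Proposition \ref{propgj1}, the datum $\beta$ determines $\fX$ uniquely, and hence $\rho$ is also unique via \eqref{zzero}, so the full sequence converges without any subsequence extraction.

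Applying Lemma \ref{lemma27} at $z=z_0\in\bbC_+$ then yields
$$
\lim_{k\to\infty}e^{-n_kG(z_0,\infty)}\,|\fp_{n_k}(z_0,\rho^{(k)})|=\frac{1}{2}\prod_{j=0}^g|b_{x_j}(\lambda_0)|\cdot\left|\rho\,\frac{\sqrt{T(z_0)}}{U_{\fX}(z_0)}+i\right|,
$$
using continuity in $\rho$ to replace the fixed parameter by $\rho^{(k)}\to\rho$. By \eqref{zzero} the quantity in the last factor equals $2i$, so its modulus is $2$, and the defining property $-\log|b_{x_j}(\lambda_0)|=G(x_j,z_0)$ of the complex Green function gives $\prod_j|b_{x_j}(\lambda_0)|=e^{-\sum_jG(x_j,z_0)}$. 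Combining with the $1/(2\,\Im z_0)$ factor from the first step recovers precisely $Y(z_0,\beta)$.

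The main obstacle will be the very first step: showing that, for $(\beta,z_0)$ in an open set $\cV=\cV_1\times\cV_2$, the genuine Ahlfors polynomial is indeed of the form furnished by Theorem \ref{4th25} for all sufficiently large $k$. This requires the strict inequality $\rho^2<\tilde\rho^2(\fX)$, cf. \eqref{rhor}, which is open in $(\fX,\rho)$, together with a non-degeneracy condition ensuring that the map $(\fX,\rho)\mapsto z_0$ implicitly defined by \eqref{zzero} is a local diffeomorphism onto $\cV_2$; combined with the homeomorphism $\beta\leftrightarrow\fX$ of Proposition \ref{propgj1}, these cut out the claimed open set $\cV$.
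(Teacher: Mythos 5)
Your overall computation of the limit (Lemma \ref{lemma27} at $z=z_0$, the factor $\rho\sqrt{T(z_0)}/U_{\fX}(z_0)+i=2i$ from \eqref{zzero}, the factor $1/(2\Im z_0)$ from differentiating $\frac{z-z_0}{z-\bar z_0}f(z)$ at $z_0$, and $-\log|b_{x_j}(\l_0)|=G(x_j,z_0)$) matches the paper. But your very first step is a genuine gap, not a technicality, and the openness remarks at the end do not close it. You assert that the \emph{genuine} Ahlfors polynomial $P_{n_k,z_0}$ at a complex point $z_0$ has the form $\frac{z-z_0}{z-\bar z_0}\fp_{n_k}(z,\rho^{(k)})$ coming from an $n_k$-regular comb extension. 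That would be a \emph{converse} of Theorem \ref{4th25}, and nothing in the paper (or in your argument) supplies it: Theorem \ref{4th25} rests on Lemma \ref{4l22}, which is only a \emph{sufficient} form of the Kolmogorov criterion, so it tells you that certain explicitly built polynomials are extremal at the conjugates of the zeros of $\fp_n(z,\rho)$ — it says nothing about the structure of the extremal polynomial at a prescribed $z_0$. The paper's proof runs in exactly the opposite direction: fix a regular $\beta$, solve \eqref{xs} for $\fX$ depending continuously on $x_0$ in an interval $I$ with $\inf_{x_0\in I}\tilde\rho^2(\fX)>0$, pick a small $\rho<0$, and \emph{define} $z_0$ by \eqref{zzero}; the open set $\cV_2$ is the image of $I\times(\tilde\rho_*,0)$ under this map, and the asymptotics follow from \eqref{cwidom} for polynomials whose extremality is already guaranteed by Theorem \ref{4th25}. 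So the theorem is proved for those $z_0$ which are swept out by the construction, precisely to avoid characterizing the extremal polynomial at an arbitrary complex point — the step your proposal takes for granted.

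A secondary error: you invoke Proposition \ref{propgj1} to conclude that $\beta$ determines $\fX$ uniquely and hence that the full sequence converges. That proposition concerns $g$ points $x_1,\dots,x_g$ solving $g$ equations; in \eqref{xs} you have $g+1$ points $x_0,\dots,x_g$ and only $g$ equations, so $\fX$ is not determined by $\beta$ alone, and even with the extra constraint \eqref{zzero} (i.e.\ Problem \ref{gaji}) the paper proves only local solvability and exhibits non-uniqueness already in the elliptic case. Accordingly, the theorem claims the limit \eqref{limita} only along the subsequence $n_k$ with $\alpha_J^{n_k}\to\beta$; your claim of convergence along the full sequence both overshoots the statement and relies on the incorrect uniqueness assertion.
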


\begin{proof}
	We follow to the line of the proof of Theorem \ref{thrmain}.
	According to Lemma \ref{lemma27}, we  fix a subsequence 
	$n_k$ such that $\alpha_J^{n_k}\to\beta\in\Gamma^*$.
	Comparing characters in  \eqref{cwidom}, we have \eqref{xs}. 
	We say that $x_0>a_0$ is regular for the given $\beta$ if for a solution of the system \eqref{xs}
	we have $x_j\in(a_j,b_j)$. If so, we can choose an interval $I$ around $x_0$ such that all points of this interval are regular ($x_j\in(a_j,b_j)$ depends continuously on $x_0$). Moreover
	$$
	\inf_{x_0\in I}\tilde\rho^2(\{x_j\}_{j=0}^g)>0.
	$$
	Going back to \eqref{cwidom}, for a sufficiently small $\rho^2:\tilde\rho_*<\rho<0$, we have a unique solution $z_0=z_0(x_0,\rho)$ of the equation \eqref{zzero},
	$$
	z_0\simeq x_0-i\rho\frac{\sqrt{T(x_0)}}{U_{\fX}'(x_0)}.
	$$
	
	To summarize: for an open set of characters $\beta\in\cV_1\subset\Gamma^*$  equations \eqref{xs} and \eqref{zzero} set  a one-to-one correspondence $z_0=z_0(x_0,\rho)$ on
	an open set  $\tilde \cV_2=I\times(\tilde\rho_*,0)$. $\cV_2\subset \bbC_+$ is defined as the image of $\tilde \cV_2$.
	
	Finally, we note that for an expression of the form
	$$
	g(z)=\frac{z-z_0}{z-\bar z_0} f(z), \quad f(\bar z_0)=0,
	$$
	we have
	$$
	|g'(z_0)|=\frac{|f(z_0)|}{2\Im z_0}.
	$$
	Thus, to get \eqref{limita}, we use \eqref{cwidom} and  
	$\overline{\frac{\sqrt{T(z_0)}}{U_{\fX}(z_0)}}=\frac{\sqrt{T(\bar z_0)}}{U_{\fX}(\bar z_0)}=-\frac i \rho$. 
\end{proof}

Let us represent \eqref{zzero} in terms of potential theory.

\begin{lemma}
	Let $\omega_{\bbC_+}(z, F)$ be the harmonic measure of $F\subset \bbR$ at $z\in\bbC_+$ in the upper half-plane. Then  \eqref{zzero} implies 
	\begin{equation}\label{hxs}
		\omega_{\bbC_+}(z_0,\cup_{j=0}^g[a_j,x_j])=\omega_{\bbC_+}(z_0,\cup_{j=0}^g[x_j,b_j]).
	\end{equation}
	Moreover, \eqref{hxs} means that $\frac{\sqrt{T(z_0)}}{U_{\fX}(z_0)}$ assumes a pure imaginary value.
\end{lemma}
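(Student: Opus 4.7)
The plan is to rewrite \eqref{hxs} as an equality involving sums of boundary arguments $\arg(z_0-t)$, and to recognise that the resulting algebraic condition on $T$ and $U_\fX$ is precisely the content of \eqref{zzero}.

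Fix $z_0\in\bbC_+$ and set $\theta(t):=\arg(z_0-t)\in(0,\pi)$. Then $\theta$ is continuous and strictly increasing on $\bbR$ with $\theta(-\infty)=0$, $\theta(+\infty)=\pi$, so the Poisson integral for the upper half-plane yields $\pi\omega_{\bbC_+}(z_0,[a,b])=\theta(b)-\theta(a)$ on ordinary intervals $a<b$. For a cyclic interval $[a,\infty)\cup(-\infty,b]$ (with $b<a$), the same recipe picks up an additional $+\pi$ coming from the sweep across infinity.

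Next I would plug these formulas into \eqref{hxs}. Under the ordering $b_0<a_1<b_1<\dots<b_g<a_0$ adapted to Problem \ref{pJ}, each gap $(a_j,b_j)$ with $j\ge 1$ is ordinary and contains $x_j$, while $x_0$ lies in the cyclic outer gap $[a_0,b_0]$; splitting the latter at $x_0$ forces exactly one of $[a_0,x_0]$, $[x_0,b_0]$ to wrap through infinity and carry the extra $+\pi$. A term-by-term computation then yields
\begin{equation*}
\pi\bigl[\omega_{\bbC_+}(z_0,\cup_{j=0}^g[a_j,x_j])-\omega_{\bbC_+}(z_0,\cup_{j=0}^g[x_j,b_j])\bigr]=\sum_{j=0}^{g}\bigl[2\theta(x_j)-\theta(a_j)-\theta(b_j)\bigr]\pm\pi,
\end{equation*}
the sign depending on the orientation of the outer split.

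Finally, since $\sum_{j=0}^g 2\theta(x_j)$ and $\sum_{j=0}^g[\theta(a_j)+\theta(b_j)]$ give determinations of $2\arg U_\fX(z_0)$ and $\arg T(z_0)$, vanishing of the above difference is equivalent to $2\arg U_\fX(z_0)-\arg T(z_0)\equiv\pi\pmod{2\pi}$, i.e.\ to $T(z_0)/U_\fX(z_0)^2$ being a negative real number. For the natural branch of $\sqrt T$ on $\bbC\setminus E_J$ (positive on $(a_0,+\infty)$), this means $\sqrt{T(z_0)}/U_\fX(z_0)\in i\bbR$, which is the second assertion of the lemma. Since \eqref{zzero} states that $U_\fX(z_0)/\sqrt{T(z_0)}=-i\rho\in i\bbR$, the implication \eqref{zzero}$\Rightarrow$\eqref{hxs} follows at once. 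The main bookkeeping obstacle is coordinating the two sign conventions---the wraparound $\pm\pi$ at the outer cyclic gap and the choice of branch of $\sqrt T$---so that the final congruence lands at $\pi$ rather than $0$ mod $2\pi$.
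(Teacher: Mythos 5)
Your argument is correct and is essentially the paper's own: both proofs identify $\arg\bigl(\sqrt{T(z_0)}/U_{\fX}(z_0)\bigr)$ with ($\pi/2$ times) the difference of the two harmonic measures, so that \eqref{zzero} (purely imaginary value) is exactly the balance condition \eqref{hxs} — the paper packages this via the exponential (outer) representation of $\sqrt{T}/U_{\fX}$ with Poisson integrals over $\cup[a_j,x_j]$ and $\cup[x_j,b_j]$, while you sum the arguments $\arg(z_0-t)$ of the linear factors directly, which is the same computation. The only step left implicit on your side is that the congruence mod $2\pi$ forces the exact equality, but this follows at once from your own formula since the difference of the two harmonic measures lies strictly between $-1$ and $1$.
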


\begin{proof}
	Due to the integral representation
	$$
	\frac{\sqrt{T(z)}}{U_\fX(z)}=i C e^{\frac 1 2\int_{\cup_{j=0}^g[a_j,x_j]}\frac {1+xz}{x-z}\frac{dx}{1+x^2}
		-\frac 1 2\int_{\cup_{j=0}^g[x_j,b_j]}\frac {1+xz}{x-z}\frac{dx}{1+x^2}}, \ C\in\bbR,
	$$
	the required condition $\arg \frac{\sqrt{T(z_0)}}{U_{\fX}(z_0)}=\pm \frac \pi 2$ (depending on sign of $C$) corresponds to
	$$
	\int_{\cup_{j=0}^g[a_j,x_j]}\frac {\Im z}{|x-z_0|^2}dx=\int_{\cup_{j=0}^g[x_j,b_j]}\frac {\Im z}{|x-z_0|^2}dx
	$$
	that is, to \eqref{hxs}.
\end{proof}

Theorem \ref{cth42} reduces   asymptotics in the complex  polynomial Ahlfors problem to the following generalized Abel-Jacobi inversion problem, compare subsection \ref{ss23}, particularly Proposition \ref{propgj1}.
\begin{problem}\label{gaji}
	For fixed $\beta\in\Gamma^*$ and $z\in\bbC_+$ solve the system
	\begin{eqnarray}
		\sum_{j=0}^g\omega(x_j,E_k)=\beta_k, &k=1,\dots,g,\label{cs1}\\
		\sum_{j=0}^g\left(\int_{a_j}^{x_j}+\int_{b_j}^{x_j}\right)\frac{dx}{|x-z|^2}=0,&\label{cs2}
	\end{eqnarray}
	where $x_j\in[a_j,b_j]$.
\end{problem}

\begin{remark}
	We substitute these values in \eqref{limita} to define, the generally speaking multivalued, function 
	$\Up(\l,\beta)=Y(z(\l),\beta)\left|\frac{dz}{d\l}\right|$
	responsible for the required asymptotics.
\end{remark}

\begin{proposition}
	Problem \ref{gaji} is locally solvable.
\end{proposition}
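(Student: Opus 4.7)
The plan is to apply the implicit function theorem to the smooth map
\[
F:\prod_{j=0}^g(a_j,b_j)\to(\bbR/\bbZ)^g\times\bbR,\qquad
F(\fX)=\bigl(\beta_1(\fX),\ldots,\beta_g(\fX),\Theta(\fX,z)\bigr),
\]
whose components are the left-hand sides of \eqref{cs1} and \eqref{cs2}, with $z\in\bbC_+$ entering $\Theta$ as a smooth parameter. Solving Problem \ref{gaji} at $(\beta,z)$ is exactly the task of inverting $F(\cdot)=(\beta,0)$ with $z$ held fixed, so local solvability reduces to checking non-degeneracy of the Jacobian at a convenient base point.

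A base point is supplied by Theorem \ref{cth42}. For any $\beta^*\in\clos\{\alpha_{J}^n\}_{n\in\bbZ}\subset\Gamma^*$ the Abel-Jacobi step there produces $\fX^*\in\prod_j(a_j,b_j)$ with $\beta_k(\fX^*)=\beta_k^*$, and for every sufficiently small $\rho^*<0$ the harmonic-measure lemma immediately preceding Problem \ref{gaji} identifies the relation \eqref{zzero} with the vanishing \eqref{cs2}. Hence $F(\fX^*)=(\beta^*,0)$ at $z=z_0^*:=z_0(x_0^*,\rho^*)$, and $(\beta^*,z_0^*)$ is dense in a non-empty open subset of $\Gamma^*\times\bbC_+$.

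The main step is to verify that the $(g{+}1)\times(g{+}1)$ Jacobian $J=dF(\fX^*)$ is non-singular. Its first $g$ rows are the differential of the extended Abel-Jacobi map with entries $\partial\beta_k/\partial x_j=2Q_k(x_j^*)/\sqrt{-T(x_j^*)}$ in the basis \eqref{l239}; since $\{Q_k\}_{k=1}^g$ is a basis of polynomials of degree $\le g-1$, every $g\times g$ minor of this block is non-zero for generic $\fX^*$, so their common kernel in $\bbR^{g+1}$ is a line $\bbR v$ with $v=(v_0,\ldots,v_g)$ and all $v_j\neq 0$; crucially, $v$ depends only on $\fX^*$ and not on $\rho^*$. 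The last row of $J$ equals $(2/|x_j^*-z_0^*|^2)_{j=0}^g$, and up to a non-zero factor one computes $\det J=\sum_{j=0}^g v_j/|x_j^*-z_0^*|^2$.

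The chief obstacle is to show this sum is non-zero, and this is where one exploits the freedom in the auxiliary parameter $\rho^*$. By the explicit expansion $z_0^*\simeq x_0^*-i\rho^*\sqrt{T(x_0^*)}/U_{\fX^*}'(x_0^*)$ from the proof of Theorem \ref{cth42}, as $\rho^*\to 0^-$ the point $z_0^*$ tends to $x_0^*$ along a real-analytic curve in $\bbC_+$, so the summand $v_0/|x_0^*-z_0^*|^2$ blows up while the other $g$ summands remain bounded; since $v_0\neq 0$, this forces $\sum_j v_j/|x_j^*-z_0^*|^2\neq 0$ for $|\rho^*|$ sufficiently small. The zero set of $\det J$ in the $\rho^*$-parameter therefore has empty interior, so after a small perturbation of $\rho^*$ we have $\det J(\fX^*)\neq 0$, and the implicit function theorem yields a smooth local inverse $(\beta,z)\mapsto\fX(\beta,z)$ on a neighbourhood of $(\beta^*,z_0^*)$ in $\Gamma^*\times\bbC_+$. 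This is precisely the asserted local solvability of Problem \ref{gaji}.
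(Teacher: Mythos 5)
Your overall strategy coincides with the paper's: set up the map whose components are the left-hand sides of \eqref{cs1}--\eqref{cs2} and show its $(g+1)\times(g+1)$ Jacobian is nonsingular, and your reduction $\det J\propto\sum_{j=0}^g v_j/|x_j-z|^2$ with $v$ spanning the common kernel of the first $g$ rows is sound. The genuine gap is in the nonvanishing step. Your blow-up argument only gives $\det J\neq 0$ when $z_0^*$ is close to $x_0^*$, i.e.\ for $|\rho^*|$ small, so you obtain invertibility only at base points lying near the real axis; but Problem \ref{gaji} is posed for an arbitrary $z\in\bbC_+$, and local solvability at a given pair $(\beta,z)$ requires the Jacobian to be nonzero at that very configuration. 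The closing move --- ``the zero set of $\det J$ in the $\rho^*$-parameter has empty interior, so after a small perturbation of $\rho^*$ we have $\det J\neq 0$'' --- does not repair this: perturbing $\rho^*$ changes the point $z_0^*$ at which you are solving, and invertibility at a perturbed base point gives no information at the original one, nor does a one-parameter density statement in $\rho^*$ cover the two-real-parameter family of $z\in\bbC_+$. Knowing only that all $v_j\neq 0$ is genuinely insufficient: a sum $\sum_j v_j/|x_j-z|^2$ with coefficients of mixed signs can vanish for suitable $z$, so some additional structural input about $v$ is unavoidable.

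The paper supplies exactly that input by computing the Jacobian in closed form. With $V_{\fX}(z)=\sum_{j=0}^g\frac{\sqrt{T(x_j)}}{U_{\fX}'(x_j)}\frac{U_{\fX}(z)}{z-x_j}$, the bordered Vandermonde determinant $\fm_{\fX}(z)$ with last row $\bigl(\sqrt{T(x_j)}/(z-x_j)\bigr)_{j}$ equals $\prod_{k<j}(x_k-x_j)\,V_{\fX}(z)/U_{\fX}(z)$, and the Jacobian of \eqref{cs1}--\eqref{cs2} is, up to nonzero row and column factors, $\fm_{\fX}(z)-\overline{\fm_{\fX}(z)}$, i.e.\ proportional to $\Im\bigl(V_{\fX}(z)/U_{\fX}(z)\bigr)$. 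Since the residues $\sqrt{T(x_j)}/U_{\fX}'(x_j)$ (with $\sqrt{T}$ the branch values on the gaps) all carry one sign, this imaginary part is nonzero for every $z\in\bbC_+$ and all pairwise distinct $x_j$ --- the global statement your argument is missing. In your notation this says precisely that the kernel vector $v$ has components of a common sign, so $\sum_j v_j/|x_j-z|^2$ is a sum of like-signed terms and never vanishes; identifying that sign, rather than resorting to the small-$\rho^*$ asymptotic regime, is the step your proposal stops short of, and without it the proposition is established only on a thin family of base points rather than in the generality the paper needs.
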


\begin{proof} We will check that the Jacobian of the system \eqref{cs1}-\eqref{cs2} does not vanish.
	To the polynomial $U_{\fX}(z)$ we associate the polynomial $V_{\fX}(z)$ of the form
	\begin{equation*}
		V_{\fX}(z)=\sum_{j=0}^g\frac{\sqrt{T(x_j)}}{U_{\fX}'(x_j)}\frac{U_{\fX}(z)}{z-x_j}.
	\end{equation*}
	By this definition 
	$$
	\fm_{\fX}(z):=\det \begin{bmatrix}
	1&\dots&1\\
	\vdots&\vdots&\vdots\\
	x_0^{g-1}&\dots&x_g^{g-1}\\
	\frac{\sqrt{T(x_0)}}{z-x_0}
	&\dots &
	\frac{\sqrt{T(x_g)}}{z-x_g}
	\end{bmatrix}=\prod_{0\le k<j\le g}(x_k-x_j)\frac{V_{\fX}(z)}{U_{\fX}(z)}.
	$$
	For this reason the Jacobian 
	\begin{equation*}
		\det\begin{bmatrix}
			1&\dots&1\\
			\vdots&\vdots&\vdots\\
			x_0^{g-1}&\dots&x_g^{g-1}\\
			\frac{\sqrt{T(x_0)}}{z-x_0}-\frac{\sqrt{T(x_0)}}{\bar z-x_0}
			&\dots &
			\frac{\sqrt{T(x_g)}}{z-x_g}-\frac{\sqrt{T(x_g)}}{\bar z-x_g}
		\end{bmatrix}=\fm_{\fX}(z)-\overline{\fm_{\fX}(z)}
	\end{equation*}
	does not vanish for all $z\in\bbC_+$ and $x_j\not=x_k$ for $j\not=k$.
\end{proof}

\begin{proposition}
	In the elliptic case, $g=1$,  Problem \ref{gaji} is globally solvable, but for some $\beta$ a solution is not unique.
\end{proposition}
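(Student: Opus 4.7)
My plan is to use monotonicity together with the intermediate value theorem on the level curve $C_\beta$ cut out by \eqref{cs1}. By the $g=1$ content of Proposition \ref{propgj1}, each map $x_j\mapsto\omega(x_j,E_1)$ is a monotone homeomorphism of the circle $I_j=[a_j,b_j]/(a_j\sim b_j)$ onto $\bbR/\bbZ$, and therefore the level set $C_\beta\subset\cI=I_0\times I_1$ of $(x_0,x_1)\mapsto\omega(x_0,E_1)+\omega(x_1,E_1)\pmod 1$ is a simple closed curve. I would parametrise $C_\beta$ by a time variable $t\in\bbR/\bbZ$ so that both $x_0(t)$ and $x_1(t)$ traverse their respective circles once monotonically in the same direction; along this parametrisation they are smooth except at two identification crossings, one per variable.

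I would next introduce
\[G(t):=F_0(x_0(t))+F_1(x_1(t)),\qquad F_j(x_j):=\int_{a_j}^{x_j}\frac{dx}{|x-z|^2}-\int_{x_j}^{b_j}\frac{dx}{|x-z|^2},\]
so that \eqref{cs2} reads $G(t)=0$. Since $F_j'(x_j)=2/|x_j-z|^2>0$ and both $x_0(t),x_1(t)$ are monotone, $G$ is strictly increasing on each of its two smooth arcs, while at each identification crossing it drops by $2J_j$, where $J_j:=\int_{a_j}^{b_j}dx/|x-z|^2>0$. Thus $G$ is a sawtooth on $\bbR/\bbZ$ with two upward ramps (total rise $2J_0+2J_1$) and two downward jumps of compensating total drop.

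For existence, it suffices to show that $G$ takes both positive and negative values on its continuous part; then the IVT applied on a sign-changing monotone arc produces a zero. I would compute the four one-sided limits of $G$ at the two jumps in terms of $F_j(x_j^*(\beta))$ with $x_j^*(\beta):=\omega_j^{-1}(\beta)$; the assumption ``$G>0$ everywhere on the smooth part'' would force simultaneously $F_1(x_1^*)>J_0$ and $F_0(x_0^*)>J_1$, which via the sharp bounds $|F_j|\le J_j$ produces the contradictory chain $J_0<J_1<J_0$. A symmetric argument excludes $G<0$ everywhere, so $G$ changes sign on a smooth arc and the IVT yields a solution.

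For non-uniqueness, strict monotonicity on each arc limits the number of zeros to at most one per arc, hence $1$ or $2$ solutions in total. Two solutions arise precisely when both arcs change sign, equivalently $|F_0(x_0^*(\beta))|<J_1$ and $|F_1(x_1^*(\beta))|<J_0$; these inequalities define a non-empty open set of $\beta$-values for any $z\in\bbC_+$ (realised for instance in a symmetric configuration near $\beta=1/2$), producing two distinct solutions. The main obstacle is the existence step: the downward jumps of $G$ could a priori swallow an otherwise present sign change, and the sharp bound $|F_j|\le J_j$ together with the contradiction above is exactly what prevents this.
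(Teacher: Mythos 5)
Your route is genuinely different from the paper's. The paper first disposes of \eqref{cs2}: it maps the domain conformally so that $z_0\mapsto 0$ in a two-arc picture, where harmonic measure at $0$ is normalized Lebesgue measure, so \eqref{hxs} becomes the linear constraint $x_0+x_1=c$; it then lets $x_1$ run through its gap and compares the endpoint values of $\omega(e^{ix_0},E_2)+\omega(e^{ix_1},E_2)$, so that the character map along this line covers $\bbR/\bbZ$ with an overlap interval, giving existence for every $\beta$ and non-uniqueness on that interval. You instead solve the character equation \eqref{cs1} first, parametrize its level curve in the torus, and analyze \eqref{cs2} as a sawtooth function $G$ along it. Both arguments are elementary; the paper's normalization buys a trivial (linear) form of \eqref{cs2}, while your version stays in the original coordinates and yields a sharper output: the number of solutions is $1$ or $2$, with the explicit criterion $|F_0(x_0^*(\beta))|<J_1$, $|F_1(x_1^*(\beta))|<J_0$ for the two-solution regime.

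Two places need tightening. (a) In the existence step, excluding ``$G>0$ everywhere'' and ``$G<0$ everywhere'' does not by itself produce a sign change on a single smooth arc: a priori one arc could be positive and the other negative with no zero. This mixed case is ruled out not by the bounds $|F_j|\le J_j$ but by the remark that both jumps are downward, so a passage from negative to positive values can only occur inside an arc, where the IVT applies; your closing sentence conflates these two mechanisms, and the degenerate case of coinciding wrap times (one arc, one jump of size $2J_0+2J_1$) should be mentioned, though it is trivial. (b) More substantively, the non-uniqueness half of the proposition concerns an arbitrary $g=1$ configuration and an arbitrary $z$ (the paper's conformal normalization covers exactly that), whereas you substantiate non-emptiness of your two-solution set of $\beta$'s only by a symmetric example. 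The general claim does follow quickly from your own bounds and should be written out: WLOG $J_0\le J_1$; then $|F_0(x_0^*(\beta))|\le J_0\le J_1$, with equality possible only when $x_0^*(\beta)$ is a band edge, so the first inequality holds for all but finitely many $\beta$; and since $F_1\circ\omega_1^{-1}$ is a continuous surjection onto $[-J_1,J_1]\ni 0$, the set of $\beta$ with $|F_1(x_1^*(\beta))|<J_0$ is open and non-empty. Finally, note that Proposition \ref{propgj1} covers only the gap $j=1$; the monotonicity of $\omega(\cdot,E_1)$ in the gap $j=0$ needs the $g=1$ fact that the differential $d\fw_1$ in \eqref{l239} has constant sign there (its numerator $Q_1$ is a nonzero constant), which is easy but not literally what you cited.
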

\begin{proof}
	We map $\cD$ to the complement of the system of two arcs $E_T$ such that  $z_0\mapsto 0$. Let
	$$
	E_{T}=\{\z=e^{i x}:\ x\in[0,2\pi)\setminus (a_0,b_0,)\cup(a_1,b_1)\},\ \
	0=a_0<b_0<a_1<b_1<2\pi.
	$$
	Since the harmonic measure $\omega_{\bbC_+}(z_0,F)$ corresponds to the Lebesgue measure on $\bbT$, \eqref{hxs} corresponds to
	\begin{equation}\label{29e1}
		x_0+x_1=\frac{b_0+a_1+b_1}{2}:=c.
	\end{equation}
	WLOG we assume that $b_0>b_1-a_1$. As soon as $x_1\in (a_1,b_1)$, by \eqref{29e1} $x_0$ runs in 
	the interval $(\xi_-,\xi_+)$, 
	$\xi_+= c-a_1$, $\xi_-=c-b_1$, and we have
	$$
	\omega\left(e^{i\xi_+},E_2\right)\le\omega(e^{ix_0},E_2)+\omega(e^{ix_1},E_2)\le 1+
	\omega\left(e^{i\xi_-},E_2\right),\quad E_2=E_T\setminus E_T^1.
	$$
	Since $\omega\left(e^{i\xi_+}, E_2\right)<\omega\left(e^{i\xi_-}, E_2\right)$, \eqref{xs} is solvable for all
	$\beta\in \bbR/\bbZ$, but in the range $(\omega\left(e^{i\xi_+}, E_2\right),\omega\left(e^{i\xi_-}, E_2\right))$ a solution is not unique.
\end{proof}

\begin{figure}[htbp] 
	\begin{center}
		\includegraphics[scale=0.3]
		{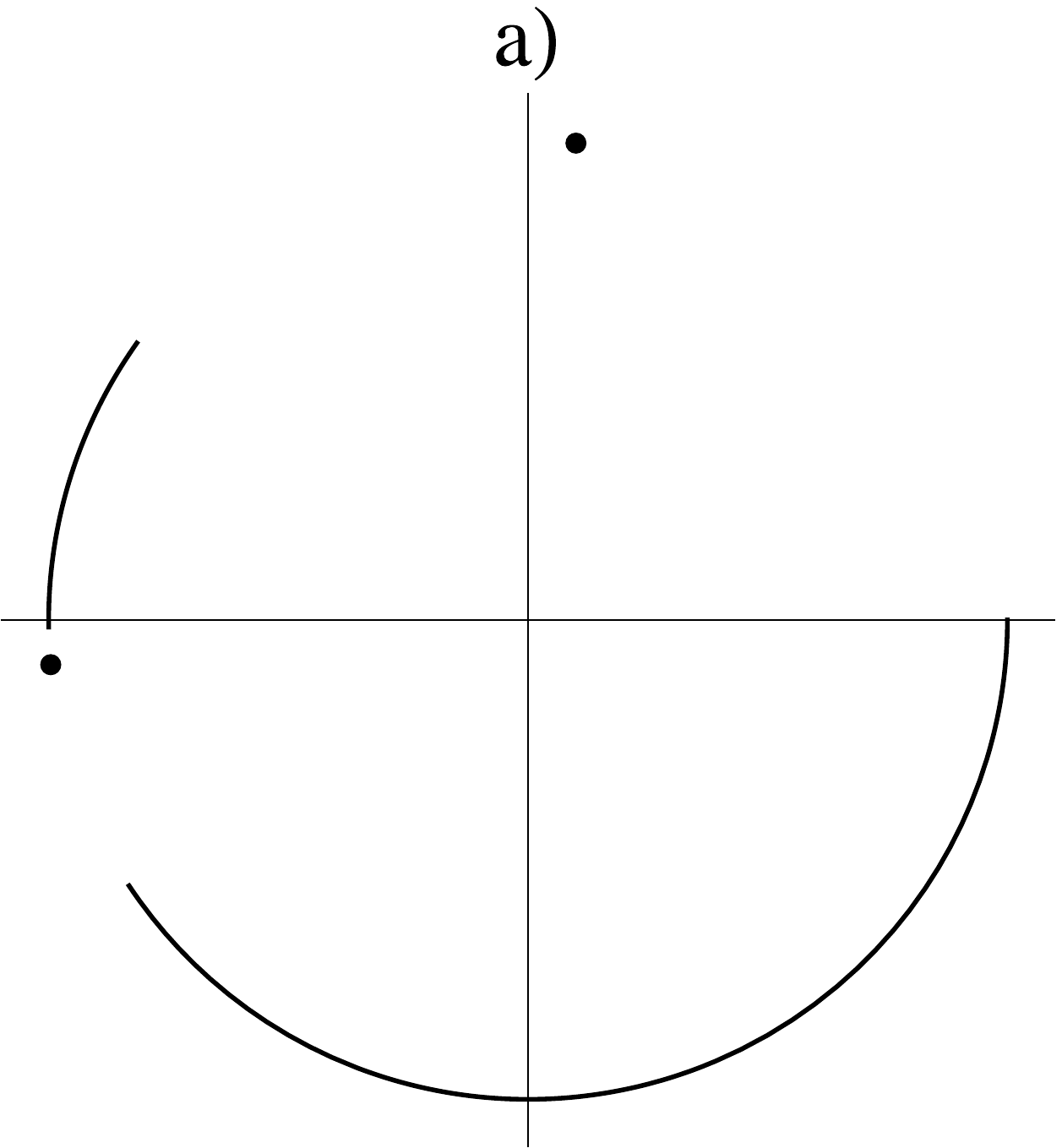}
		\includegraphics[scale=0.3]
		{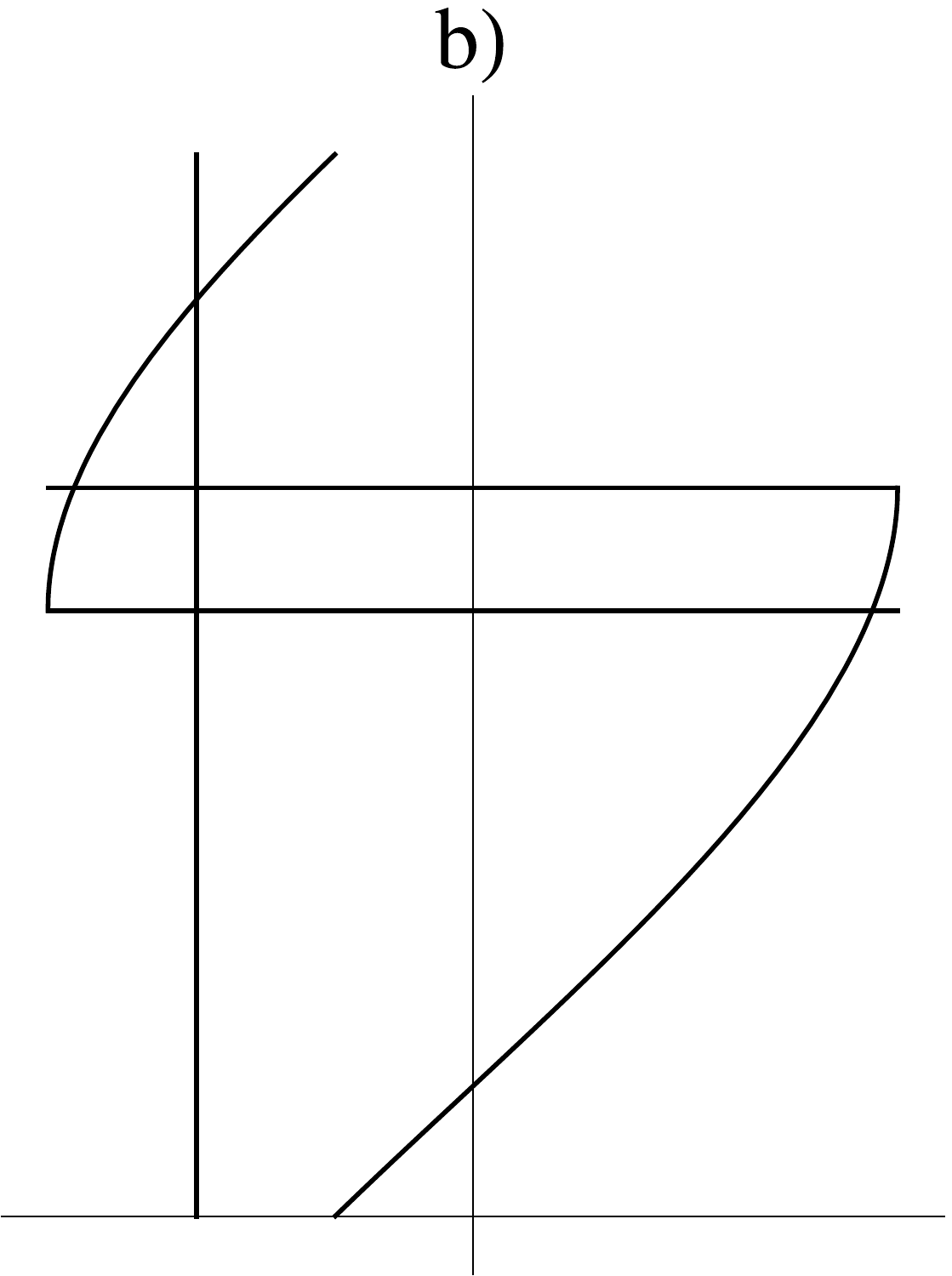}
		\includegraphics[scale=0.4]
		{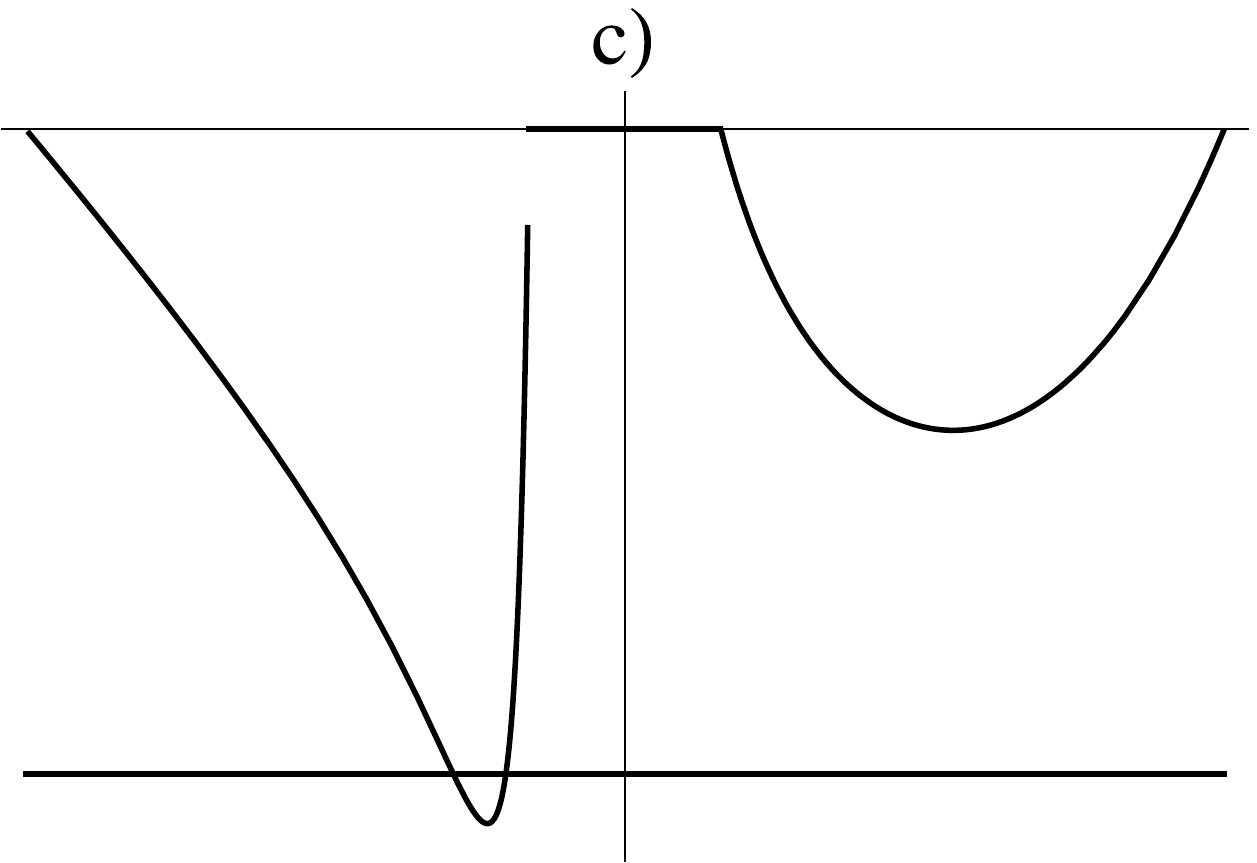}
		\caption{A modification of the shape of an extremal polynomial is required}
		\label{bif}
	\end{center}
\end{figure}

\begin{remark}
	Numerical experiments show that already in the elliptic case a certain bifurcation in the shape of an extremal polynomial may occur. In Fig. \ref{bif} we have the system of two arcs on $\bbT$,  see Plot (a).  Recall that $\zeta_0=0$ in this case and the values $e^{ix_0}$ and $e^{ix_1}$ are shown in the gaps. We vary  the parameter 
	$\beta$,
	$$
	\beta=\omega(e^{ i (c-x_1)},E_2)+\omega(e^{ix_1},E_2)\quad \mod 1,
	$$ 
	(the vertical axis in  Plot (b)) in the range, where a solution $x_1$ (on the horizontal axis) of Problem \ref{gaji} is unique. At a certain moment the value $\rho$ violates the condition $\rho^2\le\tilde\rho^2$, see \eqref{rhor} and Plot (c), where we graph the function $\frac{ {T(\z)}}{U^2_{\fX}(\z)}$, $\z\in E_T$ (the horizontal line corresponds to the level $-1/\rho^2$).
\end{remark}

\begin{remark}\label{30rem2}
	As it was demonstrated in the previous remark our function $Y(z,\beta)$  describes asymptotics \eqref{limita} for Problems \ref{pJ}, \ref{pT}, \ref{pS} only in a certain vicinity of the real axis.
	Note, however, that an analytic reproducing kernel is uniquely  defined by its diagonal
	\begin{equation}\label{30drk}
		k(\l_1,\l_2)=\sum\frac 1{n! m!}(\pd^n\bar\pd^m k)(\l_0,\l_0)(\l_1-\l_0)^n(\bar\l_2-\bar\l_0)^m.
	\end{equation}
	Therefore, in the case that  Conjecture \ref{mainconj} is correct, our Theorem \ref{cth42}, in fact, gives a complete information on the asymptotics due to the analytic continuation. Indeed, Theorem \ref{thrmain} provides asymptotics on the whole real axis in the $z$-plane, $\Up(\l,\beta)=\frac 1 2 \cA(\l,\beta)$, $z(\l)\in\bbR$.  To use \eqref{30drk} it is enough to have an extension of $\frac 1 2 \cA(\l,\beta)$ given by $\Up(\l,\beta)$ in an arbitrary small vicinity, $z(\l)\in\cV_2\subset \bbC_+$,
	\begin{equation*}\label{30drk2}
		k^{\alpha(\beta,\l_0)}(\l_1,\l_2)=\sum\frac 1{n! m!}(\pd^n\bar\pd^m \Up)(\l_0,\beta)(\l_1-\l_0)^n(\bar\l_2-\bar\l_0)^m,
		\quad z(\l_0)\in\bbR.
	\end{equation*}
	Let us recall, by the way, the condition \eqref{hstr} with respect to these partial derivatives.
\end{remark}

\begin{proposition} \label{pr43}
	For simply connected domains Problem \ref{gaji} is uniquely solvable. Moreover, 
	\begin{equation}\label{nnn}
		\Up(\l)=Y(z(\l))\left|\frac{dz}{d\lambda}\right|=\frac{2\sqrt{\l}\overline{\sqrt{\l}}}{(\lambda+\bar\lambda)(\sqrt{\lambda}+\overline{\sqrt{\lambda}})^2}.
	\end{equation}
\end{proposition}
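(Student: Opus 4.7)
The plan is to prove uniqueness and the explicit formula separately. In the simply connected case $g=0$, the harmonic-measure constraints \eqref{cs1} are empty, and Problem \ref{gaji} reduces to the single equation
$$\int_{a_0}^{x_0}\frac{dx}{|x-z|^2} = \int_{x_0}^{b_0}\frac{dx}{|x-z|^2},$$
i.e., $x_0$ is the harmonic midpoint (in $\bbC_+$) of the single outer gap $[a_0,b_0]$ at $z$. Unique solvability is immediate from monotonicity: as $x_0$ traverses $(a_0,b_0)$ the left-hand side is strictly increasing and the right-hand side strictly decreasing, with values interchanged at the endpoints; the IVT then yields exactly one crossing. For the canonical normalization $\cD=\bbC\setminus\bbR_+$ the outer gap is the unbounded ray $(-\infty,0)$, so the two integrals are read with $a_0=-\infty$, $b_0=0$.

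For the explicit formula, work in this canonical model with uniformization $z=-\lambda^2$, $\Re\lambda>0$. Writing $\lambda=\rho e^{i\phi}$ with $\phi\in(-\pi/2,0)$ (so that $\arg z=\pi+2\phi\in(0,\pi)$ and $z\in\bbC_+$), the midpoint condition, via $\omega_{\bbC_+}(z,(-\infty,x_0])=\arg(z-x_0)/\pi$ and $\omega_{\bbC_+}(z,[x_0,0])=(\arg z-\arg(z-x_0))/\pi$, rewrites as $\arg(z-x_0)=\arg(z)/2$. The ansatz $x_0=-|\lambda|^2=-\rho^2$ verifies it in one line: $z-x_0=\rho^2(1-e^{2i\phi})=-2i\rho^2\sin\phi\cdot e^{i\phi}$ has argument $\phi+\pi/2=\arg(z)/2$.

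With $x_0$ identified it remains to evaluate $Y(z)=\frac{1}{2\Im z}e^{-G(x_0,z)}$ and multiply by $|dz/d\lambda|=2\rho$. The Green function of $\cD$ is computed by lifting to $\bbC_+$ via $\mu=i\sqrt{-z}$, which sends $z\mapsto i\lambda$ and $x_0\mapsto i\rho$, giving
$$G(x_0,z)=\log\left|\frac{\rho+\bar\lambda}{\rho-\lambda}\right|=\log|\cot(\phi/2)|,$$
so that $e^{-G}=|\tan(\phi/2)|$. Plugging this together with $\Im z=-2\rho^2\sin\phi\cos\phi$ into $Y(z)|dz/d\lambda|$ and simplifying via $\sin\phi=2\sin(\phi/2)\cos(\phi/2)$ collapses everything to $1/(4\rho\cos\phi\cos^2(\phi/2))$. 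The identities $\lambda+\bar\lambda=2\rho\cos\phi$, $\sqrt\lambda\overline{\sqrt\lambda}=\rho$, and $(\sqrt\lambda+\overline{\sqrt\lambda})^2=4\rho\cos^2(\phi/2)$ then rewrite this as the stated closed form.

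The main obstacle is conceptual rather than computational: one must carefully reconcile the indexing of Problem \ref{gaji}, formulated with finite gaps $[a_j,b_j]$, with the $E_S=\bbR_+$ model, placing the single unknown $x_0$ in the unbounded complementary ray $(-\infty,0]$ rather than on $\bbR_+$ itself, and making sure that this is consistent with the $j=0$ term in the limiting construction of Theorem \ref{cth42}. Once this identification is settled, the computation is short and entirely explicit.
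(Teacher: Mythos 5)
Your proof is correct and follows essentially the paper's own route: you determine the unique $x_0=-|z_0|$ from the midpoint condition (the harmonic-measure form of \eqref{zzero}, equivalent by the lemma following Theorem \ref{cth42}; the paper instead solves the real part of $\frac{x_0-z_0}{\sqrt{-z_0}}=-i\rho$, which is linear in $x_0$), and then evaluate $Y(z)\left|\frac{dz}{d\lambda}\right|$ via the Green function of $\bbC\setminus\bbR_+$ lifted through $z=-\lambda^2$, exactly as in the paper's computation, only written in polar coordinates. Your monotonicity/IVT argument for unique solvability is a harmless elaboration of what the paper obtains directly from the explicit linear equation.
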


\begin{proof}[Proof of Proposition \ref{pr43} and Theorem \ref{thintro}]
	Let 
	$\cD=\bbC\setminus \bbR_+$. In this case
	$$
	\frac{U_{x_0}(z_0)}{\sqrt{T(z_0)}}=\frac{-z_0+x_0}{\sqrt{-z_0}}=-i\rho\Rightarrow \Re{\sqrt{-z_0}}+x_0\frac{\Re\sqrt{-z_0}}{|z_0|}=0,
	$$
	thus we get a unique $x_0=-|z_0|$. By  \eqref{limita}, we have 
	$$
	Y(z_0)=\frac{1}{|z_0-\bar z_0|}\left|\frac{\sqrt{-z_0}-\sqrt{|z_0|}}{\sqrt{-z_0}+\sqrt{|z_0|}}\right|.
	$$
	We consider the right half-plane as the universal covering. Let $z_0=-\lambda^2$. Then 
	$$
	Y(z_0)=\left|\frac {1}{(\lambda-\bar\lambda)(\lambda+\bar\lambda)}\frac{\lambda-|\lambda|}{\lambda+|\lambda|}
	\right|=\frac{1}{(\lambda+\bar\lambda)(\sqrt{\lambda}+\overline{\sqrt{\lambda}})^2}.
	$$
	Since $|z'(\l)|=2|\l|$, we get \eqref{nnn}. 
	
	Since this solution is global the argument in the proof of Theorem \ref{cth42}
	are applicable to all $z_0\in\bbC_+$ and we have \eqref{eq5}.
	
\end{proof}

 \bibliographystyle{amsplain}
 \bibliography{lit}

\providecommand{\MR}[1]{}
\providecommand{\bysame}{\leavevmode\hbox to3em{\hrulefill}\thinspace}
\providecommand{\MR}{\relax\ifhmode\unskip\space\fi MR }
\providecommand{\MRhref}[2]{%
  \href{http://www.ams.org/mathscinet-getitem?mr=#1}{#2}
}
\providecommand{\href}[2]{#2}
\begin{thebibliography}{10}

\bibitem{Ab79}
M.~B. Abrahamse, \emph{The {P}ick interpolation theorem for finitely connected
  domains}, Michigan Math. J. \textbf{26} (1979), no.~2, 195--203. \MR{532320}

\bibitem{AkhCongr}
N.~Akhiezer, \emph{{Orthogonal polynomials on a system of intervals and their
  continuous analogues}}, Proc. Fourth All-Union Math. Congr. (Leningrad,
  1961), Vol. 2, "Nauka", Leningrad (1964), 623--628.

\bibitem{Akh53}
\bysame, \emph{Vorlesungen \"uber {A}pproximationstheorie}, 1967.

\bibitem{Apt84}
A.~I. Aptekarev, \emph{Asymptotic properties of polynomials orthogonal on a
  system of contours, and periodic motions of {T}oda chains}, Mat. Sb. (N.S.)
  \textbf{125(167)} (1984), no.~2, 231--258. \MR{764479}

\bibitem{BV96}
J.~A. Ball and V.~Vinnikov, \emph{Zero-pole interpolation for meromorphic
  matrix functions on an algebraic curve and transfer functions of {$2$}{D}
  systems}, Acta Appl. Math. \textbf{45} (1996), no.~3, 239--316. \MR{1420479}

\bibitem{ChrSiZi15}
J.~Christiansen, B.~Simon, and M.~Zinchenko, \emph{{Asymptotics of Chebychev
  Polynomials, I. Subsets of $\mathbb{R}$ }}, Invent. Math. (online publishing)
  (2016), 1--29.

\bibitem{DY16}
D.~Damanik and P.~Yuditskii, \emph{Counterexamples to the {K}otani-{L}ast
  conjecture for continuum {S}chr\"odinger operators via character-automorphic
  {H}ardy spaces}, Adv. Math. \textbf{293} (2016), 738--781. \MR{3474334}

\bibitem{E16}
B.~Eichinger, \emph{{Szeg\H o-Widom asymptotics of Chebychev Polynomials on
  Circular Arcs}}, arXiv:1607.07305 (2016).

\bibitem{ErY11}
A.~Eremenko and P.~Yuditskii, \emph{Polynomials of the best uniform
  approximation to {${\rm sgn}(x)$} on two intervals}, J. Anal. Math.
  \textbf{114} (2011), 285--315. \MR{2837087}

\bibitem{ErY12}
\bysame, \emph{Comb functions}, Recent advances in orthogonal polynomials,
  special functions, and their applications, Contemp. Math., vol. 578, Amer.
  Math. Soc., Providence, RI, 2012, pp.~99--118. \MR{2964141}

\bibitem{Fay}
J.~D. Fay, \emph{Theta functions on {R}iemann surfaces}, Lecture Notes in
  Mathematics, Vol. 352, Springer-Verlag, Berlin-New York, 1973. \MR{0335789}

\bibitem{Gar49}
P.~R. Garabedian, \emph{Schwarz's lemma and the {S}zeg\"o kernel function},
  Trans. Amer. Math. Soc. \textbf{67} (1949), 1--35. \MR{0032747}

\bibitem{Gar07}
J.~B. Garnett, \emph{Bounded analytic functions}, first ed., Graduate Texts in
  Mathematics, vol. 236, Springer, New York, 2007. \MR{2261424}

\bibitem{KNT16}
S.~Kalmykov, B.~Nagy, and V.~Totik, \emph{{Bernstein- and Markov-type
  inequalities for rational functions}}, arXiv:1610.06706v1 (2016).

\bibitem{KY97}
S.~Kupin and P.~Yuditskii, \emph{Analogs of the {N}ehari and {S}arason theorems
  for character-automorphic functions and some related questions}, Topics in
  interpolation theory, Oper. Theory Adv. Appl., vol.~95, Birkh\" auser, 1997,
  pp.~373--390.

\bibitem{Lev}
B.~Ya. Levin, \emph{Lectures on entire functions}, Translations of Mathematical
  Monographs, vol. 150, American Mathematical Society, Providence, RI, 1996.
  \MR{1400006}

\bibitem{LL01}
E.~Levin and D.~S. Lubinsky, \emph{Orthogonal polynomials for exponential
  weights}, CMS Books in Mathematics/Ouvrages de Math\'ematiques de la SMC, 4,
  Springer-Verlag, New York, 2001. \MR{1840714}

\bibitem{L04}
A.~L. Lukashov, \emph{Inequalities for the derivatives of rational functions on
  several intervals}, Izv. Ross. Akad. Nauk Ser. Mat. \textbf{68} (2004),
  no.~3, 115--138. \MR{2069196}

\bibitem{MarOst75}
V.~A. Marchenko and I.~V. Ostrovskii, \emph{A characterization of the spectrum
  of hill's operator}, Mathematics of the USSR-Sbornik \textbf{26} (1975),
  no.~4, 493.

\bibitem{MTata1}
D.~Mumford, \emph{Tata lectures on theta. {I}}, Modern Birkh\"auser Classics,
  Birkh\"auser Boston, Inc., Boston, MA, 2007. \MR{2352717}

\bibitem{MTata2}
\bysame, \emph{Tata lectures on theta. {II}}, Modern Birkh\"auser Classics,
  Birkh\"auser Boston, Inc., Boston, MA, 2007. \MR{2307768}

\bibitem{Pom1}
C.~Pommerenke, \emph{{\"Uber die analytische Kapazit\"at}}, Archiv der
  Mathematik \textbf{11} (1960), no.~1, 270--277 (German).

\bibitem{Pom76}
\bysame, \emph{On the {G}reen's function of {F}uchsian groups}, Ann. Acad. Sci.
  Fenn. Ser. A I Math. \textbf{2} (1976), 409--427. \MR{0466534}

\bibitem{Si11}
B.~Simon, \emph{Szeg{\H o}'s theorem and its descendants}, M. B. Porter
  Lectures, Princeton University Press, Princeton, NJ, 2011, Spectral theory
  for $L{^{2}}$ perturbations of orthogonal polynomials. \MR{2743058}

\bibitem{Sim15}
\bysame, \emph{Basic complex analysis}, A Comprehensive Course in Analysis,
  Part 2A, American Mathematical Society, Providence, RI, 2015. \MR{3443339}

\bibitem{SY92}
M.~Sodin and P.~Yuditskii, \emph{Functions that deviate least from zero on
  closed subsets of the real axis}, Algebra i Analiz \textbf{4} (1992), no.~2,
  1--61. \MR{1182392}

\bibitem{SY97}
\bysame, \emph{Almost periodic {J}acobi matrices with homogeneous spectrum,
  infinite-dimensional {J}acobi inversion, and {H}ardy spaces of
  character-automorphic functions}, J. Geom. Anal. \textbf{7} (1997), no.~3,
  387--435. \MR{1674798}

\bibitem{Su66}
P.~K. Suetin, \emph{Fundamental properties of polynomials orthogonal on a
  contour}, Uspehi Mat. Nauk \textbf{21} (1966), no.~2 (128), 41--88.
  \MR{0198111}

\bibitem{Tot12}
V.~Totik, \emph{Chebyshev polynomials on a system of curves}, J. Anal. Math.
  \textbf{118} (2012), no.~1, 317--338. \MR{2993030}

\bibitem{VY14}
A.~Volberg and P.~Yuditskii, \emph{Kotani-{L}ast problem and {H}ardy spaces on
  surfaces of {W}idom type}, Invent. Math. \textbf{197} (2014), no.~3,
  683--740. \MR{3251833}

\bibitem{Wid69}
H.~Widom, \emph{Extremal polynomials associated with a system of curves in the
  complex plane}, Advances in Math. \textbf{3} (1969), 127--232. \MR{0239059}

\bibitem{Yu01}
P.~Yuditskii, \emph{A special case of de {B}ranges' theorem on the inverse
  monodromy problem}, Integral Equations Operator Theory \textbf{39} (2001),
  no.~2, 229--252. \MR{1817660}

\bibitem{Y15}
\bysame, \emph{{Killip-Simon problem and Jacobi flow on GMP matrices }},
  Preprint, Isaac Newton Institute for Mathematical Sciences, ni15005 (2015),
  1--50.

\bibitem{ZY16}
A.~Zeyuan and Yuanzhi L., \emph{{Faster principal component regression via
  optimal polynomial approximation to sgn($x$)}}, arXiv:1608.04773 (2016),
  1--26.

\end{thebibliography}

\end{document}